\documentclass[graybox, envcountsame, envcountsect]{svmult-fgs}


\usepackage{newtxtext,newtxmath}  
\usepackage{helvet}               
\usepackage{courier}              

\usepackage{graphicx}        
                             
\usepackage{overpic}
\usepackage[bottom]{footmisc}
\spnewtheorem*{bla}{Question}{\bf}{}
\spnewtheorem*{proofof}{\nocaption}{}{}

\newcommand{\mb}{\mathbb}
\newcommand{\mc}{\mathcal}
\newcommand{\be}{e}

\newcommand{\N}{\mathbb{N}}
\newcommand{\T}{\mathcal{T}}
\newcommand{\Te}{\mathbb{T}}
\newcommand{\C}{\mathbb{C}}
\newcommand{\R}{\mathbb{R}}
\newcommand{\e}{\epsilon}
\newcommand{\eps}{\epsilon}
\newcommand{\ga}{\gamma}

\newcommand{\sub}{\subseteq}
\newcommand{\ra}{\rightarrow}
\newcommand{\diam}{\operatorname{diam}}
\newcommand{\dist}{\operatorname{dist}}
\newcommand{\length}{\operatorname{length}}
\newcommand{\sgn}{\operatorname{sgn}}

\newcommand{\ba}{\backslash}

\renewcommand{\:}{\colon}
\newcommand{\arb}{arbitrary}
\newcommand{\arbly}{arbitrarily}

\makeindex             

\begin{document}

\title*{The continuum self-similar tree}
\author{Mario Bonk and Huy Tran}
\institute{Mario Bonk \at Department of Mathematics,
University of California, Los Angeles, CA 90095, USA, \email{mbonk@math.ucla.edu}
\and Huy Tran \at Institut f\"ur Mathematik, Technische Universit\"at Berlin,
Sekr.\ MA 7-1,
Strasse des 17.\ Juni 136,
10623 Berlin, Germany, \email{tran@math.tu-berlin.de}}

%
%
\maketitle

\abstract{We introduce the continuum self-similar tree (CSST) as the attractor of an iterated function system in the complex plane. We provide a topological characterization of the CSST and use this to relate the CSST to other metric trees such as the  continuum random tree (CRT) and Julia sets of postcritically-finite polynomials. 
\keywords{Metric tree, iterated function system,  continuum random tree, Julia set.} \\
\subclassname{Primary: 37C70; Secondary: 37B45.}
}

\section{Introduction}
\label{sec:1}
In this expository paper, we study the topological properties of a 
certain subset $\Te$ of the complex plane $\C$. It  is defined as the attractor of an iterated function system. As we will see, $\Te$ has a self-similar ``tree-like" structure with very regular branching behavior. In a sense it is the simplest object of this type. Sets homeomorphic to $\Te$   appear in various other contexts. Accordingly, we give the set $\Te$ a special name, and call it the {\em continuum self-similar tree} (CSST). 

To give the precise definition of $\Te$ we consider the following contracting  homeomorphisms  on 
$\C$: 
\begin{equation}\label{maps}
f_1(z) = \tfrac{1}{2}z-\tfrac{1}{2}, \quad  f_2(z) =  \tfrac{1}{2} \bar{z} + \tfrac{1}{2},  \quad f_3(z) =  \tfrac{i}{2} \bar{z} + \tfrac{i}{2}.
\end{equation}
Then the following statement is true.

\begin{proposition} \label{CSSTex}There exists a  unique 
non-empty compact set $\mb{T}\sub\C$  satisfying 
\begin{equation}\label{Tinv}
\mb{T}=f_1(\mb{T})\cup f_2(\mb{T})\cup f_3(\mb{T}).
\end{equation}
\end{proposition}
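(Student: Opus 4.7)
The plan is to recognize this as an instance of Hutchinson's theorem on the attractor of an iterated function system, which follows from the Banach fixed point theorem applied to the Hutchinson operator on the space of non-empty compact sets with the Hausdorff metric.

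First, I would let $\mathcal{K}(\C)$ denote the collection of non-empty compact subsets of $\C$ and equip it with the Hausdorff distance
\[
d_H(A,B)=\max\Bigl\{\sup_{a\in A}\dist(a,B),\ \sup_{b\in B}\dist(b,A)\Bigr\}.
\]
A standard fact, which I would quote (or prove in a short lemma), is that $(\mathcal{K}(\C),d_H)$ is a complete metric space. Next, I would define the Hutchinson operator
\[
F\:\mathcal{K}(\C)\to\mathcal{K}(\C),\quad F(K)=f_1(K)\cup f_2(K)\cup f_3(K),
\]
noting that $F(K)$ is a finite union of continuous images of a compact set, hence compact and non-empty.

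The second step is the contraction estimate. A direct computation from \eqref{maps} shows that each $f_k$ is a similarity of $\C$ with ratio $\tfrac12$, so $|f_k(z)-f_k(w)|=\tfrac12|z-w|$ for all $z,w\in\C$ and $k\in\{1,2,3\}$. From this I would deduce, using the elementary inequality
\[
d_H\!\left(\bigcup_{k}A_k,\bigcup_{k}B_k\right)\le\max_{k}d_H(A_k,B_k),
\]
that
\[
d_H(F(A),F(B))\le\max_{k}d_H(f_k(A),f_k(B))\le\tfrac12\, d_H(A,B)
\]
for all $A,B\in\mathcal{K}(\C)$. Thus $F$ is a strict contraction on the complete metric space $(\mathcal{K}(\C),d_H)$.

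Finally, I would apply the Banach fixed point theorem to $F$ to obtain a unique $\Te\in\mathcal{K}(\C)$ with $F(\Te)=\Te$, which is exactly the desired identity \eqref{Tinv}. The main technical obstacle is the completeness of the Hausdorff metric and the inequality above for unions; both are classical, so I would either cite a standard reference (e.g.\ Hutchinson's original paper or a text on fractal geometry) or relegate them to a short lemma. No genuine difficulty arises from the specific choice of the three maps $f_1,f_2,f_3$; the work lies entirely in the abstract IFS machinery.
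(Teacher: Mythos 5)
Your proposal is correct, and it is indeed the classical proof of Hutchinson's theorem, but it is not the route the paper takes. The paper instead constructs a coding map $\pi\:W\to\C$ from the symbolic space $W=\{1,2,3\}^{\N}$ (Lemma~\ref{piex}), defined by $\pi(w)=\lim_{n\to\infty}f_{w_1\cdots w_n}(z_0)$, shows this limit exists and is independent of $z_0$ via a geometric series estimate, and then sets $\Te=\pi(W)$. Non-emptiness and compactness follow because $W$ is compact and $\pi$ is continuous; the invariance~\eqref{Tinv} is read off from the functional relation $\pi(w_1w_2\ldots)=f_{w_1}(\pi(w_2w_3\ldots))$; and uniqueness is proved by a two-sided inclusion argument: one shows $\Te\sub\widetilde\Te$ by running the definition of $\pi$ with a base point $z_0\in\widetilde\Te$, and $\widetilde\Te\sub\Te$ by extracting from any $a\in\widetilde\Te$ an infinite word $w$ with $a\in f_{w_1\cdots w_n}(\widetilde\Te)$ for all $n$ and noting that $\diam(f_{w_1\cdots w_n}(\widetilde\Te))\to 0$, so $a=\pi(w)$. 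Your Banach--fixed--point argument on $(\mathcal{K}(\C),d_H)$ is shorter and more self-contained as far as this one proposition goes, and it neatly isolates the role of the contraction ratio $\tfrac12$. The paper's approach is a bit more work up front but pays off later: the map $\pi$ and the word representations of points in $\Te$ are used repeatedly throughout Section~\ref{s:CSST} (e.g.\ in Lemma~\ref{l:0_triple}, the identification of branch points as the $f_u(0)$, and the proofs of arc-connectedness and quasiconvexity), so building $\Te$ directly as $\pi(W)$ avoids having to re-derive that coding afterwards.
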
 

Based on this fact, we make the following definition. 

\begin{definition}\label{def:CSST}
The continuum self-similar tree (CSST) is the set $\mb{T}\sub \C$ 
  as given by Proposition~\ref{CSSTex}.\end{definition}

  In other words, $\mb{T}$ is the attractor of the iterated function system $\{f_1,f_2,f_3\}$ in the plane.
 Proposition~\ref{CSSTex} is a special case of 
 well-known more general results in the literature (see 
\cite{Hu81}, \cite[Theorem 9.1]{Fa03}, or 
 \cite[Theorem~1.1.4]{Kigami}, for example). We will recall the argument  that leads to Proposition~\ref{CSSTex} in Section~\ref{sec:toptree}. 
 
 Spaces of a similar topological type as $\Te$  have appeared in the literature before (among the more recent examples is  the {\em antenna set} in \cite{BT01} or  
{\em Hata's tree-like set} considered in \cite[Example~1.2.9] {Kigami}).  For a representation of $\Te$ see Figure~\ref{fig:CSST}.

\begin{figure}
\vspace{-0.7cm}
 \begin{overpic}[ scale=0.7
    ]{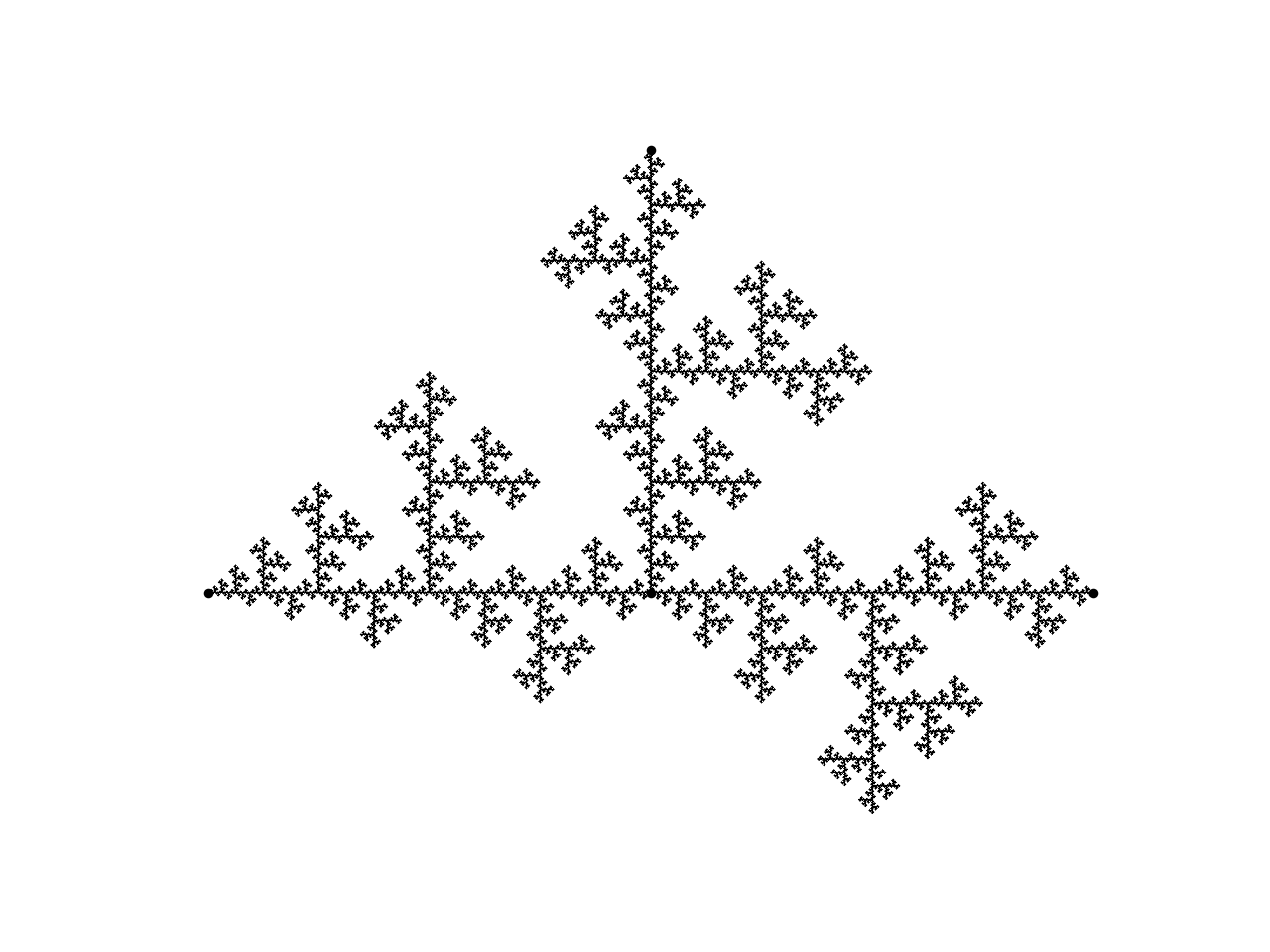}
    \put(52,63){ $i$}
    \put(14,25){ $-1$}
     \put(85,25){ $1$}
       \put(50,25){ $0$}
           \end{overpic}
  \vspace{-1cm}
\caption{The continuum self-similar tree $\Te$.}
\label{fig:CSST}
\end{figure}

To describe the topological properties of $\Te$, we introduce the  following concept.

\begin{definition}\label{def:tree} 
A {\em (metric)  tree} is a compact, connected, and locally connected  metric space $(T,d)$ containing at least two points such that for all  $a,b\in T$
with  $a\ne b$  there exists a unique arc $\alpha\sub T$ with endpoints $a$ and $b$.\end{definition}

In other words, any two distinct points $a$ and $b$ in a metric tree  can be joined by a  unique  arc $\alpha$ in $T$. It is convenient to allow $a=b$ here in which case $\alpha=\{a\}=\{b\}$ and we   consider $\alpha$ as  a {\em degenerate} arc.

In the following, we will usually call 
a metric space 
as in Definition~\ref{def:tree} a {\em tree} and drop the word ``metric" for simplicity.  It is easy to see that the concept of a tree 
 is essentially the same as   the concept of a {\em dendrite} 
that  appears in the literature (see, for example, \cite[Chapter V]{Wh}, \cite[Section \S 51 VI]{Ku68},  \cite[Chapter~X]{Na}).  More precisely, a metric space $T$ is a tree if and only if it is a non-degenerate dendrite (the simple proof is recorded in \cite[Proposition 2.2]{BM19a}).  If one drops the compactness assumption  in Definition~\ref{def:tree}, but requires in addition   
that the space is geodesic (see below for the definition), then one is 
led to the notion of a {\em real tree}. They  appear in many areas of mathematics (see \cite{LeGall, Best}, for example). 

 The following statement
 is  suggested by Figure~\ref{fig:CSST}.
 \begin{proposition}\label{prop:CSST} 
  The continuum self-similar tree   $\Te$ is a metric tree. 
  \end{proposition}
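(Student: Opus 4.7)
My plan is to verify that $\Te$ is a non-degenerate dendrite — compact, connected, locally connected, with at least two points, and containing no Jordan curve — and then invoke the equivalence mentioned below Definition~\ref{def:tree} to conclude that $\Te$ is a metric tree. Non-degeneracy is immediate: each $f_i$ is a contraction and \eqref{Tinv} forces its unique fixed point into $\Te$, so the distinct points $-1$ (fixed by $f_1$) and $1$ (fixed by $f_2$) both lie in $\Te$. For connectedness I take the tripod $\gamma_0 := [-1,0] \cup [0,1] \cup [0,i]$ and set $F(A) := f_1(A) \cup f_2(A) \cup f_3(A)$; a direct computation shows $\gamma_0 \subseteq F(\gamma_0)$, so the iterates $F^n(\gamma_0)$ form an increasing sequence with Hausdorff limit $\Te$. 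Since $\{-1, 1\} \subseteq \gamma_0 \subseteq F^n(\gamma_0)$ and $f_1(1) = f_2(-1) = f_3(-1) = 0$, the three pieces $f_i(F^n(\gamma_0))$ are each connected and share the point $0$, so $F^{n+1}(\gamma_0)$ is connected. Hence $\Te$, the closure of an increasing union of connected sets, is connected.

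For local connectedness I use the ``small pieces'' $\Te_w := (f_{w_1} \circ \cdots \circ f_{w_n})(\Te)$ indexed by words $w \in \{1,2,3\}^n$. Each $\Te_w$ is a continuous image of the connected set $\Te$, hence connected, and $\diam(\Te_w) \le 2^{-n}\diam(\Te)$. Given $p \in \Te$ and $\e > 0$, I choose $n$ with $2^{1-n}\diam(\Te) < \e$ and let $U$ be the union of those $\Te_w$, $|w| = n$, that contain $p$. Its complement in $\Te$ is a finite union of closed pieces none of which contains $p$, so $U$ is a neighborhood of $p$; it is connected because each of its pieces meets $p$, and $\diam(U) < \e$.

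The main work — and the obstacle I anticipate — is ruling out Jordan curves. My argument hinges on the overlap lemma
\begin{equation}\label{overlap-plan}
f_i(\Te) \cap f_j(\Te) = \{0\} \qquad (i \ne j),
\end{equation}
which I would establish by combining the separating inclusions $f_1(\Te) \subseteq \{\Re z \le 0\}$ and $f_2(\Te) \subseteq \{\Re z \ge 0\}$ (and analogous coordinate bounds for $f_3(\Te)$) with the identifications $\Te \cap \R = [-1,1]$ and $\Te \cap \{\Re z = \pm 1\} = \{\pm 1\}$, the latter following from self-similarity and symbolic-coding arguments; the orientation reversals built into $f_2$ and $f_3$ make this bookkeeping a touch delicate. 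Granted \eqref{overlap-plan}, suppose for contradiction that $J \subseteq \Te$ is a Jordan curve. Then $J \setminus \{0\}$ is an arc, hence connected, so by \eqref{overlap-plan} it must lie in a single $f_i(\Te) \setminus \{0\}$; consequently $J \subseteq f_i(\Te)$, and $f_i^{-1}(J) \subseteq \Te$ is a Jordan curve of twice the diameter. Iterating produces Jordan curves in $\Te$ of unbounded diameter, contradicting $\diam(\Te) < \infty$.
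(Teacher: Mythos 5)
Your proposal takes a genuinely different route from the paper's. The paper establishes the defining properties of Definition~\ref{def:tree} directly: it constructs explicit arcs from $-1$ to any $p\in\Te$ (Lemma~\ref{lem:arc1}), checks local connectedness, and then proves \emph{uniqueness} of arcs by showing (via the symbolic-coding Lemmas~\ref{l:0_triple} and~\ref{lem:trip}) that any two points are separated by a point of the form $f_u(0)$, so every connecting arc must pass through it. You instead verify the defining properties of a non-degenerate dendrite (compact, connected, locally connected, at least two points, no Jordan curve) and invoke the equivalence stated after Definition~\ref{def:tree}. Your Jordan-curve elimination --- a connected set in $\Te\setminus\{0\}$ must fall entirely into one of the three relatively closed, pairwise disjoint pieces $f_i(\Te)\setminus\{0\}$, so $f_i^{-1}$ doubles the diameter of any Jordan curve, eventually exceeding $\diam(\Te)$ --- is a clean, self-contained scaling argument and is genuinely lighter on machinery than the paper's uniqueness step. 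Your connectedness argument via the iterated tripod $F^n(\gamma_0)$ and your local-connectedness argument via the pieces $\Te_w$ both check out and are close in spirit to what the paper does. What the paper's approach buys instead is explicit arcs and explicit complementary-component structure, which are reused later (Lemmas~\ref{lem:trip}, \ref{p:decomp}, \ref{lem:branchCSST}, and the quasi-convexity proof).

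There is, however, a gap in your proposed route to the overlap identity $f_i(\Te)\cap f_j(\Te)=\{0\}$, which you rightly flag as the main obstacle. The axis-aligned separating inclusions work for the pair $(1,2)$ because $f_1(\Te)\subseteq\{\Re z\le 0\}$ and $f_2(\Te)\subseteq\{\Re z\ge 0\}$, and $\Te\cap\{\Re z=\pm1\}=\{\pm1\}$ pins down the intersection. But no coordinate half-plane separates $f_1(\Te)$ from $f_3(\Te)$ (or $f_2(\Te)$ from $f_3(\Te)$): one has $f_3(\Te)\subseteq\{\Im z\ge 0\}$, yet $f_1(\Te)$ and $f_2(\Te)$ both contain points with $\Im z>0$, so the analogous bound does not confine the overlap to a line through $0$. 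The paper's fix is to note $\Te\subseteq H$, where $H$ is the convex hull of $1,i,-1,\tfrac12-\tfrac i2$; then $f_k(\Te)\subseteq H_k:=f_k(H)$, and the three convex sets $H_1,H_2,H_3$ meet pairwise exactly in $\{0\}$ by an elementary planar computation (this is the paper's \eqref{e:TjTk0}). Substituting that single containment for the coordinate bookkeeping closes the gap, and the rest of your argument then goes through as written.
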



  If $T$ is a tree,  then for $x\in T$ we  denote by  $\nu_T(x)\in \N\cup\{\infty\}$   the number of (connected) components of $T\backslash \{x\}$. This number $\nu_T(x)$ is called the {\em valence}  of $x$. If  $\nu_T(x) = 1$, then $x$ is called a {\em leaf} of $T$. If $\nu_T(x)\geq 3$, then $x$ is a {\em branch point} of $T$. If  $\nu_T(x)=3$, then we also call $x$  a {\em triple point}.

The following statement is again suggested by Figure~\ref{fig:CSST}.
\begin{proposition}\label{prop:T123}
Each branch point of the tree $\Te$ is a triple point, and these triple points are dense in $\Te$. 
\end{proposition}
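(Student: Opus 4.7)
I will split Proposition~\ref{prop:T123} into two assertions: density of the triple points, and the upper bound $\nu_\Te(x) \leq 3$ at every branch point.

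\textbf{Density.} First I would show that $0$ is a triple point of $\Te$. From the formulas one reads $f_1(1) = f_2(-1) = f_3(-1) = 0$, so $0$ lies in each $f_i(\Te)$ as the image of a leaf of $\Te$. The key geometric lemma is the separation
\[
f_i(\Te) \cap f_j(\Te) = \{0\} \qquad (i \neq j),
\]
which I would prove either by exhibiting half-planes or sectors that separate the three subtrees, or more abstractly by checking the open set condition for the IFS. Granted this, each $f_i(\Te) \setminus \{0\}$ is connected (a metric tree minus one of its leaves) and the three sets are pairwise disjoint, so $\Te \setminus \{0\}$ has exactly three components, giving $\nu_\Te(0) = 3$. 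Inducting on $|w|$, the homeomorphism $f_w \colon \Te \to f_w(\Te)$ sends the triple point $0$ to a point $f_w(0)$ interior to $f_w(\Te)$ in $\Te$ (the boundary-attachment points of $f_w(\Te)$ in $\Te$ are images of leaves of $\Te$, but $0$ is not a leaf), so this valence-three property transfers to the ambient tree. Since $\diam f_w(\Te) = 2^{-|w|}\diam \Te$ and every $x \in \Te$ lies in some $f_w(\Te)$ at each level, the countable set $V := \{f_u(0) : u \in \{1,2,3\}^\ast\}$ of triple points is dense in $\Te$.

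\textbf{Coding and valence.} To control valence from above I would exploit the address map $\pi \colon \{1,2,3\}^{\N} \to \Te$. Iterating the separation lemma yields a clean dichotomy: either $x \in V$ (in which case $\pi^{-1}(x)$ has exactly three elements, one for each direction at the triple point $x$), or $x \notin V$ has a unique infinite address $w$. In the latter case $x$ lies in the interior of every nested subtree $U_n := f_{w_1\cdots w_n}(\Te)$, and since $\diam U_n \to 0$ the $U_n$ form a shrinking connected neighborhood basis of $x$ in $\Te$. Because valence in a metric tree is a local invariant (for $n$ large, components of $U_n \setminus \{x\}$ biject with components of $\Te \setminus \{x\}$), the homeomorphism $f_{w_1\cdots w_n}$ yields $\nu_\Te(x) = \nu_\Te(y_n)$, where $y_n := (f_{w_1\cdots w_n})^{-1}(x)$ has the shifted (again unique) address $w_{n+1} w_{n+2}\cdots$; in particular $y_n \notin V$ as well.

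\textbf{The hard step.} The main obstacle is to close the argument by showing $\nu_\Te(x) \leq 2$ whenever $x \notin V$. If one supposes $\nu_\Te(x) \geq 3$, the identity above propagates this to every $y_n$, producing an infinite sequence of valence-three points outside $V$. By compactness one extracts $y_{n_k} \to y^\ast \in \Te$, and the plan is a case analysis. If $y^\ast \in V$, then a careful examination of which of the three addresses of $y^\ast$ the $y_{n_k}$ approach should force some $y_{n_k}$ itself to admit multiple addresses, contradicting its unique-address status. If instead two indices coincide, say $y_{n_k} = y_{n_l}$, then this common value is a fixed point of the iterated contraction $f_{w_{n_k+1}\cdots w_{n_l}}$, and a direct calculation shows that fixed points of iterated compositions of $f_1, f_2, f_3$ are either leaves or generic interior arc points, never branch points---again a contradiction. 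This ``triple branching persisting at every scale must be central'' step is the technical heart of the proof; intuitively, it encodes the fact that only the central images of $0$ can maintain three disjoint local arms through all scales.
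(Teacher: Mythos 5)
Your density argument is correct and matches the paper's: $0$ is a triple point (via the separation $\Te_i\cap\Te_j=\{0\}$ and the connectedness of $\Te\setminus\{\pm1\}$), its images $f_u(0)$ are triple points since they are interior points of the tiles $\Te_u$, and these points are dense because $\diam(\Te_u)\to 0$. This is exactly the content of the paper's Lemma~\ref{lem:trip} and the first half of Lemma~\ref{lem:branchCSST}.

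Your ``hard step'' for the upper bound $\nu_\Te(x)\le 2$ when $x\notin V$, however, has a genuine gap. Your case analysis is not exhaustive: you treat (a) the limit $y^\ast\in V$ and (b) two of the $y_{n_k}$ coinciding, but the case $y^\ast\notin V$ with all $y_{n_k}$ distinct is left uncovered, and nothing forces you into (a) or (b). Moreover, in case (a) the claim that some $y_{n_k}$ ``should'' then acquire multiple addresses is not a proof and appears false as stated: a point with a unique address can converge to a triple point without ever itself admitting multiple addresses. Case (b) (periodic fixed points are never branch points) is plausible but is merely asserted.

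The missing idea, and what the paper does instead, is Lemma~\ref{p:decomp}: for every $u\in W_n$ the tile $\Te_u$ has at most two relative boundary points, both of the form $f_w(0)$ for words $w$ of length $<n$. Given this, the upper bound is immediate by contradiction: if $t$ is a branch point with $t\ne f_u(0)$ for all $u$, pick points $x_1,x_2,x_3$ in three distinct components of $\Te\setminus\{t\}$, choose $n$ with $|x_k-t|>2^{1-n}$, and take a level-$n$ tile $\Te_u\ni t$. Then $t$ is an interior point of $\Te_u$ (as $t\notin\partial\Te_u\subset V$) and $x_k\notin\Te_u$ by the diameter bound, so each arc from $x_k$ to $t$ first enters $\Te_u$ at a point $y_k\in\partial\Te_u$; the $y_k$ lie in the three different components of $\Te\setminus\{t\}$ and so are distinct, contradicting $|\partial\Te_u|\le2$. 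This boundary-cardinality argument replaces your entire ``hard step'' and closes the proof without any limiting or address-uniqueness analysis. I'd suggest establishing the tile-boundary bound first; the rest of your outline then assembles into the paper's argument.
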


The set $\Te$  has an interesting geometric property,  namely it is a {\em quasi-convex} subset of $\C$., i.e., any two points 
in $ \Te$ can be joined by a path whose length is comparable to the distance of the points.

\begin{proposition} \label{prop:quasicon} There exists a constant 
$L>0$ with the following property: if $a,b\in \Te$ and $\alpha$ is the unique arc in $\Te$ joining  $a$ and $b$, then 
$$ \length (\alpha)\le L |a-b|. $$  
\end{proposition}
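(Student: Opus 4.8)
The plan is to replace the Euclidean metric by the intrinsic (path) metric $\rho(a,b):=\length(\alpha_{a,b})$, where $\alpha_{a,b}$ is the unique arc joining $a$ and $b$ in $\Te$. Since $|a-b|\le\length(\alpha_{a,b})$ always, the proposition is precisely the statement that $\rho(a,b)\le L\,|a-b|$ for a fixed $L$, i.e.\ that $\rho$ and $|\,\cdot\,|$ are bi-Lipschitz on $\Te$. I would use three structural inputs coming out of the analysis behind Propositions~\ref{prop:CSST}--\ref{prop:T123}: (i)~the three pieces meet pairwise only at the triple point $0$, so $f_i(\Te)\cap f_j(\Te)=\{0\}$ for $i\neq j$; (ii)~each $f_i$ maps $\Te$ homeomorphically onto a subtree, hence if $a,b$ lie in a common cell $f_w(\Te)$ then $\alpha_{a,b}\sub f_w(\Te)$, $\alpha_{a,b}=f_w(\alpha_{f_w^{-1}a,\,f_w^{-1}b})$, and so $\rho(a,b)=2^{-|w|}\rho(f_w^{-1}a,f_w^{-1}b)$ while $|a-b|=2^{-|w|}|f_w^{-1}a-f_w^{-1}b|$; and (iii)~$\Te\sub\overline{B(0,1)}$ (this disk is mapped into itself by each $f_i$), so $f_i(\Te)\sub\overline{B(c_i,\tfrac12)}$ with $c_1=-\tfrac12,\ c_2=\tfrac12,\ c_3=\tfrac i2$, three disks pairwise tangent at $0$. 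I also record that the three legs of $\Te$ at $0$ are the straight unit segments $[-1,0]$, $[0,1]$, $[0,i]$: by (i)--(ii) the arc $\alpha_{-1,1}$ passes through $0$ and satisfies $\alpha_{-1,1}=f_1(\alpha_{-1,1})\cup f_2(\alpha_{-1,1})$, and since $[-1,1]=f_1([-1,1])\cup f_2([-1,1])$ while the attractor of $\{f_1,f_2\}$ is unique, $\alpha_{-1,1}=[-1,1]$; applying $f_3$ gives the leg $[0,i]=f_3([-1,1])$.

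First I would bound lengths of arcs ending at a leaf. Let $D:=\sup\{\rho(x,q):q\in\{-1,1,i\},\ x\in\Te\}$. For $q=1$: if $x\in f_2(\Te)$ then $\rho(x,1)=\tfrac12\rho(f_2^{-1}x,1)\le\tfrac12 D$ by (ii) (note $f_2(1)=1$); if $x\notin f_2(\Te)$ then $x\in f_1(\Te)\cup f_3(\Te)$, which meets $f_2(\Te)$ only at $0$, so $\alpha_{x,1}$ runs through $0$ and $\rho(x,1)=\rho(x,0)+\rho(0,1)=\rho(x,0)+1\le\tfrac12 D+1$ (using $0=f_1(1)=f_3(-1)$ and (ii) for $\rho(x,0)$). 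The same holds at $q=-1$ and $q=i$, so $D\le 1+\tfrac12 D$, i.e.\ $D\le 2$; the finiteness needed to make this legitimate comes from running the same recursion on the exhausting finite subtrees $F^n(Y_0)$, $Y_0:=[-1,1]\cup[0,i]$, together with lower semicontinuity of length. Next I would prove $\rho(x,q)\le C_1|x-q|$ for every leaf $q$. Take $q=1$. By (iii), $\dist(1,f_1(\Te))$ and $\dist(1,f_3(\Te))$ are at least $\delta:=\tfrac{\sqrt5-1}{2}>0$, so if $x\notin f_2(\Te)$ the previous step gives $\rho(x,1)\le 2$, hence $\rho(x,1)/|x-1|\le 2/\delta$. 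If $x\in f_2(\Te)$, let $m\ge 0$ be maximal with $x\in f_2^{\,m}(\Te)$ (finite, since $\bigcap_m f_2^{\,m}(\Te)=\{1\}$); as $f_2$ fixes $1$, (ii) gives $\rho(x,1)/|x-1|=\rho(f_2^{-m}x,1)/|f_2^{-m}x-1|$, and $f_2^{-m}x\notin f_2(\Te)$, so this is again $\le 2/\delta$. Thus $C_1:=2/\delta$ works at $q=1$, and likewise — enlarging $C_1$ — at $q=-1,i$.

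Then comes the one genuinely technical point, a separation estimate at $0$: there is $\kappa>0$ with $|a-b|\ge\kappa(|a|+|b|)$ whenever $a\in f_i(\Te)$, $b\in f_j(\Te)$, $i\neq j$. When $a$ and $b$ both lie at distance $\ge\eps_0$ from $0$ this is immediate from (i), since $f_i(\Te)\ba B(0,\eps_0)$ and $f_j(\Te)\ba B(0,\eps_0)$ are then disjoint compacta. When $a$ is close to $0$ one descends the nested chain of cells containing $0$ inside $f_i(\Te)$, namely $f_i(\Te)=D_0\supset D_1\supset\cdots$ with $\diam D_\ell\le 2^{-\ell-1}$ and $\bigcap_\ell D_\ell=\{0\}$: by (iii) the generation-$(\ell{+}1)$ cells of $f_i(\Te)$ other than $D_\ell$ lie at distance $\ge c_*2^{-\ell}$ from $0$ for a fixed $c_*>0$, so $|a|<c_*2^{-\ell}$ forces $a\in D_\ell$, and then rescaling $D_\ell$ to unit size identifies $a$ with a macroscopic point of one of only \emph{finitely many} fixed copies of $\Te$ sitting at $0$ (a short computation of $f_1f_2^{\,k}$, $f_2f_1^{\,k}$, $f_3f_1^{\,k}$ shows the rescaled chains are among $\Te-1$, $\overline\Te-1$, $\overline\Te+1$, $i(\overline\Te+1)$). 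Applying the macroscopic case to the finitely many pairs of these model shapes — disjoint away from $0$ by (i) — and tracking the factors $2^{-\ell}$ produces $\kappa$. Granting this, the proof closes: given $a\neq b$, let $m$ be the largest integer with $a,b$ in a common generation-$m$ cell $f_w(\Te)$ (well defined, since membership in a common generation-$k$ cell is monotone in $k$, and finite because $a\neq b$); by (ii) the ratio $\rho(a,b)/|a-b|$ is unchanged under $f_w^{-1}$, so we may take $m=0$. Then $a,b$ lie in no common generation-$1$ cell, which forces $a\in f_i(\Te)$, $b\in f_j(\Te)$ for a unique pair $i\neq j$ (a point lying in two cells is $0$, hence in all three), $\alpha_{a,b}$ runs through $0$, and, using the second step inside $f_i(\Te)$ and $f_j(\Te)$ followed by the separation estimate,
\[
\rho(a,b)=\rho(a,0)+\rho(0,b)\ \le\ C_1\big(|a|+|b|\big)\ \le\ \frac{C_1}{\kappa}\,|a-b| .
\]
So $L:=C_1/\kappa$ works.

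The hard part is the separation estimate of the third step. Because the bounding disks in (iii) are tangent at $0$, the transversality of the three pieces there cannot be read off from (iii) alone; one must genuinely use that distinct cells of the same generation meet in at most one point, via the compactness argument over the finitely many blow-up shapes above. The first two steps, by contrast, are routine self-similar inductions.
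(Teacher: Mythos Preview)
Your overall plan is correct and in fact matches the paper's: (1) bound $\rho(x,0)$ linearly in $|x|$; (2) prove a separation estimate $|a-b|\ge\kappa(|a|+|b|)$ whenever $a,b$ lie in distinct first-generation pieces; (3) reduce the general case by passing to the deepest common cell $f_w(\Te)$ and using that $f_w$ is a similarity. Your step~(1), done via a contraction argument through the leaves $-1,1,i$, is a legitimate alternative to the paper's direct construction of a path from $0$ to $p$.

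The real difference is step~(2). You bound $\Te$ by the closed unit disk, so that $f_k(\Te)\sub\overline{B(c_k,\tfrac12)}$; these three disks are mutually \emph{tangent} at $0$, and you correctly note that the separation estimate is then not immediate. The paper sidesteps this entirely by using instead the convex hull $H$ of $1,i,-1,\tfrac12-\tfrac i2$: one has $\Te\sub H$, and the three images $H_k=f_k(H)$ lie in pairwise disjoint closed \emph{sectors} at $0$. The inequality $|a-b|\ge c_1(|a|+|b|)$ for $a\in H_k$, $b\in H_\ell$, $k\ne\ell$, is then an elementary fact about sectors, and your ``hard part'' disappears. Your blow-up/compactness sketch is plausible but not complete as written: you rescale $D_\ell$ to unit size to make $a$ macroscopic, but you do not explain what happens to $b$ under the \emph{same} rescaling, and in particular the case where $|a|$ and $|b|$ sit at very different scales (so that one of them leaves the model picture after rescaling) is not handled. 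That case is easy once isolated ($|a|\ll|b|$ gives $|a-b|\ge|b|-|a|\gtrsim|a|+|b|$), but it needs to be said; the remaining ``comparable scales'' case still requires you to identify \emph{both} rescaled points with points in your finite list of model shapes simultaneously, which takes a bit more bookkeeping than you indicate. Replacing the disk by $H$ makes all of this unnecessary.
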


Note that a unique (possibly degenerate) arc $\alpha\sub \Te$ joining $a$ and $b$ exists, because $\Te$ is a 
tree  according to 
 by Proposition~\ref{prop:CSST}.  
 
 Proposition~\ref{prop:quasicon} implies that  we can define a new metric $\varrho$ on $\Te$ 
by setting 
$ \varrho(a,b)=\length(\alpha) $
for $a,b\in \Te$,  
where $\alpha$ is the unique arc in $\Te$ joining  $a$ and $b$. 
Then the metric space $(\Te, \varrho)$ is   {\em geodesic}, i.e., any two points in $(\Te, \varrho)$ can be joined by a path in $\Te$ whose length is equal to the distance of the points.  It immediately follows from  Proposition~\ref{prop:quasicon} that metric spaces $\Te$ (as equipped with the Euclidean metric) and $(\Te, \varrho)$ are bi-Lipschitz equivalent by the identity map.

A natural  way to construct $(\Te,\varrho)$, at least  as an abstract metric space, is as follows. We start with a  line segment $J_0$ of length $2$. Its  midpoint $c$ subdivides  $J_0$ into two line segments of length $1$. We glue 
to $c$ one of the endpoints of another line segment $s$ of the same length. Then we obtain a set  $J_1$ consisting of three line segment of length $1$. The set $J_1$ carries the natural  path metric. We now repeat this procedure inductively. At the $n$th step we obtain a tree $J_n$ consisting of 
$3^n$ line segments of length $2^{1-n}$. To pass to $J_{n+1}$, each of these line segments $s$ is subdivided by its midpoint $c_s$ into two line segment of length $2^{-n}$ and we glue to $c_s$ one endpoint of another line segment of length $2^{-n}$.  

In this way, we obtain  an ascending sequence $J_0\sub J_1\sub \ldots $  of  trees equipped with  a geodesic metric. 
The union $J=\bigcup_{n\in \N_0}J_n $ carries a natural path metric $\varrho$ that agrees with the metric on $J_n$ for each $n\in \N_0$. As an abstract space  one can define $(\Te,\varrho)$ as the completion of the metric 
space $(J,\varrho)$.

 If one wants to realize $\Te$ as a subset of $\C$ by this construction, one starts with the initial line segment
$J_0=[-1,1]$, and adds $s=[0,i]$ in the first step
to obtain $J_1=[-1,0]\cup [0,1]\cup [0,i]$. Now one wants to choose  suitable Euclidean similarities $f_1$, $f_2$, $f_3$ that copy the interval $[-1,1]$ to $[-1,0]$, $[0,1]$, $[0,i]$, respectively.
One  hopes to  realize $J_n$  as a subset of $\C$ using an inductive procedure based on 
$$J_{n+1}=f_1(J_n)\cup f_2(J_n)\cup f_3(J_n), \quad n\in \N_0. $$
In order to  avoid self-intersections and   ensure that each set $J_n$ is indeed a tree, one has to be  careful about the orientations of the maps $f_1$, $f_2$, $f_3$. The somewhat non-obvious choice  of these  maps as in  \eqref{maps} leads to the desired result.  See Proposition~\ref{prop:CSSTgeo}  and the discussion near the end of Section~\ref{s:CSST} 
for a precise statements how to use the maps in \eqref{maps} to realize the sets $J_n$ as subsets of $\C$, and obtain $\Te$ (as in Definition~\ref{def:CSST}) as the closure 
of $\bigcup_{n\in \N_0} J_n$. A representation of $J_5$ is shown in Figure~\ref{fig:J5}.

\begin{figure}
\vspace{-0.7cm}
 \begin{overpic}[ scale=0.7
    ]{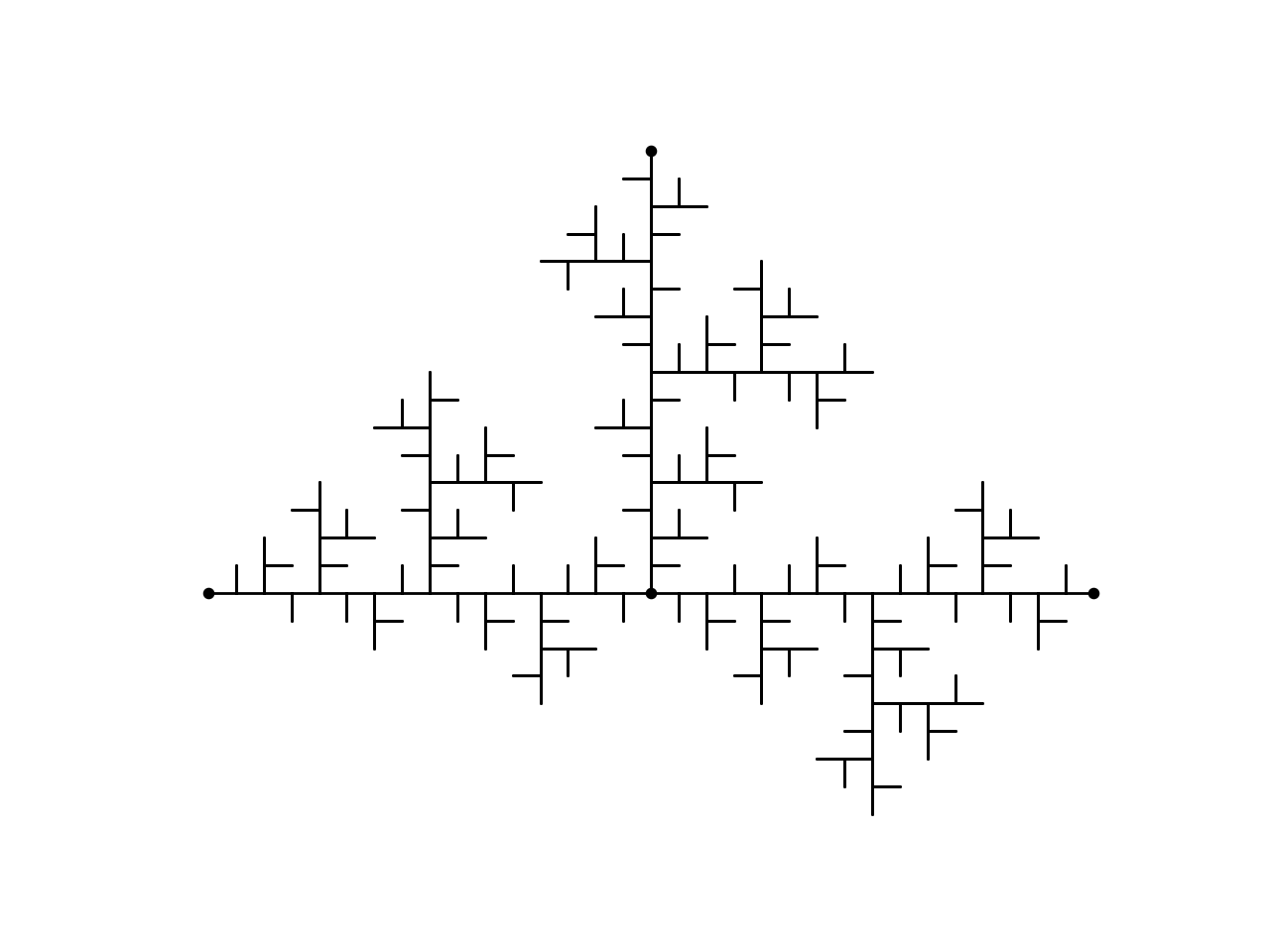}
    \put(52,63){ $i$}
    \put(14,25){ $-1$}
     \put(85,25){ $1$}
       \put(50,25){ $0$}
           \end{overpic}
  \vspace{-1cm}
\caption{The set $J_5$.}
\label{fig:J5}
\end{figure}

%
%

The conditions in Proposition~\ref{prop:T123} actually characterize the CSST topologically.

\begin{theorem}
\label{criterion}
A metric tree $(T,d)$ is homeomorphic to the continuum self-similar tree  $\Te$  if and only if the following conditions are true:
\begin{itemize}
\item[\textnormal{(i)}] For every point $x\in T$  we  have 
$\nu_T(x)\in\{1,2,3\}$. 

\smallskip 
\item[\textnormal{(ii)}] \ The set of triple points $\{x\in T: \nu_T(x) = 3\}$ is a   dense subset of $T$.
\end{itemize}
\end{theorem}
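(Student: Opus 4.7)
\medskip

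\noindent\textbf{Proof plan.} The forward direction is immediate: homeomorphisms preserve valence and topological density, so (i) and (ii) transfer from $\Te$ to $T$ by Proposition~\ref{prop:T123}. For the converse, the plan is to build a homeomorphism $\Phi\colon \Te\to T$ as a limit of a compatible sequence of homeomorphisms $\phi_n\colon J_n \to T_n$ between finite sub-trees, imitating in $T$ the inductive construction of $\bigcup_n J_n\subset \Te$ described after Proposition~\ref{prop:quasicon}.

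\medskip

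\noindent\textbf{Inductive construction of $\phi_n$.} For the base case, pick any triple point $c_0\in T$ (it exists by (ii)). Removing $c_0$ yields three components by (i); in each, take a leaf $p_k$ of $T$ and set $T_1:=[p_1,c_0]_T\cup[p_2,c_0]_T\cup[p_3,c_0]_T$. Define $\phi_1\colon J_1\to T_1$ arc-wise by sending $\{-1,1,i\}\mapsto \{p_1,p_2,p_3\}$ and $0\mapsto c_0$, homeomorphically on each of the three arcs. For the inductive step, given $\phi_n\colon J_n\to T_n$, process each edge $e\subset T_n$ (corresponding to some edge $\sigma\subset J_n$ with midpoint $c_\sigma$): use (ii) to choose a triple point $m_e$ in the interior of $e$. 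Because $\nu_T(m_e)=3$ and $e$ passes through $m_e$, exactly two of the three components of $T\setminus\{m_e\}$ contain parts of $e$; let $U_e$ be the third, which is disjoint from $T_n$. Take a leaf $\ell_e$ of $T$ in $\overline{U_e}$ and add the arc $[m_e,\ell_e]_T\subset \overline{U_e}$ to $T_n$ to form $T_{n+1}$. Extend $\phi_n$ to $\phi_{n+1}$ by sending the image of $c_\sigma$ to $m_e$ and mapping the new arc of $J_{n+1}$ attached at $c_\sigma$ homeomorphically onto $[m_e,\ell_e]_T$. Crucially, the hypothesis $\nu_T(m_e)=3$ is what makes the combinatorial patterns of $T_n$ and $J_n$ match at every stage. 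A standard bookkeeping enumeration of a countable dense set of triple points $\{t_k\}$ of $T$ (dense by (ii), countable since branch points in a tree are always countable) ensures that each $t_k$ lies in some $T_n$, so $\widetilde{J}:=\bigcup_n T_n$ is dense in $T$.

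\medskip

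\noindent\textbf{Passage to the limit.} The main obstacle is promoting the combinatorial compatibility $\phi:=\bigcup_n \phi_n\colon \bigcup_n J_n\to \widetilde{J}$ to a genuine homeomorphism $\Phi\colon \Te\to T$. This reduces to mesh control: the maximum diameter of a connected component of $\Te\setminus J_n$ (respectively $T\setminus T_n$) must tend to $0$ as $n\to\infty$. For $\Te$ this is automatic since $J_n$ is built from arcs of length $2^{1-n}$. For $T$, each component $C$ of $T\setminus T_n$ is a subtree attached to $T_n$ at a single point; if $\diam(\overline{C})$ did not shrink uniformly, a compactness argument on $T$ would produce an accumulation of such subtrees of definite size, whose limit would contain a triple point of $T$ omitted from $\bigcup_n T_n$, contradicting the enumeration. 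Given mesh control, the nearest-point retractions $r_n\colon \Te\to J_n$ and $\tilde r_n\colon T\to T_n$ (which exist because $\Te$ and $T$ are trees) converge uniformly to the identities, so $\phi_n\circ r_n\colon \Te\to T$ is a Cauchy sequence in the sup-norm and converges to a continuous map $\Phi\colon \Te\to T$. Applying the symmetric construction from $T$ to $\Te$ yields a continuous inverse, so $\Phi$ is the desired homeomorphism.

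\medskip

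\noindent\textbf{Main obstacle.} Setting up the inductive extension is routine once (i) guarantees that triple points in $T$ look combinatorially like $0\in\Te$. The technical heart is the mesh control in $T$: establishing the compactness-plus-density argument that prevents components of $T\setminus T_n$ from failing to shrink. This is where the countable enumeration of triple points interacts with the topology of $T$, and is the step that must be executed carefully.
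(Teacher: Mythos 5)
Your forward direction is correct and is exactly the paper's: homeomorphism invariance plus Proposition~\ref{prop:T123}.

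Your converse takes a genuinely different route from the paper. The paper decomposes $T$ ``from the outside in'' into subtrees $T_u$, $u\in W_n$, labeled by words with signed marked leaves, and then invokes Proposition~\ref{nested}; you instead build increasing finite subtrees $T_n$ ``from the inside out'' and try to pass to the limit via nearest-point retractions. Both strategies can in principle work, but your version has a real gap precisely at the step you yourself flag as the technical heart: mesh control for $T\setminus T_n$.

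The difficulty is that the hypotheses (i), (ii) say nothing quantitative, so the claim that the components of $T\setminus T_n$ shrink requires a carefully controlled \emph{choice} of the triple points $m_e$, not an arbitrary one plus a countable enumeration. Your construction picks an arbitrary triple point in the interior of each edge and then, separately, appeals to a ``bookkeeping enumeration'' of the triple points; but these are not integrated. Note first that ensuring each $t_k$ merely \emph{lies in} some $T_n$ is not sufficient: a triple point sitting in the interior of an edge of $T_n$ still has a whole branch hanging off it that is disjoint from $T_n$, and that branch can have large diameter. What is actually needed is that each $t_k$ eventually becomes a \emph{vertex} of $T_n$ (i.e.\ is chosen as some $m_e$), and moreover that the uncontrolled components between vertices cannot stay large. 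The paper's proof of the corresponding shrinking statement (Lemma~\ref{lem:diamto0}) rests on two ingredients that your proposal omits: the notion of \emph{height} $H_T(p)$ of a branch point introduced in \eqref{eq:height}, and Lemma~\ref{finite}, which says that only finitely many branch points have height exceeding a given $\delta>0$. The decomposition always cuts at a branch point of \emph{maximal height}, and it is exactly this choice, combined with Lemma~\ref{finite}, that drives the diameter estimate. Your ``compactness argument would produce an accumulation of subtrees of definite size whose limit contains an omitted triple point'' is a plausible-sounding sketch, but as written it does not produce a contradiction: the limiting triple point could perfectly well lie in $\bigcup_n T_n$ while being in the interior of every edge it visits, so nothing is omitted. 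To turn your sketch into a proof you would need essentially to reprove Lemma~\ref{finite} and then use it to direct the choice of $m_e$ (e.g.\ always cut at the interior triple point of maximal height), at which point you have recovered the paper's key lemma by other means. As it stands, the passage-to-the-limit step is not justified.

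A smaller issue worth noting: your inductive step adds exactly one new arc to each edge per stage, and you later assume compatibility of $\phi_{n+1}$ with $\phi_n$ on $J_n$. This is fine combinatorially, but the retraction argument for continuity of $\Phi$ and $\Phi^{-1}$ also relies on the same mesh control for $\Te$ and for $T$; for $\Te$ you get it for free from the explicit lengths $2^{1-n}$, but for $T$ it is again exactly the unproved shrinking statement.
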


We will  derive Theorem \ref{criterion}  from a slightly more general statement.  For its formulation let 
 $m\in \mb{N}$ with $m\geq 3$. We consider the class $\mc{T}_m$ consisting of all metric trees $T$ such that
 \begin{itemize}
\item[{(i)}] for every point $x\in T$  we  have 
$\nu_T(x)\in \{1,2,m\}$, and 

\smallskip 
\item[(ii)] \ the set of branch  points $\{x\in T: \nu_T(x)=m \}$ is a   dense subset of $T$.
\end{itemize}
Note that by Proposition~\ref{prop:T123} the CSST  $\Te$ satisfies the conditions in Theorem~\ref{criterion} with $m=3$, and so
$\Te$ belongs to the class of trees $\T_{3}$. Now the following statement is true which contains Theorem~\ref{criterion} as a special case. 

 \begin{theorem}\label{t:infinite} Let $m\in \N$ with $m\ge 3$.  Then all trees in $\mc{T}_{m}$ are homeomorphic to each other.
\end{theorem}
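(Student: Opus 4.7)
The plan is to prove Theorem~\ref{t:infinite} by a back-and-forth construction, exhibiting a homeomorphism between any two trees $T,T'\in\mc{T}_m$ as the direct limit of compatible homeomorphisms between increasing sequences of finite subtrees. Recall that for a finite set $S$ in a tree, its convex hull (the union of all arcs joining pairs of points of $S$) is itself a topological finite tree, and two finite topological trees are homeomorphic if and only if their combinatorial types---the incidence data of edges, branch points, and leaves---agree. Accordingly, I would construct nested finite subtrees $F_0 \sub F_1 \sub \cdots \sub T$ and $F_0' \sub F_1' \sub \cdots \sub T'$, together with homeomorphisms $\varphi_n\colon F_n\to F_n'$ whose restrictions satisfy $\varphi_{n+1}|_{F_n}=\varphi_n$, and such that $\bigcup_n F_n$ is dense in $T$ and $\bigcup_n F_n'$ is dense in $T'$.

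For the base case, take $F_0$ and $F_0'$ each to be the arc between two chosen leaves, and let $\varphi_0$ be any homeomorphism between them. For the inductive step, observe that any vertex of $F_n$ of $F_n$-valence at least three has $T$-valence exactly $m$, since the conditions defining $\mc{T}_m$ rule out other valences $\ge 3$; if its $F_n$-valence is $k<m$, then exactly $m-k$ components of $T\ba F_n$ are attached at that vertex. The enlargement of $F_n$ proceeds by (a) at each such vertex, using the density of branch points of $T$ to pick, in each missing component, a branch point of $T$ and attaching the corresponding arc; and (b) along each edge of $F_n$, inserting a new branch point of $T$ in its interior and sprouting a new arc there into one of the $m-2\ge 1$ newly available components. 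The analogous construction in $T'$, performed so that the combinatorial type of $F_{n+1}'$ agrees with that of $F_{n+1}$, yields the required extension $\varphi_{n+1}$. Alternating the roles of $T$ and $T'$ from step to step, I would further arrange that $F_n$ is a $2^{-n}$-net in $T$ and $F_n'$ is a $2^{-n}$-net in $T'$.

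The direct limit $\varphi\colon\bigcup_n F_n\to\bigcup_n F_n'$ is a bijection between dense subsets; the main obstacle is promoting it to a homeomorphism $T\to T'$. The key point is that each complementary component $C$ of $T\ba F_n$ is attached at a unique point $b\in F_n$ and is contained in the closed ball of radius $\diam(C)$ about $b$; if these diameters tend to zero as $n\to\infty$, then $\varphi$ is uniformly continuous on $\bigcup_n F_n$ and extends continuously to $T$. The $2^{-n}$-net condition controls the diameters on the $T$-side, while the symmetric bound on the $T'$-side is exactly what the alternation of roles in the inductive step provides; verifying that this alternation can indeed be maintained while preserving the combinatorial match is the most delicate part of the argument. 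Injectivity of the extension follows because points in distinct components of $T\ba F_n$ map to distinct components of $T'\ba F_n'$, and applying the symmetric extension of $\varphi^{-1}$ shows that the two extensions are mutually inverse continuous maps, yielding the desired homeomorphism between $T$ and $T'$.
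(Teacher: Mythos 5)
Your back-and-forth construction is a genuinely different route from the paper's. The paper works \emph{top-down}: it cuts $T$ at a branch point of maximal height, labels the resulting branches with signed marked leaves, iterates to produce a system of nested subtrees $T_u$, $u\in W_n(\mathcal{A})$, shows these pieces shrink (Lemma~\ref{lem:diamto0}, whose proof leans on the height function and Lemma~\ref{finite}), and then invokes the combinatorial gluing criterion of Proposition~\ref{nested}. You work \emph{bottom-up}: you grow a nested sequence of finite subtrees $F_n$ that exhaust a dense subset of $T$ and use the fact that the complementary components of $F_n$ shrink. Both arguments ultimately hinge on the same structural facts about dendrites (in particular, Lemma~\ref{lem:brfinite}/Lemma~\ref{finite}-type finiteness and Lemma~\ref{l:uni_cont}-type uniform continuity of arcs), but the paper's choice is deliberately more ``quantitative''---the height function and the decomposition machinery are precisely what carries over to the quasisymmetric refinements in \cite{BM19a,BM19b}---whereas your argument is closer to a classical back-and-forth/Fra\"\i ss\'e-style construction and is arguably easier to motivate. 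One advantage of the paper's scheme is that the shrinking is forced by a single global choice (maximal height) rather than by a net condition maintained through the alternation.

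Two points in your outline deserve more than the passing mention you give them. First, the inference ``$F_n$ is a $2^{-n}$-net $\Longrightarrow$ $\diam(C)\to 0$ uniformly for the complementary components $C$ of $F_n$'' is \emph{not} immediate: a point $x\in C$ may be metrically close to some point of $F_n$ far from the unique attachment point $b$ of $C$. You need the fact that in a tree, nearby endpoints force a small-diameter connecting arc (this is exactly Lemma~\ref{l:uni_cont} in the paper, proved via Hahn--Mazurkiewicz); since $[x,y]$ must pass through $b$ whenever $y\in F_n$, this lemma converts the net condition into the diameter bound. Second, you flag the compatibility of the alternation with the combinatorial matching as ``the most delicate part'' but do not argue it; the thing to check is that the abstract combinatorial type of $(F_{n+1},F_n)$ can always be realized over $(F_n',\varphi_n)$, which works because every branch point of either tree has the same valence $m$ (so ``free slots'' always match in number) together with Lemma~\ref{triple_dense} to supply branch points inside any designated subarc. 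Neither issue is fatal, but both require real work, and the second is essentially where the paper spends its effort (the signed-marked-leaf bookkeeping of Section~\ref{s:proof} and Lemma~\ref{lem:condtrue2}).
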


 Theorems~\ref{criterion} and \ref{t:infinite}  are not new. In a previous version of this paper, we considered Theorem~\ref{criterion} as a ``folklore"  statement,
  but we did not have a reference for a proof. Later, the paper 
\cite{CG94} was brought to our attention which contains a more general result which implies Theorem~\ref{t:infinite}, and hence also Theorem~\ref{criterion}  (see \cite[Theorem 6.2]{CG94}; the proof there seems to be incomplete though---the continuity of the map $h$ on the dense subset of $X$ needs more justification). 
Theorem~\ref{t:infinite} was explicitly stated in \cite[(6), p.~490]{Ch80}, but it  seems that the origins of Theorem~\ref{t:infinite} can be traced back much further to \cite{Wa23} (see also \cite[Chapter X]{Me32}, and \cite{CC98} for more pointers to the relevant older literature about dendrites). 

We will give a complete proof of Theorem~\ref{t:infinite}. It  is based on  ideas that are quite different from those in \cite{CG94}, but we consider our method of proof  very natural.  It is also related to some other recent work, in particular \cite{BM19a, BM19b};  so one can view the present paper as an introduction to these ideas.  We will say more about our motivation below.

Our proof of Theorem~\ref{t:infinite} can be outlined as follows. 
 Fix $m$ as in the statement and consider a tree $T$ in $\T_m$. 
 Then we cut $T$ into  $m$
 subtrees 
 at  a carefully chosen branch point. This process is repeated inductively. One labels the subtrees obtained in this way by finite 
 words consisting of  letters in the   alphabet   $\mathcal{A} =\{1,2, \dots, m\}$. The labels are  chosen so that  if $S$ is another tree in $\T_m$ and one decomposes $S$ in a similar manner, then  one has the same combinatorics (i.e., intersection and inclusion pattern)  for the subtrees in $T$ and $S$. The desired homeomorphism 
 between  $T$ and $S$ can then be obtained from a general statement that produces a homeomorphism between two spaces, if they admit matching decompositions  into pieces satisfying suitable conditions
 (see  Proposition~\ref{nested}).

The CSST is related to metric trees appearing in other areas of mathematics. One of these objects is the 
 {\em (Brownian) continuum random tree} (CRT). This is a random  tree introduced by  Aldous \cite{AldousI} when he studied the scaling limits of simplicial trees arising  from the critical Galton-Watson process. One can describe the  CRT as follows. We consider a sample of Brownian excursion $(\be_t)_{0\leq t\leq 1} $ on the interval $[0,1]$. For $s,t\in [0,1]$, we set 
$$d_{\be}(s,t) = \be(s) + \be(t) - 2 \inf\{ \be(r): \min(s,t)\le r \le 
\max(s,t)\}.$$ Then $d_{\be}$ is a pseudo-metric on $[0,1]$. 
We define an equivalence relation on $[0,1]$ by setting $s\sim t$ if $d_{\be}(s,t) = 0$. Then $d_{\be}$ descends to a metric on  the quotient space $T_{\be}=[0,1]/\sim$. The metric space 
$(T_{\be}, d_{\be})$
  is almost surely a metric tree (see \cite[Sections~2 and 3]{LeGall}).
Curien \cite{Curien} asked the following question.

\begin{bla}
Is the topology of the CRT almost surely constant, that is, are  two independent samples of the CRT  almost surely homeomorphic?
\end{bla}

This question was the original motivation  for the present work and  
we found  a positive answer based on the following statement.  

\begin{corollary}\label{cor:CRT} 
A sample $T$ of the  CRT is almost surely homeomorphic to the CSST~$\Te$.
\end{corollary}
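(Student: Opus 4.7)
The plan is to apply the topological characterization in Theorem~\ref{criterion}. A sample $(T_\be,d_\be)$ of the CRT is almost surely a metric tree, as recalled just above the statement, so it is enough to verify that almost surely (i) every $x\in T_\be$ satisfies $\nu_{T_\be}(x)\in\{1,2,3\}$ and (ii) the set of triple points is dense in $T_\be$. Once these are in hand, Theorem~\ref{criterion} immediately produces the desired homeomorphism with $\Te$.

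For (i) I would use the standard identification of the valence at $x=[t]\in T_\be$ with the cardinality of its fibre under the quotient map $\pi\colon [0,1]\to T_\be$: if $\pi^{-1}(x)=\{t_1<\dots<t_k\}\subset(0,1)$, then the $k+1$ open subintervals into which the $t_i$ split $(0,1)$ project onto the $k+1$ connected components of $T_\be\setminus\{x\}$ (distinctness follows from the fact that the lowest point of the arc in $T_\be$ between any two of these projections is exactly $x$), so $\nu_{T_\be}(x)=k+1$, up to a harmless boundary adjustment at the root. The required bound $\nu_{T_\be}(x)\le 3$ thus reduces to the classical almost sure property of Brownian excursion, going back to Aldous and recorded e.g.\ in~\cite{LeGall}, that no equivalence class of $\sim$ has cardinality greater than two. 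This is the main substantive input from probability theory.

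For (ii) I would note that every equivalence class $\{s<t\}\subset(0,1)$ of cardinality two corresponds to a configuration $\be(s)=\be(t)=\min_{[s,t]}\be$ with $\be>\be(t)$ on $(s,t)$, hence to a strict local minimum of the excursion; such times are almost surely dense in $[0,1]$. Consequently the corresponding branch points are dense in $T_\be$, and by~(i) each of them must in fact be a triple point. Combining (i) and (ii) with Theorem~\ref{criterion} yields the corollary. The only genuinely non-trivial step is the probabilistic input to~(i); everything else is either a direct application of our topological characterization or a routine consequence of standard path regularity of Brownian excursion.
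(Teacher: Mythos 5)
Your overall strategy is exactly the paper's: verify the two conditions of Theorem~\ref{criterion} for a sample of the CRT and invoke the theorem. The paper simply cites the established facts that, almost surely, every point of $T_\be$ has valence in $\{1,2,3\}$ and the triple points are dense (referring to \cite[Theorem 4.6]{Duq-LeGall} or \cite[Proposition 5.2~(i)]{LeGall}), and is done. You instead attempt to re-derive these facts from the fibre structure of the quotient map $\pi\colon[0,1]\to T_\be$, and here the argument goes wrong in two compensating ways.

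First, the identity $\nu_{T_\be}(x)=k+1$ for $|\pi^{-1}(x)|=k$ is incorrect. The two outer intervals $(0,t_1)$ and $(t_k,1)$ do \emph{not} project to two distinct components: points near $0$ and points near $1$ both map near the root $p(0)=p(1)$, and the arc joining them in $T_\be$ bottoms out near the root, not at $x$ (the parenthetical ``distinctness'' claim fails precisely for this pair). Together they give a single component, the one containing the root, while the $k-1$ intermediate intervals $(t_i,t_{i+1})$ give $k-1$ further components. So the correct count for $x$ different from the root is $\nu_{T_\be}(x)=k$, not $k+1$. Second, the classical a.s.\ property of Brownian excursion is that equivalence classes of $\sim$ have cardinality at most \emph{three}, not two; and correspondingly a triple point of $T_\be$ arises from a class $\{s<\tau<t\}$ of size three in which the middle time $\tau$ is a two-sided local minimum with $\be(s)=\be(\tau)=\be(t)=\min_{[s,t]}\be$ and $\be>\be(\tau)$ on $(s,\tau)\cup(\tau,t)$. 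A class $\{s<t\}$ of size exactly two yields a point of valence $2$, and as you describe it ($\be>\be(t)$ on $(s,t)$ with $\be$ dipping below $\be(s)$ just outside $[s,t]$) it does not produce a local minimum at all, so the claim that such classes ``correspond to a strict local minimum'' is off as well. Your two miscounts cancel to give the correct bound $\nu\le 3$ and the correct density conclusion, but the reasoning as written would not survive inspection. The cleanest repair is to do what the paper does: cite the known results on the multiplicity of points in the CRT directly, and then apply Theorem~\ref{criterion}.
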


\begin{proof}
As we discussed, a sample $T$ of the CRT is almost surely a 
metric tree (see \cite[Sections~2 and 3]{LeGall}).
Moreover, for such a sample $T$ 
almost surely for every point $x\in T$, the valence $\nu_T(x)$  is either 
$1$, $2$ or $3$, and the set $\{x: \nu_T(x)=3\}$ of triple points is  dense in $T$ 
(see \cite[Theorem 4.6]{Duq-LeGall} or \cite[Proposition 5.2 (i)]{LeGall}). 
It follows from Proposition~\ref{prop:T123} and 
Theorem~\ref{criterion} that a sample $T$ of the CRT is almost surely homeomorphic to the CSST $\Te$. \qed
\end{proof}

Informally, Corollary~\ref{cor:CRT}  says  that  the topology of the CRT is (almost surely) constant and given by the topology of a deterministic model space, namely the CSST. In particular, almost surely any two independent samples of the CRT are 
homeomorphic. This  answers Curien's question in the positive.   As we found out after we had obtained   proofs for Theorem~\ref{criterion} and Corollary~\ref{cor:CRT}, Curien's question had already been answered implicitly in \cite{Croydon-Hambly}. There the authors used the distributional self-similarity property of the CRT and showed that the CRT is isometric to a metric space with a random metric. This space is constructed similarly to the CSST as the attractor of an iterated 
function system with maps  very similar to \eqref{maps} (they contain an 
additional parameter though  which is unnecessary if  one uses the maps in  \eqref{maps}).




\begin{figure}
\begin{center}
\includegraphics[scale=0.7, clip=true, trim=0mm 23mm 0mm 23mm]{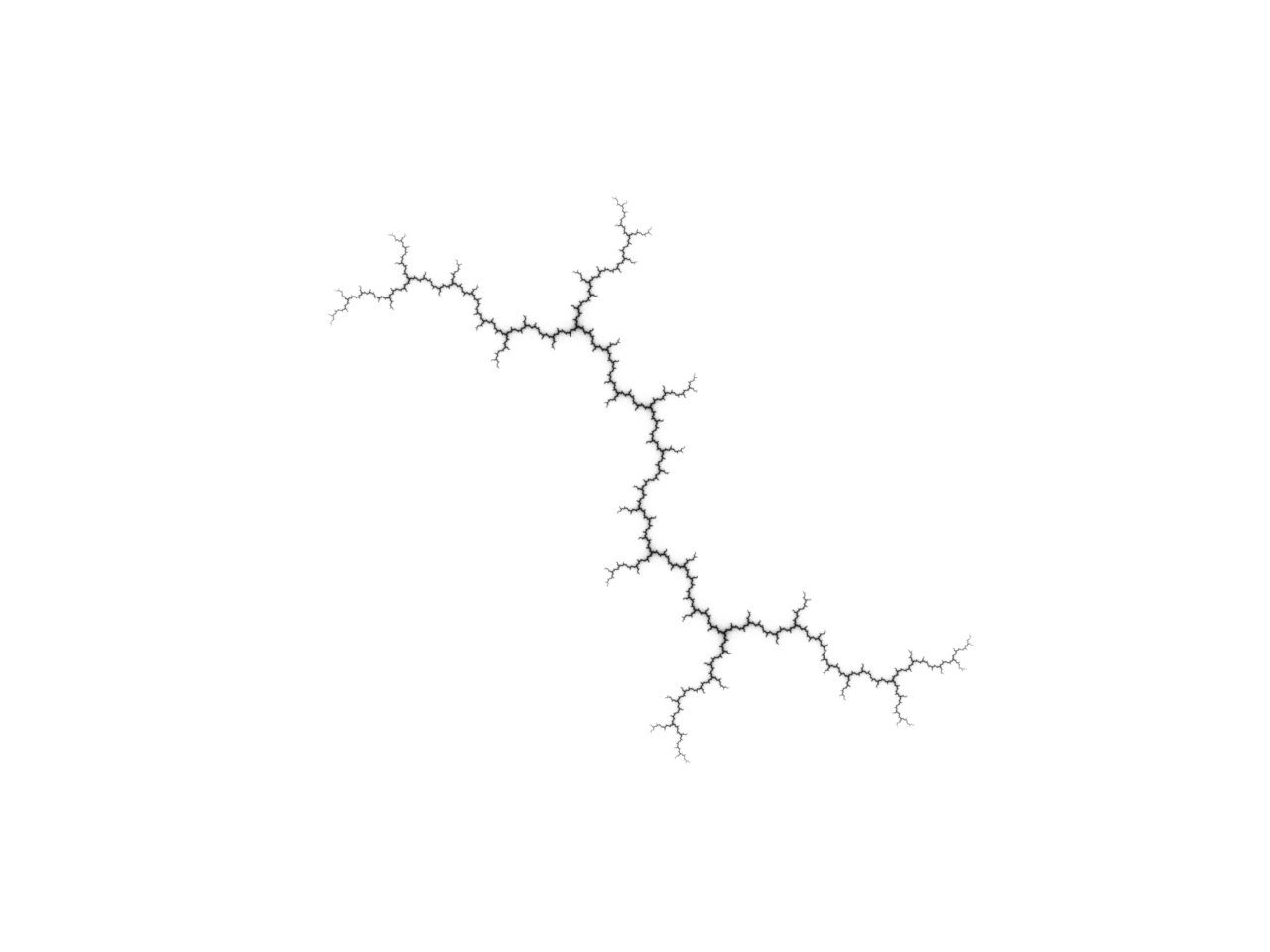}
\end{center}
\caption{The Julia set of $P(z)=z^2+i$.}
\label{fig:Julia}
\end{figure}

An important source of trees is given by  Julia sets of 
postcritically-finite polynomials without  periodic critical points in $\C$.  It follows from \cite{DH} (or see \cite[Theorem V.4.2]{CG}) that the Julia sets of such  polynomials are indeed trees. One can show    that the Julia set $\mathcal{J}(P)$ of the polynomial $P(z)=z^2+i$ (see Figure~\ref{fig:Julia}) satisfies the conditions in Theorem \ref{criterion}. Accordingly, $\mathcal{J}(P)$ is homeomorphic to 
the CSST.

%
%

%

There are several directions in which one can pursue these topics  further. For example, one can  study the topology of more general trees than those in the classes $\mc{T}_m$.  One  may want to  replace $m$ with  any finite (or even infinite) list  of allowed valences for branch points, including branch points of infinite valence. In an earlier version of our paper, we discussed 
this  in more detail. Since we learned  that 
these results are already contained in \cite{CC98}, we decided 
to skip this in the present version.  

There is one important variant 
of Theorem~\ref{t:infinite}  that we like to mention though. Namely, one can consider the (non-empty) class $\mc{T}_\infty$ of trees $T$ such that $\nu_T(x)\in \{1,2,\infty\}$ for all $x\in T$ and such that the set of branch points of $T$ (i.e., in this case the set 
$\{x\in T:\nu_T(x)=\infty\}$) is dense in  $T$. Then all trees in 
$\mc{T}_\infty$ are homeomorphic to each other (our method of proof  does not directly apply here, but 
one can  use our approach based on a more general version of Proposition~\ref{nested}). Moreover, each tree $T$ in $\mc{T}_\infty$ is {\em universal} in the sense that every tree $S$ admits a topological embedding into $T$. These results are due to Waszewski \cite{Wa23} (see \cite[Section 10.4]{Na} for a modern exposition of this universality property; see also \cite{Ch80} for a discussion of a universality property of the trees in $\mc{T}_m$, 
$m\in \N$, $m\ge 3$).

 Trees in $\mc{T}_\infty$ are also interesting, because they naturally arise in probabilistic models. More specifically, the so-called stable trees with index $\alpha\in (1,2]$  are generalizations of the CRT  (see \cite[Section~4]{LeGall} for the definition). For  fixed $\alpha \in (1,2)$, a  sample $T$ of such a stable tree  belongs to $\mc{T}_{\infty}$ almost surely  \cite[Proposition 5.2~(ii)]{LeGall}. By the previous discussion this implies  that   two independent samples of stable trees for given   $\alpha \in (1,2)$ are almost surely homeomorphic.  Note that  the Julia set of a polynomial never belongs to  $\mc{T}_{\infty}$. This follows from results due to Kiwi (see  \cite[Theorem 1.1]{Kiw02}).
 
 Another direction for further investigations are questions that are more related to  {\em geometric} properties of metric trees, in contrast to purely {\em topological} properties. In particular, one can study the {\em quasiconformal geometry} of the CSST and other trees (for a survey on the general topic of quasiconformal geometry see \cite{Bo06}).

 One of the basic notion here is the concept of quasisymmetric equivalence. By definition two metric spaces $X$ and $Y$ are called {\em quasisymmetrically equivalent} if there exists a quasisymmetry $f\: X\ra Y$. Roughly speaking,  a quasisymmetry is a homeomorphism with good geometric control: it sends metric balls to ``roundish" sets with uniformly controlled eccentricity  (for the precise definition of a quasisymmetry and other basic concepts of quasiconformal geometry see \cite{He}). 
Since every quasisymmetry is a homeomorphism, two spaces are homeomorphic if they are quasisymmetrically equivalent.
So this gives a stronger type of equivalence for metric spaces that has a more geometric flavor and goes beyond mere topology. 

A natural problem  in this context is to characterize the CCST $\Te$ up to quasisymmetric equivalence, similar to 
Proposition~\ref{prop:T123} which gives a topological characterization. This problem is solved in \cite{BM19b}. 
The precise statement is too technical to be included here, but 
roughly speaking the conditions on a metric tree $T$ to be quasisymmetrically equivalent to $\Te$ are similar in sprit to 
the conditions in Proposition~\ref{prop:T123}, but of a more ``quantitative" nature.

For example, one of the conditions stipulates that 
$T$ be {\em trivalent} (i.e., all branch points of $T$ are triple points),
but not only should the  branch points of $T$ form a dense subset of $T$,
 but  $T$ should be {\em uniformly branching} in the sense that every arc $\alpha \sub T$ contains a branch point $p$ of 
 {\em  height} $H_T(p)$ 
 comparable to the diameter of $\alpha$. Here the height 
 $H_T(p)$ is the diameter of the third largest branch of $p$
 (see the discussion around \eqref{eq:height} for more details).  
 
In our proof of  Theorem~\ref{criterion}   we 
first realized that this concept of height of a branch point plays a very important role in understanding the geometry and topology of trees. 
This concept  is also used in  \cite{BM19a, BM19b}.

The present  paper and   \cite{BM19a, BM19b} have another 
 common feature. In all of these works  it is important to have   good decompositions of the spaces studied,  depending on the problem under consideration. This line of thought in the context of quasiconformal geometry can be traced back to 
 \cite[Proposition 18.8]{BM}. More recently, Kigami \cite{Kig18} 
 has systematically investigated such decompositions 
 in the general framework  of partitions of a space given by sets  that are labeled by the vertices of a (simplicial) tree. 
This common philosophy with other recent work is the main motivation why we wanted to present the proof of  the known 
Theorem~\ref{t:infinite} from our perspective. 
 
 One can use the characterization of the CSST up to quasisymmetric equivalence established in \cite{BM19a} to prove the following statement (unpublished work by the authors): if the Julia set $\mathcal{J}(P)$ of a postcritically-finite polynomial $P$ with no  periodic critical points in $\C$ is homeomorphic to the CSST, then  $\mathcal{J}(P)$ is  quasisymmetrically equivalent to the CSST.

Finally,  we mention in passing that the geometric properties of the continuum random tree (CRT)  were considered in the recent paper \cite{LR19} by Lin and Rohde. Though
Lin and Rohde 
do not study quasisymmetric equivalence, many  of their considerations still fit   into the general framework of quasiconformal geometry.



The present  paper is organized as follows.  In  Section \ref{s:homeo}  we state and prove a  general criterion for two metric spaces to be homeomorphic based on the existence of combinatorially equivalent decompositions of the spaces. 
In Section~\ref{sec:toptree} we collect some general facts about trees that we use later. The  CSST  is  studied 
in Section~\ref{s:CSST}. There we provide proofs of 
Propositions~\ref{CSSTex},~\ref{prop:CSST},~\ref{prop:T123}, 
and~\ref{prop:quasicon}.   In Section \ref{s:proof}
 we explain how to decompose  trees in $\mc{T}_m$ with 
 $m\in \N$, $m\ge 3$. 
 Based on this, we then present  a proof Theorem~\ref{t:infinite}.   Theorem~\ref{criterion} is an immediate consequence.
 
  \section{Constructing  homeomorphisms between spaces} \label{s:homeo}
 
Throughout this paper, we use fairly standard metric space notation. If $(X,d)$ is a metric space, then we denote by $B(a,r)=\{x\in X\: d(a,x)<r\}$ the open ball of radius $r>0$ centered at $a\in X$.  
If $A,B\sub X$, then  $\diam(A)=\sup\{d(x,y):x,y\in A\}$  is the diameter of 
$A$ and $\dist(A,B)=\inf\{d(x,y): x\in A,\, y\in B\}$ the (minimal) distance of  $A$ and $B$. Similarly, if $a\in X$, then $\dist(a,B)=
\dist(\{a\},B)$ denotes the distance of the point $a$ to the set $B$. Finally, if $\ga$ is a path in $X$, then 
$\length(\ga)$ stands for  its length. 

Before we discuss trees in more detail and turn our attention to  the CSST, we will establish the following proposition that is  the key to showing that  two trees are homeomorphic. The statement will also give us some guidance for  the desired properties of tree decompositions that we will discuss in the following sections. 
 The proposition  is inspired by  \cite[Proposition 18.8]{BM}, which provided geometric conditions for the  decomposition of a space that can be used to construct quasisymmetric homeomorphisms.

\begin{proposition} \label{nested} Let $(X,d_X)$ and $(Y,d_Y)$ be  compact metric spaces. Suppose that for each $n\in \N$, the space  $X$ admits a decomposition   $X=\bigcup_{i=1}^{M_n} X_{n,i}$ as a finite union of non-empty  compact  subsets $X_{n,i}$, $i=1, \dots, M_n\in \N$, with the following 
properties for all $n$, $i$, and $j$: 
\begin{itemize}
\item[\textnormal{(i)}] Each set $X_{n+1,j}$ is the subset of some set $X_{n, i}$.

\smallskip  
\item[\textnormal{(ii)}] \ Each set $X_{n, i}$ is equal to  the union of some of the sets  $X_{n+1,j}$.

\smallskip 
\item[\textnormal{(iii)}] \ \ $\max_{1\leq i\leq M_n} \diam (X_{n, i}) \to 0$ as $n\to \infty$.
\end{itemize}

Suppose that for $n\in \N$ the space $Y$ admits a decomposition $Y=\bigcup_{i=1}^{M_n} Y_{n,i}$ as a union of non-empty compact  subsets $Y_{n,i}$, $i=1, \dots, M_n$, with  properties analogous to \textnormal{(i)}--\textnormal{(iii)} such that 
\begin{equation}\label{eq:iff1} X_{n+1,j} \sub  X_{n,i} \mbox{ if and only if }~Y_{n+1,j} \sub Y_{n,i}
\end{equation}
and 
 \begin{equation}\label{eq:iff2} X_{n,i}\cap X_{n,j} \neq \emptyset \mbox{ if and only if }~Y_{n,i} \cap Y_{n,j} \neq \emptyset
 \end{equation}  
for all $n$, $i$,  $j$.

Then there exists a unique homeomorphism $f\: X\ra Y$ such that 
$f(X_{n,i})=Y_{n,i}$ for all $n$ and $i$.  
\end{proposition}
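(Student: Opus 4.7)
The plan is to construct $f\: X\ra Y$ directly from the combinatorial data and verify that it is a homeomorphism. For $x\in X$ and $n\in\N$, let $I_n(x)=\{i\: x\in X_{n,i}\}$, a non-empty finite subset of $\{1,\dots,M_n\}$, and set $Y_n(x)=\bigcup_{i\in I_n(x)}Y_{n,i}$. I would define $f(x)$ to be the unique point of $\bigcap_{n\in\N}Y_n(x)$.

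To justify this definition I would verify three things about the sequence $(Y_n(x))$. First, it is decreasing: if $j\in I_{n+1}(x)$, then property (i) gives some $i$ with $X_{n+1,j}\sub X_{n,i}$; since $x\in X_{n+1,j}$ we get $i\in I_n(x)$, and \eqref{eq:iff1} yields $Y_{n+1,j}\sub Y_{n,i}\sub Y_n(x)$. Second, each $Y_n(x)$ is compact and non-empty as a finite union of non-empty compact sets. Third, $\diam Y_n(x)\ra 0$: any two indices $i,j\in I_n(x)$ satisfy $x\in X_{n,i}\cap X_{n,j}$, so \eqref{eq:iff2} gives $Y_{n,i}\cap Y_{n,j}\ne\emptyset$, hence $\diam Y_n(x)\le 2\max_k \diam Y_{n,k}\ra 0$ by the diameter hypothesis for $Y$. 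Consequently $\bigcap_n Y_n(x)$ is a single point and $f$ is well-defined.

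Next, I would prove $f(X_{n,i})\sub Y_{n,i}$ by using property (ii) to construct, for any $x\in X_{n,i}$, a descending chain $X_{n,i}\supseteq X_{n+1,i_{n+1}}\supseteq X_{n+2,i_{n+2}}\supseteq\ldots$ all containing $x$; the corresponding chain in $Y$ stays inside $Y_{n,i}$ by \eqref{eq:iff1}, and its nested intersection is a single point, which must equal $f(x)$ since it lies in every $Y_m(x)$. For continuity at $x$, given $\eps>0$ pick $n$ with $2\max_k\diam Y_{n,k}<\eps$. The set $V'=\bigcup_{i\notin I_n(x)}X_{n,i}$ is closed and does not contain $x$, so $U=X\setminus V'$ is an open neighborhood of $x$. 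For $y\in U$ one has $I_n(y)\sub I_n(x)$ and hence $f(y)\in Y_n(y)\sub Y_n(x)$; since also $f(x)\in Y_n(x)$, we get $d_Y(f(x),f(y))\le\diam Y_n(x)<\eps$.

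Swapping the roles of $X$ and $Y$ produces a continuous $g\: Y\ra X$ with the analogous properties. To see $g\circ f=\mathrm{id}_X$, note that $f(x)\in Y_{n,i}$ for some $i\in I_n(x)$ by the previous step, so $i\in I_n(f(x))$ and $x\in X_{n,i}\sub X_n(f(x))$; but $g(f(x))\in X_n(f(x))$ as well, and $\diam X_n(f(x))\ra 0$ forces $g(f(x))=x$. Symmetrically $f\circ g=\mathrm{id}_Y$, so $f$ is a homeomorphism, and the inclusion $g(Y_{n,i})\sub X_{n,i}$ upgrades $f(X_{n,i})\sub Y_{n,i}$ to equality. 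Uniqueness is automatic: any homeomorphism $\tilde f$ respecting the decompositions places $\tilde f(x)$ in every $Y_{n,i}$ with $i\in I_n(x)$, together with $f(x)$, so letting $n\ra\infty$ gives $\tilde f=f$. The main obstacle I anticipate is the bookkeeping for boundary points with $|I_n(x)|\ge 2$: the whole argument hinges on \eqref{eq:iff2} being exactly strong enough to confine these ambiguities within overlapping $Y$-pieces whose combined diameter still shrinks, and this is the point where one must be most careful; once this is organized, the rest is routine compactness and descending-chain bookkeeping.
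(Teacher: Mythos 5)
Your proof is correct and follows essentially the same strategy as the paper: define $f$ by matching nested shrinking pieces, establish $f(X_{n,i})\sub Y_{n,i}$ via descending chains, build the inverse $g$ symmetrically, and use the shrinking diameters to close the argument. The one variant worth flagging is that you define $f(x)$ directly via the star $Y_n(x)=\bigcup_{i\in I_n(x)}Y_{n,i}$ rather than by an arbitrarily chosen nested chain, which sidesteps the separate well-definedness check the paper performs, and your continuity argument is a pointwise one (removing the finitely many pieces missing $x$) rather than the paper's uniform Lebesgue-number estimate via a global $\delta$ --- both are valid, and yours is arguably a touch cleaner.
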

In particular, under these assumptions the spaces $X$ and $Y$ are homeo\-morphic. 

\begin{proof}
We define a map $f\: X\ra Y$ as follows. For each point $x\in X$, by (ii) and (iii) there exists a nested sequence of sets $X_{n,i_n}$, $n\in \N$,  such that
$\{x \} = \bigcap_n X_{n,i_n}.$
Then the corresponding sets $Y_{n,i_n}$, $n\in \N$, are also nested by \eqref{eq:iff1}. Since these sets are non-empty and compact,  by condition (iii) for the space  $Y$ this implies  that there exists a unique point $y\in\bigcap_n Y_{n,i_n}$. We define $f(x)=y$.  

Then $f$ is well-defined. To see this,  suppose  we have  another nested sequence $X_{n,i'_n}$, $n\in \N$,  such that
$\{x \} = \bigcap_n X_{n,i_n'}.$ Then there exists a unique point $y'\in \bigcap_n Y_{n,i'_n}$. Now $x\in X_{n,i_n}\cap X_{n,i'_n}$ and so 
$ Y_{n,i_n}\cap Y_{n,i'_n}\ne \emptyset$  for all $n\in \N$ by \eqref{eq:iff2}. By condition (iii)  for $Y$, this is only possible if $y=y'$. So $f\: X\ra Y$ is indeed well-defined.

One can define a map $g\: Y\ra X$ by a similar procedure. Namely, for 
each $y\in Y$ we can find a nested sequence $Y_{n,i_n}$, $n\in \N$,  such that
$\{y \} = \bigcap_n Y_{n,i_n}.$
 Then there exists a unique point $x\in  \bigcap_n X_{n,i_n} $ and if we set $g(y)=x$, we obtain a well-defined map $g\: Y\ra X$. 
 
 It is obvious from the definitions that the maps $f$ and $g$ are inverse to each other.   Hence they define  bijections between $X$ and $Y$. 
 
 Conditions (i) and (ii) imply that if $X_{k,i}$ is a set in one of the decompositions of $X$ and $x\in X_{k,i}$, then there exists a nested sequence 
 $X_{n,i_n}$, $n\in \N$, with $X_{k,i_k}=X_{k,i}$ and $\{x \} = \bigcap_n X_{n,i_n}.$ This implies that $f(x)\in Y_{k,i}$ and so $f(X_{k,i})\sub Y_{k,i}$.
 Similarly, $g(Y_{k,i})\sub X_{k,i}$. Since $g=f^{-1}$, we have  $f(X_{k,i})=Y_{k,i}$ as desired. It is clear that this last condition together with our assumptions determines $f$ uniquely.

 It remains to show that $f$ is a homeomorphism. For this  it suffices to prove  that $f$ and $f^{-1}=g$ are continuous. Since the roles of $f$ and $g$ are completely symmetric, it is enough  to establish  that $f$ is continuous.
 
 For this,   let   $\e>0$ be arbitrary. By (iii) we can choose $n\in \N$ such that
$$\max \{ \diam (Y_{n, i}) : 1\leq i\leq M_n\} < \eps/2. $$
Since the sets $X_{n,i}$ are compact, there exists $\delta>0$ such that
$$\dist(X_{n,i}, X_{n,j}) >\delta,$$
whenever $i,j\in \{1, \dots, M_n\}$ and  $X_{n,i} \cap X_{n,j} = \emptyset$.

Now suppose that $a,b\in X$ are arbitrary points with  $d_X(a,b)< \delta $.
We claim that then  $d_Y(f(a),f(b))<\e$. Indeed, we can find $i,j\in \{1, \dots, M_n\}$ such that $a\in X_{n,i}$ and $b\in X_{n,j}$. Since 
$d_X(a,b)< \delta$, we then necessarily have $X_{n,i}\cap  X_{m,j}\ne \emptyset$ by definition of $\delta$.
So   $Y_{n,i}\cap  Y_{n,j}\ne\emptyset$ by \eqref{eq:iff2}. Moreover, $f(a)\in f(X_{n,i})=Y_{n,i}$ and $f(b)\in f(X_{n,j})=Y_{n,j}$.
Hence  
$$d_Y(f(a), f(b)) \leq \diam (Y_{n,i}) + \diam (Y_{n,j}) < \e.$$
The continuity of $f$ follows. \qed
\end{proof}

 \section{Topology of trees}\label{sec:toptree} 
 
  In this section we fix some terminology and collect some general facts about trees. We do not claim any originality of this material. All of it is standard and well-known, but we did not try to track it down    in the literature. Our objective is to make our presentation self-contained, and to have convenient reference points   for future work. For general background on trees or dendrites we refer 
 to \cite[Chapter V]{Wh}, \cite[Section \S 51 VI]{Ku68},  \cite[Chapter~X]{Na}), and the literature mentioned there.

 An {\em arc} $\alpha$ in a metric space  is a homeomorphic image of the  unit interval $[0,1]\sub \R$.  The points corresponding to $0$ and $1$ are called the {\em endpoints} 
 of $\alpha$. 
 
 Let  $T$ be a tree. Then the last part of Definition~\ref{def:tree} is equivalent to the requirement that for all points $a,b\in T$ with $a\ne b$, there 
 exists a unique arc in $T$  joining $a$ and $b$, i.e., it has the endpoints 
 $a$ and $b$. We use the notation $[a,b]$  for this unique arc. 
 It is convenient to allow $a=b$ here. Then $[a,b]$ denotes the {\em degenerate arc} consisting  only of the point $a=b$. 
 Sometimes we want to remove one or both endpoints from the arc $[a,b]$. Accordingly, we define
  $(a,b)=[a,b]\ba\{a,b\}$,  $[a,b)=[a,b]\ba\{b\}$ and 
  $(a,b]=[a,b]\ba\{a\}$.   In Section~\ref{s:CSST} we will not use this notation for arcs in a tree. There $[a,b]$ will always denote the Euclidean line segment joining  two points $a,b\in \C$.  
  
  A metric space $X$ is called {\em path-connected} if any two points $a,b\in X$ can be joined by a path in $X$, i.e., there exists a continuous map $\gamma\:[0,1]\ra X$ such that $\gamma(0)=a$ and $\gamma(1)=b$. The space $X$ is {\em arc-connected} if any two distinct points in $X$ can be joined by an arc in $X$. The image 
  of  a path joining two distinct points in a metric space always contains an arc joining these points (this  follows from  
the fact that every Peano space is arc-connected; see \cite[Theorem 3.15, p.~116]{HY}). In particular, 
every path-connected metric space is arc-connected. 
 
 \begin{lemma}\label{l:uni_cont} Let $(T,d)$ be a tree. Then for each $\eps>0$ there exists $\delta>0$ such that for all $a,b\in T$ with $d(a,b)<\delta$ we have $\diam([a,b])<\eps$.
\end{lemma}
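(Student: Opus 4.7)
The plan is to combine two observations: (a) connected subsets of a tree are arc-convex, meaning that if $C\sub T$ is connected and $a,b\in C$, then $[a,b]\sub C$; and (b) the compact metric space $T$, being locally connected, admits arbitrarily small connected open neighborhoods at each point. Once (a) is in hand, the lemma follows from a short compactness argument.

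For (a), I would argue as follows. Suppose $a,b\in C$ with $a\ne b$ and assume for contradiction that there is some $x\in [a,b]\ba C$; in particular $x\ne a,b$. The key claim is that $a$ and $b$ lie in distinct connected components of $T\ba\{x\}$. Indeed, $T$ is a Peano continuum (compact, connected, locally connected), so the open set $T\ba\{x\}$ is itself locally connected and every connected open subset is arcwise connected. Thus if $a$ and $b$ were in the same component of $T\ba\{x\}$, there would be an arc in $T\ba\{x\}$ joining $a$ to $b$; this would give a second arc from $a$ to $b$ in $T$ distinct from $[a,b]$, contradicting the uniqueness clause in Definition~\ref{def:tree}. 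So $a$ and $b$ belong to different components of $T\ba\{x\}$, and since $C\sub T\ba\{x\}$ is connected and meets both, we reach a contradiction.

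For the main statement, I argue by contradiction. If the conclusion fails, there exist $\eps_0>0$ and sequences $(a_n),(b_n)\sub T$ with $d(a_n,b_n)\to 0$ and $\diam([a_n,b_n])\ge \eps_0$ for every $n$. Choose $x_n\in [a_n,b_n]$ with $d(a_n,x_n)\ge \eps_0/3$. By compactness of $T$, after passing to subsequences, $a_n\to p$, $b_n\to p$ (since $d(a_n,b_n)\to 0$), and $x_n\to q$ with $d(p,q)\ge \eps_0/3>0$. Using local connectedness, pick a connected open neighborhood $U$ of $p$ with $\diam(U)<\eps_0/3$. For all sufficiently large $n$, both $a_n$ and $b_n$ lie in $U$, so by (a) applied to the connected set $U$, we have $[a_n,b_n]\sub U$ and hence $x_n\in U$. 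But $d(p,q)\ge \eps_0/3>\diam(U)$ forces $q\notin \overline{U}$, contradicting $x_n\to q$.

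The principal obstacle is establishing (a): it is the only place where the defining property of a tree (uniqueness of arcs) must be brought to bear, and it requires knowing that the Peano property propagates from $T$ to the open set $T\ba\{x\}$ in a way that produces actual arcs rather than mere paths. The rest of the argument is a routine compactness extraction.
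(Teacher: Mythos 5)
Your proof is correct, but it takes a genuinely different route from the paper's. The paper applies the Hahn--Mazurkiewicz theorem to produce a surjection $\varphi\: [0,1]\ra T$, uses uniform continuity to cover $T$ by finitely many path-connected pieces $X_k=\varphi(I_k)$ of diameter $<\eps/2$, and sets $\delta$ to be a Lebesgue-type separation constant for the non-intersecting pieces; then for $d(a,b)<\delta$ the arc $[a,b]$ must lie in some $X_i\cup X_j$, giving the bound directly. This is a constructive argument producing an explicit $\delta$. You instead first establish the arc-convexity of connected subsets of $T$ (your step (a)) via the classical fact that open connected subsets of a Peano continuum are arcwise connected, combined with the uniqueness of arcs in a tree, and then run a sequential compactness argument by contradiction. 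Both proofs lean on a nontrivial Peano-continuum theorem -- the paper on Hahn--Mazurkiewicz, you on arc-connectivity of regions -- and both invoke arc uniqueness at the crucial moment. One structural difference worth noting: the paper deduces arc-convexity of connected sets later (Lemma~\ref{lem:subtree}), using the present lemma as an input, whereas you prove a version of it up front by a route that is independent of the present lemma, thereby avoiding circularity. Your approach is less quantitative but gives the auxiliary fact (a) as a byproduct; the paper's is more self-contained within the stated toolkit and directly yields the $\eps$--$\delta$ estimate.
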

\begin{proof} Fix $\eps>0$. Since $T$ is a compact, connected, and  locally connected  metric space, it is a {\em Peano space}. So by the Hahn-Mazurkiewicz theorem there exists a continuous surjective map $\varphi\:[0,1]\ra T$ of the unit interval onto $T$ \cite[Theorem 3.30, p.~129]{HY}. By uniform continuity of $\varphi$ we can represent $[0,1]$  as a union $[0,1]=I_1\cup \dots \cup I_n$ of finitely many 
closed intervals $I_1, \dots, I_n\sub [0,1]$  with $\diam(X_k)<\eps/2$,
where $X_k=  \varphi(I_k)$ 
for $k=1,\dots ,n$. The sets $X_k=\varphi(I_k) $ are compact. This implies that  there exists $\delta>0$ such that $\dist(X_i, X_j)>\delta$, whenever $i,j\in \{1,\dots, n\}$ and 
$X_i\cap X_j=\emptyset$. 

Now let $a,b\in T$ with $d(a,b)<\delta$ be arbitrary. We may assume $a\ne b$. 
Then there exist $i,j\in  \{1,\dots, n\}$ with $a\in X\coloneqq X_i$ and $b\in Y\coloneqq X_j$. 
By choice of $\delta$ we must have $X\cap Y\ne \emptyset$. As continuous images of intervals,
the sets $X$ and $Y$ are path-connected. Since $X\cap Y\ne \emptyset$, the union $X\cup Y$ that contains the points $a$ and $b$ is also path-connected. This implies that  $X\cup 
Y$ is arc-connected, and so there exists an arc $\alpha \sub X\cup Y$ with endpoints $a$ and $b$. 
The unique such arc in the tree $T$ is $[a,b]$, and so $[a,b]=\alpha\sub X\cup Y$. This implies 
$$ \diam ([a,b])\le \diam(X)+\diam(Y)<\eps,$$
as desired. \qed
 \end{proof}

 \begin{lemma} \label{lem:vartree} Let $(T,d)$ be a tree and $p\in T$. Then the following statements are true:
 \begin{itemize}
\item[\textnormal{(i)}]  Each  component $U$ of $T\ba\{p\}$ is an open and arc-connected subset of $T$.  

\smallskip 
\item[\textnormal{(ii)}] \ If  $U$ is a component of $T\ba\{p\}$, then $\overline U=U\cup\{p\}$ and  $\partial U=\{p\}$.

\smallskip  
\item[\textnormal{(iii)}] \ \ Two points $a,b\in T\ba\{p\}$ lie in the same component of $T\ba\{p\}$ if and only if $p\not\in [a,b]$. 
 \end{itemize}
  \end{lemma}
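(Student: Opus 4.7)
The plan is to prove the three parts in an order that lets later ones build on earlier ones. I begin with the easy direction of (iii): if $p\notin [a,b]$, then $[a,b]$ is a connected subset of $T\ba\{p\}$ containing both $a$ and $b$, so they lie in the same component. This observation will be the workhorse for the rest of the argument.

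For the openness statement in (i), I apply Lemma~\ref{l:uni_cont} with $\eps=d(a,p)>0$ for a fixed $a\in U$, obtaining $\delta>0$ such that $d(a,b)<\delta$ forces $\diam([a,b])<d(a,p)$. Then $p\notin [a,b]$, so by the easy direction of (iii) just established, $b$ lies in $U$; hence $B(a,\delta)\sub U$. For arc-connectedness, I fix $a\in U$ and consider $V=\{b\in U\colon [a,b]\sub U\}$. Since $a\in V$ and $U$ is connected, it suffices to show that $V$ is clopen in $U$. For openness, given $b\in V$, I use Lemma~\ref{l:uni_cont} again to find $\delta'>0$ such that $d(b,b')<\delta'$ implies $p\notin [b,b']$; then $[b,b']$ is a connected subset of $T\ba\{p\}$ meeting $U$, hence $[b,b']\sub U$, and so $[a,b']\sub [a,b]\cup [b,b']\sub U$, which gives $b'\in V$. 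For closedness, a symmetric argument using the closed non-empty set $[a,b]\cap (T\ba U)$ shows that $U\ba V$ is open.

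For (ii), any component $U$ is closed in $T\ba\{p\}$, so $\overline U\sub U\cup\{p\}$. If $p\notin\overline U$, then $U=\overline U$ is closed in $T$; combined with openness from (i), $U$ would be a non-empty proper clopen subset of the connected space $T$, which is impossible. Hence $\overline U=U\cup\{p\}$, and since $U$ is open, $\partial U=\overline U\ba U=\{p\}$. The remaining direction of (iii) follows at once: if $a$ and $b$ lie in the same component $U$, then by (i) there is an arc in $U$ joining them, and by the uniqueness of arcs in the tree $T$ this arc must coincide with $[a,b]$, which therefore avoids $p$.

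The step I expect to be the main obstacle is justifying the inclusion $[a,b']\sub [a,b]\cup [b,b']$ used above, since this is really the only place where the tree axioms do nontrivial work. The argument is again a uniqueness argument: the union $[a,b]\cup [b,b']$ is path-connected (two arcs sharing the endpoint $b$), hence contains an arc joining $a$ and $b'$ (every path in a Peano space contains an arc with the same endpoints, as already used in the proof of Lemma~\ref{l:uni_cont}); by uniqueness of arcs in the tree $T$ this arc must coincide with $[a,b']$, giving the required inclusion. Everything else is either bookkeeping or a direct application of Lemma~\ref{l:uni_cont}.
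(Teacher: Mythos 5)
Your proof is correct, and it takes a genuinely different route from the paper's in a couple of places. For openness of components in (i), the paper simply invokes the standard fact that in a locally connected space the components of an open set are open; you instead give a self-contained $\eps$--$\delta$ argument from Lemma~\ref{l:uni_cont} together with the easy direction of (iii), which is a bit longer but avoids citing that fact. For arc-connectedness of $U$, the paper works with \emph{path}-connectivity (defining an equivalence relation on $U$ and showing classes are open), which sidesteps arc concatenation, and then appeals to the Peano-space fact that path-connected implies arc-connected; you work directly with arcs and show $V=\{b\in U: [a,b]\sub U\}$ is clopen, which forces you to prove the inclusion $[a,b']\sub[a,b]\cup[b,b']$ --- you correctly flag this as the one place where the tree axiom does real work and prove it properly at the end. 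The biggest genuine divergence is in (ii): the paper proves $p\in \overline U$ by observing $[a,p)\sub U$, and proves $\overline U\sub U\cup\{p\}$ by noting the complement of $U\cup\{p\}$ is a union of open components; your argument instead gets $\overline U\sub U\cup\{p\}$ from relative closedness of a component, and gets $p\in\overline U$ by the clean connectedness contradiction (a proper nonempty clopen subset of $T$), which is slicker and uses less. Part (iii) matches the paper exactly. One small stylistic note: in the closedness step for $V$, the invocation of ``the closed non-empty set $[a,b]\cap(T\ba U)$'' is a detour --- the truly symmetric argument is to take $b\in U\ba V$, choose $\delta'$ so that $[b,b']\sub U$ for $d(b,b')<\delta'$, and observe that $[a,b']\sub U$ would then force $[a,b]\sub[a,b']\cup[b',b]\sub U$, contradicting $b\notin V$. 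That set plays no essential role.
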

 
\begin{proof} (i) The set $T\ba\{p\}$ is open. Since $T$ is locally connected,  each component $U$ of $T\ba\{p\}$ 
is also open.   

For $a,b\in U$ we write $a\sim b$ if $a$ and $b$ can be joined by a path in $U$. Obviously,
this defines an equivalence relation on $U$. The equivalence classes are open subsets
of $T$. To see this,  suppose  $a,b\in U$ can be joined by a path $\beta$ in $U$. Then for all points 
$x$ in a sufficiently small neighborhood $V\sub U $ of $b$ we have $[b,x]\sub U$ as follows from  Lemma~\ref{l:uni_cont}. 
So by  concatenating $\beta$ with (a parametrization of) the arc $[b,x]$, we obtain a
path $\beta'$ in $U$ that joins $a$ and $x\in V$. This shows that every point $b$ in the equivalence class of $a$ has a neighborhood $V$ that also belongs to this equivalence class. 

We see that the equivalence classes of $\sim$ partition $U$ into open sets. Since $U$ is connected, there can only be one such set. It follows that  $U$ is path-connected and 
hence also arc-connected.

\smallskip
(ii) Let $U$ be a (non-empty) component of $T\ba\{p\}$. We choose  a point $a\in U$. 
The set  $[a,p)$ is  connected, contained in $T\ba\{p\}$, and meets $U$ in $a$. Hence $[a,p)\sub U$. This implies that $p\in \overline U$. On the other hand, the set 
$U\cup \{p\}$ is closed, because its complement is a union of components of $T\ba\{p\}$ and hence open by (i). Thus  $\overline U=U\cup\{p\}$. By (i) no point in $U$ is a boundary point of $U$, and so $\partial U=\{p\}$.

\smallskip
(iii) If $a,b\in T\ba\{p\}$ and $p\not \in [a,b]$, then $[a,b]$ is a  connected subset of  
$T\ba\{p\}$. Hence $[a,b]$ lies in a component $U$ of $T\ba\{p\}$. In particular, $a,b\in [a,b]$ lie in the same component $U$ of $T\ba\{p\}$.

Conversely, suppose that $a,b\in T\ba\{p\}$ lie in the same component $U$ of 
$T\ba\{p\}$. We know by (i) that $U$ is arc-connected. Hence there exists a (possibly degenerate) arc $\alpha\sub U$ with endpoints $a$ and $b$. But the unique such arc in $T$ is $[a,b]$. Hence $[a,b]=\alpha\sub U\sub T\ba\{p\}$, and so $p\not\in [a,b]$. \qed
\end{proof} 

A subset $S$ of a tree $(T,d)$ is called a {\em subtree} of $T$ if $S$ equipped with the restriction of the metric $d$ is also a tree as in Definition~\ref{def:tree}. Every subtree $S$ of $T$ contains 
two points and hence a non-degenerate arc. In particular, every subtree $S$ of $T$ is an infinite, actually uncountable set.

The following statement characterizes 
 subtrees.

 \begin{lemma} \label{lem:subtree} Let $(T,d)$ be a tree. Then a set  
 $S\sub T$ is a subtree of $T$ if and only if $S$ contains at least two points and is  closed and connected.  
 \end{lemma}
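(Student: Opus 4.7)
My plan is to handle the two implications separately. The forward direction is essentially bookkeeping: if $S$ is a subtree, then by Definition~\ref{def:tree} it is compact, connected, and contains at least two points; compactness of $S$ inside the Hausdorff space $T$ gives closedness. The whole content of the lemma sits in the reverse direction.

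So assume $S \subseteq T$ is closed, connected, and contains at least two points. Compactness of $S$ is free because $T$ is compact and $S$ is closed. Connectedness is given. What remains is the tree structure: local connectedness and the existence and uniqueness of arcs. I would first establish the key \emph{arc-convexity} property: for any distinct $a,b \in S$, the unique arc $[a,b]$ of $T$ satisfies $[a,b] \subseteq S$. The argument is by contradiction. If some $p \in [a,b] \setminus S$ existed, then $p \notin \{a,b\}$, and Lemma~\ref{lem:vartree}(iii) forces $a$ and $b$ to lie in different components of $T \setminus \{p\}$. By Lemma~\ref{lem:vartree}(i) the component $U$ of $T \setminus \{p\}$ containing $a$ is open in $T$, and its complement $V$ inside $T \setminus \{p\}$ (a union of the remaining components) is open as well. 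Since $p \notin S$, we get $S = (S \cap U) \cup (S \cap V)$, a disjoint separation into nonempty relatively open subsets, contradicting the connectedness of $S$.

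Once arc-convexity is in hand, the two tree axioms drop out. Uniqueness of the arc joining $a,b$ in $S$ is inherited from $T$: any arc in $S$ is an arc in $T$, and in $T$ the arc joining $a$ and $b$ is unique, namely $[a,b]$, which lies in $S$. This gives both existence and uniqueness in $S$. For local connectedness, I would use Lemma~\ref{l:uni_cont}: given $x \in S$ and $\eps > 0$, choose $\delta > 0$ so that $d(a,b) < \delta$ implies $\diam([a,b]) < \eps$ in $T$. The set
\[
W = \bigcup_{y \in S,\, d(x,y) < \delta} [x,y]
\]
is contained in $S$ by arc-convexity, is path-connected (every point joined to $x$ through an arc in $W$), lies in the $\eps$-ball around $x$, and contains the relatively open neighborhood $B_S(x,\delta)$ of $x$. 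Hence $x$ has arbitrarily small connected neighborhoods in $S$, so $S$ is locally connected.

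The main obstacle, and really the only nontrivial step, is the arc-convexity claim $[a,b] \subseteq S$; everything else is either a direct consequence of the hypotheses or a packaging of Lemmas~\ref{l:uni_cont} and~\ref{lem:vartree}. The reason this step is the crux is that it converts the purely topological assumption (connectedness of $S$) into the combinatorial/geometric statement that $S$ is closed under taking arcs, which is what makes $S$ behave like a tree rather than merely like a closed connected set.
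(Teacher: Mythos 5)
Your proposal is correct and follows essentially the same route as the paper: derive arc-convexity of $S$ via Lemma~\ref{lem:vartree}(iii) and connectedness of $S$, inherit uniqueness of arcs from $T$, and prove local connectedness by forming the union of arcs $[a,x]$ with $x \in S \cap B(a,\delta)$ using Lemma~\ref{l:uni_cont}. The only cosmetic difference is that where the paper simply observes that a connected set must lie in a single component of $T \setminus \{p\}$, you spell out the same fact as a disconnection into two nonempty relatively open pieces.
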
 
 
 \begin{proof} 
 If $S$ is a subtree of $T$, then $S$ contains at least two points, and is connected and 
 compact. Hence  it is a closed subset of $T$. Conversely, suppose that $S$ contains at least two points and is closed and connected. Then $S$ is compact, because $T$ is compact.
 
 Suppose that $a,b\in S$, $a\ne b$, are two distinct points in $S$. 
 We consider the arc $[a,b]\sub T$. Suppose there exists a point 
 $p\in [a,b]$ with  $p\not\in S$. Then $p\ne a,b$, and so by Lemma~\ref{lem:vartree}~(iii), the points $a$ and $b$ lie in different components of $T\ba \{p\}$. This is impossible, because the connected set $S\sub T\ba \{p\}$ must be  contained in exactly one component of $T\ba \{p\}$. This shows that $[a,b]\sub S$ and so 
 the points $a$ and $b$  can be joined by an arc in $S$. This arc in $S$ is unique, because it is unique in $T$. 
 
 It remains to show that $S$ is  locally connected, i.e., every point in $S$ has \arbly\ small connected relative neighborhoods. To see this,   let $a\in S$ and $\eps>0$ be \arb. Then by Lemma~\ref{l:uni_cont}
 we can find $\delta>0$ such that $[a,x]\sub B(a,\eps)$ whenever 
 $x\in B(a,\delta)$. Now let $M$ be the union of all arcs $[a,x]$ with $x\in S\cap B(a,\delta)$. These arcs lie in $S$ and so $M$ is a connected set contained in $S\cap B(a,\eps)$. Moreover, 
 $S\cap B(a,\delta)\sub M$ and so $M$ is a connected relative 
(not necessarily open) neighborhood of $a$ in $S$. This shows that $S$ is locally connected. We conclude that $S$ is indeed a subtree of $T$.     \qed
\end{proof}

\begin{lemma} \label{lem:branch} Let $(T,d)$ be a tree, $p\in 
T$, and $U$ a component of $T\ba\{p\}$. Then $B=U\cup\{p\}$ is a subtree of $T$ and $p$ is a leaf of $B$.  \end{lemma}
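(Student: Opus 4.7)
My plan is to deduce both assertions directly from the preceding lemmas, specifically Lemma~\ref{lem:subtree} (which characterizes subtrees as closed connected subsets containing at least two points) and Lemma~\ref{lem:vartree} (which supplies the needed structural facts about components of $T\setminus\{p\}$).

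First, I would verify that $B = U \cup \{p\}$ is a subtree of $T$ by checking the three hypotheses of Lemma~\ref{lem:subtree}. Closedness is immediate from Lemma~\ref{lem:vartree}(ii), which states that $\overline{U} = U \cup \{p\} = B$. For connectedness, I note that $U$ is connected by definition (as a component of $T\setminus\{p\}$), and since $B = \overline{U}$, the connectedness of $B$ follows from the standard fact that the closure of a connected set is connected. Finally, $B$ contains at least two points because $U$ is a nonempty component of $T\setminus\{p\}$ (components are nonempty by convention), so it contains some point $a$, and $a \ne p$ since $p \notin T\setminus\{p\} \supseteq U$. Thus Lemma~\ref{lem:subtree} applies and $B$ is a subtree.

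Second, I would verify that $p$ is a leaf of $B$, i.e., $\nu_B(p) = 1$. Observe that $B\setminus\{p\} = U$, and $U$ is connected, so it consists of exactly one component. By the definition of valence applied to the subtree $B$ at the point $p$, this gives $\nu_B(p) = 1$, which is precisely what it means for $p$ to be a leaf.

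There is no real obstacle in this argument; both claims reduce to one-line invocations of Lemma~\ref{lem:vartree} and Lemma~\ref{lem:subtree}, together with the basic fact that the closure of a connected subset of a topological space is again connected. The only minor point worth spelling out is that $U$ being a component of $T\setminus\{p\}$ is nonempty by construction, which ensures $B$ is nondegenerate and hence qualifies as a tree under Definition~\ref{def:tree}.
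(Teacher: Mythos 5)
Your proof is correct and follows essentially the same route as the paper: deduce closedness and connectedness from Lemma~\ref{lem:vartree}, note $B$ has at least two points, invoke Lemma~\ref{lem:subtree}, and conclude $p$ is a leaf since $B\setminus\{p\}=U$ is connected. The only cosmetic difference is that the paper cites Lemma~\ref{lem:vartree}(i) for connectedness of $U$ whereas you observe it is a component and hence connected by definition; both are fine.
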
 

\begin{proof} It follows from Lemma~\ref{lem:vartree} (i) and (ii) 
that  the set  $U$ is connected and  that $B= U\cup\{p\}=\overline U$. This implies that $B$ is  closed and connected. Since $U\ne \emptyset$ and $p\not\in U$, the set $B$ contains at least two points. Hence $B$ is  a subtree of $T$ by Lemma~\ref{lem:subtree}. Since $B\ba\{p\}=U$ is connected, $p$ is a leaf of $B$.   \qed
\end{proof} 

If the subtree $B=U\cup\{p\}$ is as in the previous lemma, then we call $B$  a {\em branch} of $p$ in $T$ (or just a branch of $p$ if $T$ is understood).  

\begin{lemma}\label{lem:subbranches}
Let $(T,d)$ be a tree, $S\sub T$ be a subtree of $T$, and $p\in S$.
Then every branch $B'$ of $p$ in $S$ is contained in a unique branch $B$ of $p$ in $T$. The  
 assignment $B'\mapsto B$ is an injective map between the  sets of branches of $p$  in $S$ and in $T$. If $p$ is an interior point of $S$, then this map is a bijection. \end{lemma}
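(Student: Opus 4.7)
My plan is to prove the three claims in order: existence of the containing branch $B$, uniqueness of $B$, and then bijectivity under the interior-point hypothesis. Throughout I will use freely that for any two points $a,b\in S$ the arc $[a,b]\sub T$ joining them actually lies in $S$ (this was established inside the proof of Lemma~\ref{lem:subtree}), and that consequently this arc is the unique arc joining $a$ and $b$ in the subtree $S$ as well.

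For existence and uniqueness of the containing branch, I would start from $B'=U'\cup\{p\}$, where $U'$ is a component of $S\ba\{p\}$ (using Lemma~\ref{lem:branch} applied to the tree $S$). Since $U'\sub T\ba\{p\}$ is connected, it is contained in exactly one component $U$ of $T\ba\{p\}$, and $B\coloneqq U\cup\{p\}$ is then the unique branch of $p$ in $T$ containing $B'$. This defines the assignment $B'\mapsto B$.

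For injectivity, suppose $B'_1=U'_1\cup\{p\}$ and $B'_2=U'_2\cup\{p\}$ are branches of $p$ in $S$ that both map to $B=U\cup\{p\}$, so $U'_1\cup U'_2\sub U$. Pick $a\in U'_1$ and $b\in U'_2$; since both points lie in the single component $U$ of $T\ba\{p\}$, Lemma~\ref{lem:vartree}(iii) applied to $T$ gives $p\notin[a,b]$, where $[a,b]$ is the unique arc in $T$. But $[a,b]\sub S$ is also the unique arc joining $a$ and $b$ in the subtree $S$, so Lemma~\ref{lem:vartree}(iii) applied to $S$ forces $a$ and $b$ to lie in the same component of $S\ba\{p\}$. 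Hence $U'_1=U'_2$ and $B'_1=B'_2$.

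For surjectivity when $p$ is an interior point of $S$, choose a neighborhood $V$ of $p$ in $T$ with $V\sub S$. Given any branch $B=U\cup\{p\}$ of $p$ in $T$, Lemma~\ref{lem:vartree}(ii) gives $p\in\overline U$, so $V\cap U\ne\emptyset$. Pick $a\in V\cap U\sub S\ba\{p\}$; then $a$ lies in some component $U'$ of $S\ba\{p\}$, yielding a branch $B'=U'\cup\{p\}$ of $p$ in $S$. Since $U'$ is connected, contained in $T\ba\{p\}$, and meets $U$ at the point $a$, it must lie in the component $U$, so $B'\sub B$ and $B'$ maps to $B$. The only real subtlety in the whole argument is the identification of arcs in $S$ with arcs in $T$ used for injectivity, which is exactly what makes the valence of $p$ ``visible'' simultaneously from both trees; once that is in hand the rest is routine.
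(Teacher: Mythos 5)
Your proof is correct and follows essentially the same approach as the paper: existence/uniqueness of the containing branch via connectedness, injectivity via Lemma~\ref{lem:vartree}(iii) applied in both $T$ and $S$ (you phrase it directly, the paper contrapositively), and surjectivity via the interior-point hypothesis. The only cosmetic difference is that for surjectivity you invoke $p\in\overline{U}$ directly rather than picking a point on the arc $[a,p)$ near $p$, but the two routes are equivalent.
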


In particular, if under the given assumptions $\nu_T(p)$ is the valence of $p$ in $T$ and $\nu_S(p)$ the valence of $p$ in $S$, then $\nu_S(p)\le \nu_T(p)$. Here we have equality if $p$ is an interior point of $S$. 

If $p\in S$ is a leaf of $T$, then $T$ has only one branch $B$ at $p$, namely $B=T$. Hence  $1\le \nu_S(p)\le \nu_T(p)\le 1$, and so  $\nu_S(p)=1$. This means that $p$ is also a leaf of $S$. More informally, we can say that the property of a point 
being a leaf in $T$ is passed to subtrees that contain the point.

\begin{proof} If $B'$ is a branch of $p$ in $S$, then $B'=U'\cup \{p\}$, where $U'$ is a component of $S\ba\{p\}$. Then $U'$ is a connected subset of $T\ba\{p\}$ and so contained in a unique component $U$ of $T\ba\{p\}$. Then $B=U\ \cup \{p\}$ is a branch of $p$ in $T$ with $B'\sub B$ and it is clear that $B$ is the unique such branch.

To show injectivity of the map $B'\mapsto B$, let   $B_1'$ and $B_2'$ be two distinct branches of $p$ in $S$. 
Pick points $a\in B_1'\ba\{p\}$ and $b\in B_2'\ba \{p\}$. Then $a$ and $b$ lie in different components of $S\ba\{p\}$ and so $p\in [a,b]$ by Lemma~\ref{lem:vartree}~(iii) applied to the tree  $S$. 
Hence $a$ and $b$ lie in different components of $T\ba\{p\}$, and so in different branches of $p$ in $T$. This implies that 
$B_1' $ and $B_2'$ must be contained  in different branches of $p$ in $T$. This shows that the map $B'\mapsto B$ is indeed injective. 

Now assume in addition that $p$ is an interior point of $S$. 
To show surjectivity of the map $B'\mapsto B$, we consider a branch $B$ of $p$ in $T$. Pick a point $a\in B\ba\{p\}$. Then $[a,p)\sub B\ba\{p\}$, because $B$ is a subtree of $T$. Since $p$ is an interior point 
of $S$, there exists a point $x\in [a,p)$ close enough to $p$ such that $x\in S\ba\{p\}$. If $B'$ is the unique branch of $p$ in $S$ that contains $x$, then we have $x\in B'\cap B$. This  implies $B'\sub B$. Hence the map $B'\mapsto B$ is also surjective, and so a bijection.  \qed
\end{proof} 

\begin{lemma}\label{lem:tripcrit}
Let $(T,d)$ be a tree, $p,a_1,a_2,a_3\in T$ with $p\ne a_1, a_2, a_3$ and suppose that the sets $[a_1,p)$, $[a_2,p)$, $[a_3,p)$
are pairwise disjoint. 
Then  the points $a_1,a_2,a_3$ lie in different components of $T\ba\{p\}$ and $p$ is a branch point of $T$.  
\end{lemma}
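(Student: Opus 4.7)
The plan is to reduce the conclusion to Lemma~\ref{lem:vartree}(iii), which says that $a_i$ and $a_j$ lie in different components of $T\ba\{p\}$ precisely when $p\in[a_i,a_j]$. So I will fix $i\ne j$ in $\{1,2,3\}$ and show directly that the unique arc $[a_i,a_j]$ in $T$ passes through $p$. Once this is established for all three pairs, the points $a_1,a_2,a_3$ lie in three distinct components of $T\ba\{p\}$, giving $\nu_T(p)\geq 3$, so $p$ is a branch point of $T$.

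The main step is the identification $[a_i,a_j]=[a_i,p]\cup[p,a_j]$. First, from the hypothesis $[a_i,p)\cap[a_j,p)=\emptyset$ I immediately get
\[
[a_i,p]\cap[a_j,p]=\bigl([a_i,p)\cap[a_j,p)\bigr)\cup\{p\}=\{p\}.
\]
Pick homeomorphisms $\varphi_1\:[0,1]\to[a_i,p]$ with $\varphi_1(0)=a_i$ and $\varphi_1(1)=p$, and $\varphi_2\:[0,1]\to[p,a_j]$ with $\varphi_2(0)=p$ and $\varphi_2(1)=a_j$. Concatenating yields a continuous map $\varphi\:[0,2]\to[a_i,p]\cup[p,a_j]$ defined by $\varphi(t)=\varphi_1(t)$ on $[0,1]$ and $\varphi(t)=\varphi_2(t-1)$ on $[1,2]$. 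Since the two arcs meet only at $p$, the map $\varphi$ is a bijection, and being a continuous bijection from a compact space to a Hausdorff space, it is a homeomorphism. Hence $[a_i,p]\cup[p,a_j]$ is an arc in $T$ with endpoints $a_i$ and $a_j$. By the uniqueness of arcs in the tree $T$, this arc must coincide with $[a_i,a_j]$, and in particular $p\in[a_i,a_j]$.

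The only step that requires care is the verification that the concatenation is an arc. The continuity and the ``compact to Hausdorff'' homeomorphism argument are routine, but bijectivity relies crucially on the fact that the two constituent arcs meet only at the shared endpoint $p$, which is precisely where the disjointness hypothesis $[a_i,p)\cap[a_j,p)=\emptyset$ is used. Everything else is an immediate application of Lemma~\ref{lem:vartree}(iii) and the definition of the valence $\nu_T(p)$.
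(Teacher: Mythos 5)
Your proof is correct and follows essentially the same route as the paper's: both arguments establish that $[a_i,p]\cup[p,a_j]$ is an arc (by using the disjointness hypothesis to show the two subarcs meet only at $p$), invoke uniqueness of arcs to conclude $p\in[a_i,a_j]$, and then apply Lemma~\ref{lem:vartree}(iii). The only difference is that you spell out the concatenation/homeomorphism argument that the paper takes for granted, and you treat all three pairs explicitly where the paper handles $(a_1,a_2)$ and then invokes a ``similar argument'' for $a_3$.
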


\begin{proof} The arcs $[a_1,p]$ and $[a_2,p]=[p, a_2]$ have only the point $p$ in common. So   their union  $[a_1,p]\cup [p, a_2]$ 
is an arc and this arc must be equal to $[a_1,a_2]$. Hence $p\in [a_1,a_2]$ which by Lemma~\ref{lem:vartree}~(iii) implies that $a_1$ and $a_2$ lie in different components of $T\ba\{p\}$. A similar argument shows that $a_3$ must be contained in a component of 
$T\ba\{p\}$ different from the components containing $a_1$ and  $a_2$. In particular, $T\ba\{p\}$ has at least three components and so $p$ is a branch point of $T$. The statement follows.  \qed
\end{proof} 

\begin{lemma}\label{triple_dense}
Let $(T,d)$ be a tree such that the branch points of $T$ are dense in $T$. If $a,b \in T$ with $a\ne b$, then there exists a branch  point $c\in (a,b)$.
\end{lemma}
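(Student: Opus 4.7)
The plan is to use the density hypothesis to locate a branch point $q$ of $T$ close to an interior point of $[a,b]$, and then either take $c\coloneqq q$ directly or ``project'' $q$ onto $[a,b]$ to produce the desired branch point $c\in (a,b)$.

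Concretely, I would first pick any $m\in (a,b)$, which exists because $[a,b]$ is a non-degenerate arc, and set $\eps\coloneqq \tfrac13\min\{d(m,a),d(m,b)\}>0$. Applying Lemma~\ref{l:uni_cont} I obtain $\delta\in (0,\eps]$ such that $d(x,y)<\delta$ implies $\diam([x,y])<\eps$. By density of branch points, there is a branch point $q$ of $T$ with $d(m,q)<\delta$. If $q\in [a,b]$, then $d(m,q)<\eps<d(m,a)$ and similarly $d(m,q)<d(m,b)$, so $q\neq a,b$; hence $q\in (a,b)$ and we take $c\coloneqq q$.

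Otherwise $q\notin [a,b]$, and I would project $q$ onto $[a,b]$ as follows. Consider the set $[m,q]\cap [a,b]$: it is closed, contains $m$, and is connected, because any two of its points are joined by a unique arc of $T$ which (by the uniqueness clause in Definition~\ref{def:tree}) must coincide with a subarc of both $[m,q]$ and $[a,b]$ and hence lies in the intersection. So $[m,q]\cap [a,b]$ is a closed connected subarc of $[a,b]$ containing $m$, necessarily of the form $[m,c]$ with $c\in [m,q]$. Since $q\notin [a,b]$ we have $c\neq q$; the maximality of $[m,c]$ inside the intersection forces $[c,q]\cap [a,b]=\{c\}$; and $d(m,c)\le \diam([m,q])<\eps$ yields $c\in (a,b)$.

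Finally, I would verify that $c$ is a branch point of $T$ by applying Lemma~\ref{lem:tripcrit} with $p=c$ and $a_1=a$, $a_2=b$, $a_3=q$. Since $c$ separates $a$ and $b$ on $[a,b]$, the subarcs $[a,c]$ and $[b,c]$ meet only at $c$, so $[a,c)\cap [b,c)=\emptyset$; and because $[c,q]\cap [a,b]=\{c\}$, the half-open arc $[q,c)$ is disjoint from $[a,b]\setminus\{c\}$, hence from both $[a,c)$ and $[b,c)$. Lemma~\ref{lem:tripcrit} then delivers that $c$ is a branch point of $T$, finishing the proof. The main obstacle is the projection step: showing that $[m,q]\cap [a,b]$ is genuinely a subarc (which depends crucially on unique-arc connectivity in $T$) and then checking the disjointness hypotheses of Lemma~\ref{lem:tripcrit}. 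Note that $q$ being a branch point is not actually used in the second case---only $q\in T\setminus [a,b]$ matters---so density is used just to guarantee that a convenient approximating point near $m$ exists at all.
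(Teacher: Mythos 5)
Your proof is correct and follows essentially the same route as the paper: pick an interior point of $[a,b]$, locate a nearby branch point via density and the uniform-continuity lemma, then (if it misses $[a,b]$) project it onto $[a,b]$ and verify the three half-open arcs are disjoint so Lemma~\ref{lem:tripcrit} applies. The only cosmetic difference is that you characterize the projection $c$ as the far endpoint of the arc $[m,q]\cap[a,b]$, whereas the paper describes it as the first hit of $[a,b]$ traveling along $[p,x_0]$; these are the same point.
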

\begin{proof}
We pick a point $x_0 \in (a,b)\ne \emptyset$. 
Then $x_0$ has positive distance to both $a$ and $b$. This and Lemma~\ref{l:uni_cont} imply that we can find $\delta>0$ such that 
for all $x\in B(x_0, \delta)$ the arc $[x,x_0]$ has uniformly small diameter and so does not contain $a$ or $b$.   
  
Since branch  points are dense in $T$, we can find a branch  point $p\in B(x_0,\delta)$. Then $a,b\not\in [p,x_0]$. If $p\in (a,b)$, we are done.

In the other case, we have  $p\not \in (a,b)$. 
 If we travel from $p$ to $x_0\in (a,b)$ along $[p,x_0]$, we meet 
 $[a,b]$ in a first point 
$c\in (a,b)$. Then $a,b,p\ne c$. Moreover, the sets $[a,c)$, $[b,c)$, $[p,c)$ are pairwise disjoint. Hence $c\in (a,b)$ is a branch  point of $T$ as follows from 
 Lemma~\ref{lem:tripcrit}.   \qed
\end{proof}

\begin{lemma}\label{lem:brfinite} Let $(X,d)$ be a compact, connected,  and locally connected metric space, $J$ an index set,
$p_i\in T$, and $U_i$ a  component of $X\ba\{p_i\}$ for 
each $i\in J$. Suppose that 
$$U_i\cap U_j=\emptyset $$ for all $i,j\in J$, $i\ne j$.  Then  $J$ is a countable set. If there exists $\delta>0$ such that 
$\diam(U_i)>\delta$ for each $i\in J$, then $J$ is finite.   
\end{lemma}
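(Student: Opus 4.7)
The plan is to treat the two assertions separately. For countability, the key observation is that each $U_i$ is actually an \emph{open} subset of $X$: it is a connected component of the open set $X\setminus\{p_i\}$, and in a locally connected space such components are open. Once this is established, countability follows from a standard separability argument. Namely, as a compact metric space, $X$ is separable, so it contains a countable dense set $D$; each non-empty open $U_i$ must meet $D$, and since the $U_i$ are pairwise disjoint, the resulting choice function $J\to D$ is injective. Hence $J$ is countable.

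For the second statement, I would argue by contradiction and extract a configuration forcing two of the $U_i$ to overlap. Suppose $\diam(U_i)>\delta$ for every $i\in J$ and that $J$ is infinite. Pick a sequence of distinct indices $i_n\in J$ and, for each $n$, points $a_n,b_n\in U_{i_n}$ with $d(a_n,b_n)>\delta$. By sequential compactness of $X$, after passing to a subsequence I may assume $a_n\to a$, $b_n\to b$, and $p_{i_n}\to p$ for some $a,b,p\in X$. Since $d(a,b)\ge\delta>0$, at least one of $a,b$ differs from $p$; without loss of generality, $a\ne p$.

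The main step is now to produce a single connected open set that lies in $U_{i_n}$ for \emph{infinitely many} $n$. Set $r\coloneqq d(a,p)/3>0$. By local connectedness of $X$, I can choose a connected open neighborhood $W$ of $a$ contained in $B(a,r)$. Because $a_n\to a$ and $p_{i_n}\to p$, for all sufficiently large $n$ we have $a_n\in W$ and $d(p_{i_n},W)>0$, in particular $p_{i_n}\notin W$. Thus $W$ is a connected subset of $X\setminus\{p_{i_n}\}$ containing $a_n$, so $W$ must be contained in the component of $X\setminus\{p_{i_n}\}$ through $a_n$, which is $U_{i_n}$. Applying this to any two distinct large indices $n\ne m$ gives $\emptyset\ne W\subseteq U_{i_n}\cap U_{i_m}$, contradicting the assumed disjointness.

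The main obstacle is precisely this last step: one needs local connectedness to obtain a \emph{connected} neighborhood $W$ of $a$ that survives the removal of the moving point $p_{i_n}$, and one needs the separation $a\ne p$ to guarantee that $W$ can be chosen to avoid $p_{i_n}$ for all large $n$. Once these two ingredients are combined, the disjointness hypothesis immediately yields the contradiction, completing the proof of finiteness.
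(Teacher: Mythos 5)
Your proof is correct. For the finiteness assertion, your argument is essentially the same in spirit as the paper's: both exploit local connectedness to produce a small connected neighborhood around an accumulation point that must then lie inside two distinct sets $U_i$, contradicting their disjointness. The bookkeeping differs slightly: the paper selects a single point $x_i\in U_i$ with $d(x_i,p_i)\ge\delta/2$ and takes an accumulation point of $\{x_i\}$, while you select two points $a_n,b_n\in U_{i_n}$ separated by more than $\delta$ and use sequential compactness to simultaneously extract limits of $a_n$, $b_n$, and $p_{i_n}$. Your trick of noting that $a\ne b$ forces at least one of them to differ from $p$ replaces the paper's explicit choice of a point far from $p_i$; both serve the same purpose of separating the connected neighborhood from the excised points.

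For the countability assertion, your route is genuinely different from, and arguably cleaner than, the paper's. You invoke separability of a compact metric space directly: the $U_i$ are nonempty open sets (being components of an open set in a locally connected space), pairwise disjoint, and each must meet a fixed countable dense subset, giving an injection into that countable set. The paper instead first shows $\diam(U_i)>0$ (ruling out isolated points via connectedness), then writes $J=\bigcup_n J_n$ where $J_n=\{i:\diam(U_i)>1/n\}$, and applies the finiteness result to each $J_n$. Your version is shorter and relies only on separability plus openness of the $U_i$; the paper's version has the small advantage that it derives the countability statement as an immediate corollary of the finiteness statement without invoking any further topological machinery.
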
 

Informally, the space $X$ cannot contain a ``comb" with too many long teeth.

\begin{proof} We prove the last statement first. We argue by contradiction and assume that    $\diam(U_i)>\delta>0$ for each $i\in J$, where $J$ is an infinite index set.  Then we can choose a point $x_i\in U_i$ such that $d(x_i,p_i)\ge \delta/2$. 
The set $A=\{x_i:i\in J\}$ is infinite and so it must have a limit point $q\in X$, because $X$ is compact.   Since $X$ is locally connected,
there exists a connected neighborhood $N$ of $q$ such that 
$N\sub B(q,\delta/8)$. Since $q$ is a limit point of $A$, the set $N$ contains infinitely many points in $A$. In particular, we can find 
$i,j\in J$ with $x_i,x_j\in N$ and $i\ne j$. 
Then 
$$\dist(p_i, N)\ge  d(p_i, x_i)-\diam(N)\ge \delta/2-\delta/4>0,$$
and so $N\sub X\ba\{p_i\}$. Since the  connected set $N$ meets  $U_i$ in the point $x_i$, this implies that $N\sub U_i$. Similarly, $N\sub U_j$. This is impossible, because we have $i\ne j$ and so $U_i\cap U_j\ne \emptyset$, while $\emptyset\ne N\sub U_i\cap U_j$.

To prove the first statement, note that $\diam(U_i)>0$ for each $i\in J$. Indeed, otherwise  $\diam(U_i)=0$ for some $i\in J$. Then  $U_i$ consists of only one point $a$. Since $X$ is locally connected, the component $U_i$ of $X\ba\{p_i\}$ is an open set. So $a$ is an isolated point of $X$. This is impossible, because the metric space $X$ is connected and so it does not have isolated points. 

Now we write  $J=\bigcup_{n\in \N}J_n$, where 
$J_n$ consists of all $i\in J$ such that $\diam(U_i)>1/n$. Then each set $J_n$ is finite by the first part of the proof. This implies that $J$ is countable.  \qed
 \end{proof}

We  can apply the previous lemma to a tree $T$ 
and choose for each $p_i$ a fixed branch point $p$ of $T$. Then it follows that $p$ can have at most countably many distinct 
complementary components $U_i$ and hence there are only countably many 
distinct branches $B_i=U_i\cup\{p\}$ of  $p$. Moreover, since $\diam(B_i)=\diam(\overline U_i)=\diam(U_i)$, there can only be  finitely many of these branches whose diameter  exceeds a given positive number $\delta>0$. In particular, we can label the branches of $p$ by numbers $n=1,2,3,\dots$ so that 
$$ \diam(B_1)\ge \diam(B_2)\ge \diam(B_3)\ge \dots\, . $$
 We now set 
 \begin{equation}\label{eq:height}
 H_T(p)=\diam(B_3)
 \end{equation}  and call $H_T(p)$  the {\em height} of the branch point $p$ in $T$. So the height of a branch point $p$ is the diameter of the third largest branch of $p$.  


\begin{lemma} \label{finite} Let $(T,d)$ be a tree and 
$\delta>0$. Then  there are at most  finitely many  branch  points $p\in T$ with height $H_T(p)>\delta$.
 \end{lemma}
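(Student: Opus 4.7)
The strategy is proof by contradiction, designed to reduce the statement to Lemma~\ref{lem:brfinite}. Given branch points $p_1,p_2,\dots$ with $H_T(p_i)>\delta$, one would like to associate to each $p_i$ a component $U_i$ of $T\ba\{p_i\}$ of diameter $>\delta$ so that the $U_i$ are pairwise disjoint; Lemma~\ref{lem:brfinite} would then force the family to be finite. The naive choice of one of the three branches of diameter $>\delta$ at $p_i$ is insufficient, because a large branch of $p_i$ may contain another candidate branch point $p_j$. I therefore work with an arbitrary finite subfamily $p_1,\dots,p_n$ and bound $n$ by a constant depending only on $\delta$ and $T$.

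The first step is to form the convex hull $\Sigma:=\bigcup_{1\le i<j\le n}[p_i,p_j]$. It is closed, connected, and contains more than one point, so by Lemma~\ref{lem:subtree} it is a subtree of $T$. If $x\in\Sigma$ is distinct from all the $p_i$, then $x$ lies in the interior of some arc $[p_j,p_k]$, and Lemma~\ref{lem:vartree}(iii) yields $\nu_\Sigma(x)\ge 2$; thus every leaf of $\Sigma$ belongs to $\{p_1,\dots,p_n\}$. Because $\Sigma$ is a finite union of arcs it carries the structure of a finite simplicial tree, and an elementary edge-degree count gives
\[
\#\{x\in\Sigma : \nu_\Sigma(x)\ge 3\}\le L-2,
\]
where $L$ denotes the number of leaves of $\Sigma$.

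Next I partition the indices into $A=\{i:\nu_\Sigma(p_i)\le 2\}$ and $B=\{i:\nu_\Sigma(p_i)\ge 3\}$. The key claim, which follows by combining Lemma~\ref{lem:subbranches} with Lemma~\ref{lem:vartree}(iii), is that the branches of $p_i$ in $T$ that contain some other $p_j$ correspond bijectively to the branches of $p_i$ in $\Sigma$; in particular their number equals $\nu_\Sigma(p_i)$. Hence for $i\in A$ at most two branches of $p_i$ in $T$ contain other $p_j$'s, and among the three branches of diameter $>\delta$ provided by $H_T(p_i)>\delta$ at least one meets none of the $p_j$; let $U_i$ denote the associated open component of $T\ba\{p_i\}$. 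If $z\in U_i\cap U_j$ for distinct $i,j\in A$, then $p_j\notin U_i$ and $p_i\notin U_j$, so Lemma~\ref{lem:vartree}(iii) forces $p_i\in[z,p_j]$ and $p_j\in[z,p_i]$ simultaneously, which is impossible unless $p_i=p_j$. Hence the $U_i$, $i\in A$, are pairwise disjoint components of diameter $>\delta$, and Lemma~\ref{lem:brfinite} yields $|A|\le N$ for some $N=N(\delta,T)$. Since every leaf of $\Sigma$ corresponds to an index in $A$, we have $L\le|A|\le N$, and the bound above then gives $|B|\le L-2\le N-2$, so $n=|A|+|B|\le 2N-2$. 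As this bound is independent of $n$, the assumption of infinitely many such $p_i$ is contradicted.

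The main obstacle I expect is establishing the bijective correspondence asserted in the third paragraph: that a branch of $p_i$ in $T$ meets $\Sigma\ba\{p_i\}$ exactly when it contains some other $p_j$. The nontrivial direction requires showing that if a branch $B$ of $p_i$ contains an interior point $z$ of an arc $[p_j,p_k]\sub\Sigma$, then $p_i$ cannot simultaneously separate $z$ from both $p_j$ and $p_k$---otherwise a short application of Lemma~\ref{lem:vartree}(iii) forces $p_i=z$, contradicting $z\in B\ba\{p_i\}$---so at least one of $p_j$ or $p_k$ lies in $B$. Once this correspondence is in hand, the rest of the argument is essentially bookkeeping.
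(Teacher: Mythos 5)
Your approach is genuinely different from the paper's, and in outline it is attractive: rather than extracting an infinite disjoint family of branches by a limit-point argument (which is what the paper does), you work with a finite subfamily, form its convex hull, and use a degree count to force a large antichain of indices to which Lemma~\ref{lem:brfinite} can be applied. The bookkeeping in the convex hull $\Sigma$ is essentially correct: $\Sigma$ is a subtree by Lemma~\ref{lem:subtree}, leaves of $\Sigma$ are among the $p_i$, the bijection between branches of $p_i$ in $\Sigma$ and branches of $p_i$ in $T$ containing some $p_j$ follows from Lemma~\ref{lem:subbranches} together with the separation argument you sketch, the disjointness of the chosen $U_i$ for $i\in A$ is correct, and the inequality $|B|\le L-2\le|A|-2$ gives $n\le 2|A|-2$. (You should, though, supply the short inductive argument that the convex hull of finitely many points in a tree is a finite simplicial tree with finitely many branch points; you assert this as elementary, and it is, but it is not in the paper.)

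The genuine gap is the step ``Lemma~\ref{lem:brfinite} yields $|A|\le N$ for some $N=N(\delta,T)$.'' Lemma~\ref{lem:brfinite} asserts only that \emph{a given} family of pairwise disjoint components of diameter $>\delta$ is finite; it provides no bound $N$ that is uniform over all such families. Since your set $A=A_n$ depends on the finite subfamily $\{p_1,\dots,p_n\}$, the mere finiteness of each $A_n$ does not bound $n$: a priori $|A_n|$ could grow without bound as $n$ does, and you would never reach a contradiction. To close the gap you need a quantitative version of Lemma~\ref{lem:brfinite}, which can be obtained as follows. If $U_1,\dots,U_k$ are pairwise disjoint components of $T\ba\{p_i\}$ with $\diam(U_i)>\delta$, choose $x_i\in U_i$ with $d(x_i,p_i)\ge\delta/2$; by Lemma~\ref{l:uni_cont} there is $\delta'>0$ (depending only on $\delta$ and $T$) with the property that $d(x,y)<\delta'$ implies $\diam([x,y])<\delta/4$, hence $B(x_i,\delta')\cap T\sub U_i$, hence $d(x_i,x_j)\ge\delta'$ for $i\ne j$; by total boundedness of the compact tree $T$, the number of $\delta'$-separated points is bounded by a finite constant $N=N(\delta',T)$. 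With that supplement your argument becomes complete, and it is arguably more elementary than the paper's inductive construction inside a carefully chosen branch of a limit point, at the cost of needing this quantitative estimate and the finite-simplicial-tree fact.
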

\begin{proof} We argue by contradiction and assume that this is not true.
Then the  set $E$ of branch  points $p$ in $T$ with $H_T(p)>\delta$ has  infinitely many  
elements. Since $T$ is compact, the set $E$  has a limit point   $q\in T$. 

\smallskip
{\em Claim.} There exists a branch $Q$ of $q$ 
 such that  the set $E\cap Q$ is infinite and has  $q$ as a limit point.
  
  \smallskip
 Otherwise, $q$ has infinitely many distinct branches  $Q_n$, $n\in \N$, that  contain a point $a_n\in  E  \cap (Q_n\ba\{q\})$. Then $a_n$ is a branch point with $H_T(a_n)> \delta$ which implies 
 that $a_n$ has at least three branches whose diameters exceed $\delta$.  At least one of them does not contain $q$. If we denote such a  branch of $a_n$ by  $V_n$, then $V_n$  is a connected subset of  $T\ba\{q\}$.  It meets $Q_n\ba\{q\}$, because $a_n\in (Q_n\ba\{q\})\cap V_n$. It follows that $V_n\sub Q_n$ and so $\diam(Q_n)\ge \diam (V_n)>\delta$. Since the branches $Q_n$  of $q$ are all 
 distinct for $n\in \N$, this contradicts Lemma~\ref{lem:brfinite} (see the discussion after the proof of this lemma).
 The Claim follows.
 
 \smallskip    
 We fix a branch $Q$ of $q$ as in the Claim. 
For each $n\in \N$ we will now inductively construct branch points  $p_n\in E\cap (Q\ba\{q\})$  together with a branch $B_n$ of  $p_n$ and an auxiliary compact set $K_n\sub T$. 
They will satisfy the following conditions for each $n\in \N$: 
\begin{itemize}
\item[\textnormal{(i)}] $\diam(B_n)>\delta$,

\smallskip 
\item[\textnormal{(ii)}] \ the sets $B_1, \dots, B_n$ are disjoint, 

\smallskip 
\item[\textnormal{(iii)}] \ \ the set $K_n$ is compact and connected, 
and $$B_1\cup \dots \cup B_n\sub K_n \sub Q\ba \{q\}.$$  
\end{itemize}

We pick  an arbitrary branch  point $p_1\in E\cap (Q\backslash \{q\})$ to start. Then we can choose a branch  $B_1$ of $p_1$ that does not contain $q$ and satisfies $\diam(B_1)>\delta$. We set $K_1=B_1$. Then $K_1$ is a compact and connected set 
that does not contain $q$ and meets $Q$, because $p_1\in K_1\cap Q$. Hence $K_1 \sub Q\ba \{q\}$. 

Suppose for some  $n\in \N$, a branch point  $p_k\in E\cap Q$, a branch $B_k$ of $p_k$, and a set 
$K_k$ with the properties (i)--(iii) have been chosen for all $1\le k\le n$. 

Since $q\not\in K_n$, we have $\dist(q,K_n)>0$, and so we can find  a branch  point $p_{n+1}\in E\cap (Q\ba\{p\})$ sufficiently close to $q$ such that 
$p_{n+1}\not\in K_n$. This is possible, because $q$ is a limit point of $E\cap (Q\ba\{q\})$.  Since the set $K_n\sub T\ba\{p_{n+1}\}$ is connected, it must be contained in a branch of $p_{n+1}$. Since there are three branches 
of $p_{n+1}\ne q$ whose diameters exceed $\delta$, we can pick one of them that contains neither $q$ nor $K_n$. Let $B_{n+1}$ be such a branch of $p_{n+1}$. Then $\diam(B_{n+1})>\delta$ and so (i) is true for $n+1$.  We have  
$B_{n+1}\cap K_n=\emptyset$; so (iii) shows that  $B_{n+1}$ is disjoint from the previously chosen disjoint sets $B_1,\dots, B_n$. This gives (ii).

Since $p_n, p_{n+1}\in Q\ba \{q\}$, the arc $[p_n,p_{n+1}]$ does not contain $q$ (see Lemma~\ref{lem:vartree}~(iii)). We also have  $p_n\in B_n\sub K_n$ and $p_{n+1}\in B_{n+1}$, which implies that the  
 set $K_{n+1}\coloneqq K_{n} \cup [p_n, p_{n+1}]\cup B_{n+1}\sub  Q\ba \{q\}$ is compact and connected. We  have  $$B_1\cup \dots \cup B_n\cup B_{n+1}\sub K_n \cup B_{n+1} \sub K_{n+1} \sub
 Q\ba \{q\},$$
and so $K_{n+1}$ has  property (iii). 

 Continuing with this process, 
 we  obtain  disjoint branches $B_n$ for all $n\in \N$ that  satisfy (i). 
The last part of Lemma~\ref{lem:brfinite} implies  that this is impossible and we get a contradiction.  \qed
\end{proof}

\section{Basic properties of the  continuum self-similar tree} \label{s:CSST}

We now  we study the  properties of  the continuum self-similar tree (CSST). Unless otherwise specified, all metric notions in this section refer to the Euclidean metric on the complex plane $\C$. In this section, $i$ always denotes the imaginary unit and we do not use this letter for indexing as in the other sections.   If $a,b\in \C$ we denote by $[a,b]$ the Euclidean line segment in $\C$ joining $a$ and $b$. We also use the usual notation for open or half-open line segments. 
So $[a,b)=[a,b]\ba\{b\}$, etc. 

%
%
%


For the proof of  Proposition~\ref{CSSTex} we consider a coding 
procedure of certain points in the complex plane by words in an alphabet. We first fix some terminology related to this. 
We consider a non-empty set $\mathcal{A}$. Then we call $\mathcal{A}$ an {\em alphabet} and refer to the elements in $\mathcal{A}$ as the {\em letters}  in this alphabet. In this paper 
we will only use alphabets of the form  
$\mathcal{A}=\{1,2, \dots ,m\}$ with $m\in \N$, $m\ge 3$.  
We consider the set 
$W(\mathcal{A})\coloneqq \mathcal{A}^{\mb{N}}$ of infinite sequences in $\mathcal{A}$  as the set of {\em infinite words} in the alphabet $\mathcal{A}$ and write the elements $w\in	 W(\mathcal{A})$ in the form $w=w_1w_2\ldots$, where it is understood that $w_k\in 
\mathcal{A}$ for $k\in \N$. Similarly,  we set 
$W_n(\mathcal{A})\coloneqq \mathcal{A}^n$ and consider 
$W_n(\mathcal{A})$ as the set of all words in the alphabet 
$\mathcal{A}$ of length $n$. We write the elements $w\in W_n(\mathcal{A})$ in the form $w=w_1\dots w_n$ with $w_k\in  \mathcal{A}$ for $k=1, \dots , n$. 
We use the convention that $W_0( \mathcal{A})=\{\emptyset\}$
and consider the only element $\emptyset$ in $W_0( \mathcal{A})$ as the {\em empty word} of length $0$.  Finally, 
$$W_*(\mathcal{A})\coloneqq \bigcup_{n\in \N_0}  W_n(\mathcal{A})$$ is the set of all words of finite length. 
If $u=u_1\dots u_n$ is a finite word and $v=v_1v_2\dots $ is a finite or infinite word in the alphabet $\mathcal{A}$, then  we denote by $uv=u_1\dots u_nv_1v_2\dots$ the word obtained by concatenating $u$ and $v$. We call $u$ an {\em initial segment} and $v$ a {\em tail} of the word $w=uv$.  If the alphabet $\mathcal{A}$ is understood, then we will simply drop $\mathcal{A}$ from the notation. So $W$ will denote the set of infinite words in $\mathcal{A}$, etc. 

For the rest of this section, we use the alphabet $\mathcal{A}=\{1,2,3\}$. So when we  write  $W$, $W_n$, $W_*$ it is understood that $\mathcal{A}=\{1,2,3\}$ is the underlying alphabet. 
There exists a unique metric $d$ on $W=\{1,2,3\}^{\N}$ with the following property. If we have two words $u=u_1u_2\ldots$ and  
$v=v_1v_2\ldots$ in $W$ and $u\ne v$, then for some  $n\in \N_0$ we have  $u_1=v_1$, \dots, $u_{n}=v_{n}$,  and $u_{n+1}\ne v_{n+1}$.  Then $d(u,v)=1/2^n$. More informally, two elements $u,v\in W$ are close in this metric precisely if  they share a large 
number of initial letters. The metric space $(W, d)$ is compact and homeomorphic to a Cantor set. 

If $n\in \N_0$ and $w=w_1w_2\dots w_n\in W_n$, we define 
$$f_w \coloneqq  f_{w_1}\circ f_{w_2}\circ \cdots \circ f_{w_n},$$
 where we use the maps in \eqref{maps} in the composition.  By convention, 
 $f_\emptyset= \text{id}_\C$ is the identity map on $\C$.
 Note that $f_w$ is a Euclidean similarity on $\C$ that scales Euclidean distances by the factor $2^{-n}$. If $a,b\in \C$, then 
 $f_w([a,b])=[f_w(a),f_w(b)]$. We will use this repeatedly in the following.

Throughout this section we denote by $H\sub \C$  the (closed) convex hull of the four points 
$1$, $i$, $-1$, and $\tfrac{1}{2}-\frac{i}{2}$ (see Figure~\ref{f:Hsets}). We set $H_k=f_k(H)$ for $k=1,2,3$. Then 
$$H_1\cup H_2 \cup H_3 =f_1(H)\cup f_2(H) \cup f_3(H)\sub H.$$
This implies that 
\begin{equation}\label{Hinv}
f_w(H)\sub H
\end{equation}  for all 
$w\in  W_{\ast}$.

 \begin{figure}
 \begin{overpic}[ scale=0.7
    ]{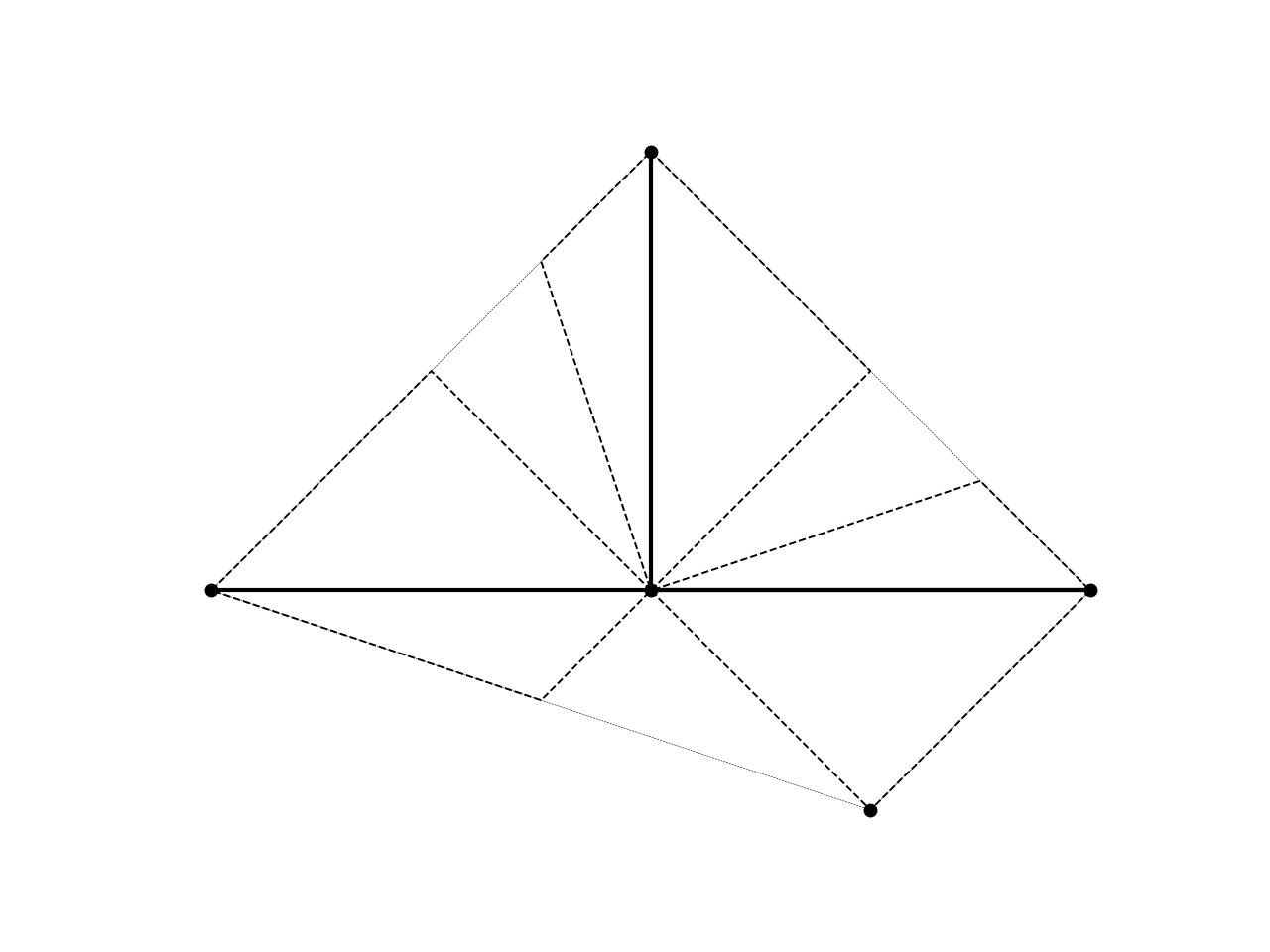}
    \put(50,65){ $i$}
    \put(14,25){ $-1$}
     \put(85,25){ $1$}
       \put(85,25){ $1$}
          \put(70, 10){ $-\frac 12 +\frac i2$}
            \put(50, 25){ $0$}
             \put(30, 35){ $H_1$}
                \put(70, 20){ $H_2$}
                  \put(55, 50){ $H_3$}
              \put(40, 25){ $I_1$}
               \put(60, 25){ $I_2$}
                \put(53, 37){ $I_3$}
                  \put(75, 30){ $I$}
               \put(35, 15){ $H$}
  \end{overpic}
  \vspace{-1cm}
\caption{Ilustration of some associated sets.}
\label{f:Hsets}
\end{figure}

\begin{lemma} \label{piex} There exists a well-defined continuous map 
$\pi\: W\ra \C$  given  by
  $$\pi(w)=\lim_{n\to \infty} f_{w_1w_2\dots w_n}(z_0)$$
  for $ w=w_1w_2\ldots\in W $ and $z_0\in \C$.
Here the limit exists and is independent of the choice of $z_0\in \mb{C}$. 
 \end{lemma}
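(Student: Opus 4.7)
The plan is to use the Cantor intersection theorem applied to the nested family of compact sets $K_n(w):=f_{w_1\dots w_n}(H)$ associated to a fixed infinite word $w=w_1w_2\ldots\in W$. The invariance \eqref{Hinv} is exactly what makes the nesting work: since $f_{w_{n+1}}(H)\sub H$, applying $f_{w_1\dots w_n}$ to both sides yields
\[ K_{n+1}(w)=f_{w_1\dots w_n}\bigl(f_{w_{n+1}}(H)\bigr)\sub f_{w_1\dots w_n}(H)=K_n(w).\]
Each $K_n(w)$ is compact and non-empty (continuous image of the compact non-empty set $H$), and because $f_{w_1\dots w_n}$ is a Euclidean similarity with ratio $2^{-n}$ we have $\diam(K_n(w))=2^{-n}\diam(H)\to 0$. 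By Cantor's intersection theorem $\bigcap_{n\in\N}K_n(w)$ consists of exactly one point, which I would define to be $\pi(w)$.

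Next I would verify that $\pi(w)=\lim_{n\to\infty}f_{w_1\dots w_n}(z_0)$ for every $z_0\in\C$, independently of the choice. The case $z_0\in H$ is immediate: then $f_{w_1\dots w_n}(z_0)\in K_n(w)$ and $\pi(w)\in K_n(w)$, so
\[|f_{w_1\dots w_n}(z_0)-\pi(w)|\le \diam(K_n(w))=2^{-n}\diam(H)\to 0.\]
For a general $z_0\in\C$, pick any auxiliary point $z_1\in H$ and use the fact that $f_{w_1\dots w_n}$ scales distances by $2^{-n}$ to estimate
\[|f_{w_1\dots w_n}(z_0)-f_{w_1\dots w_n}(z_1)|=2^{-n}|z_0-z_1|\to 0,\]
so combining the two bounds with the triangle inequality gives convergence to $\pi(w)$. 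This simultaneously proves that the limit exists and that its value does not depend on $z_0$.

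Finally, for continuity I would use the explicit form of the metric on $W$. If $w,w'\in W$ share the first $n$ letters, then both $\pi(w)$ and $\pi(w')$ lie in the common set $K_n=f_{w_1\dots w_n}(H)$, so
\[|\pi(w)-\pi(w')|\le \diam(K_n)=2^{-n}\diam(H).\]
Since $d(w,w')\le 2^{-n}$ whenever $w$ and $w'$ share at least $n$ initial letters, this inequality gives a quantitative modulus of continuity (in fact Lipschitz continuity of $\pi$ with respect to the ultrametric $d$). The argument uses only \eqref{Hinv} and the scaling property of Euclidean similarities, so I expect no serious obstacles; the only point that requires minor care is handling $z_0\notin H$, which is resolved by the Lipschitz-comparison step above.
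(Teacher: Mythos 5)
Your proof is correct, and it takes a genuinely different route from the paper's. The paper establishes existence of the limit by a direct telescoping estimate showing that the sequence $\{f_{w_1\dots w_n}(z_0)\}_{n\in\N}$ is Cauchy for any fixed $z_0$, then verifies independence of $z_0$ by the scaling property, and only brings in the compact set $H$ (via \eqref{Hinv} and \eqref{pifrel}) at the final step to control $|\pi(u)-\pi(v)|$ and deduce continuity. You instead apply Cantor's intersection theorem to the nested compacts $K_n(w)=f_{w_1\dots w_n}(H)$ right from the start, defining $\pi(w)$ as the unique point of $\bigcap_n K_n(w)$, and then derive the limit formula, its independence of $z_0$, and continuity all from the single estimate $\diam(K_n(w))=2^{-n}\diam(H)$. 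The two approaches are essentially dual: the paper's is slightly more self-contained at the existence step (no auxiliary compact set needed until the end) and emphasizes the Cauchy structure, while yours gives a more unified picture where one family of nested sets controls everything, and as a bonus makes the Lipschitz modulus of continuity of $\pi$ with respect to the ultrametric $d$ completely explicit. Both are standard and fully rigorous; the only small thing worth noting is that your version silently uses $\pi(w)\in K_n(w)$ for every $n$, which indeed follows immediately from the intersection definition but is the analogue of the paper's observation $\pi(W)\sub H$, so nothing is being skipped.
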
 
 
The existence of such a map $\pi$ is standard in similar  contexts 
(see, for example,  \cite[Section 3.1, pp.~426--427]{Hu81}).
 In the following, $\pi\: W\ra \C$ will always denote the map provided by this lemma.
 
 \begin{proof} Fix $z_0\in \C$. Then 
 there exists a constant $C\ge 0$ such that 
 $$ |z_0-f_k(z_0)| \le C$$ for $k=1,2,3$.
 If  $n\in \N_0$ and $u\in W_n$, then 
 $$ |f_u(a)-f_u(b)|=\frac 1{2^{n}} |a-b|$$ for all $a,b\in \C$. 
 This implies that if $w=w_1w_2\ldots\in W$, $n\in \N$,  and $u\coloneqq w_1w_2\dots w_n\in W_n$, then 
 \begin{align*}
 |f_{w_1w_2\dots w_{n}}(z_0)-f_{w_1w_2\dots w_{n+1}}(z_0)|&
 =|f_u(z_0) -f_u(f_{w_{n+1}}(z_0))|\\
 &=\frac 1{2^{n}} | z_0-f_{w_{n+1}}(z_0)|\le \frac C{2^{n}}.
 \end{align*} 
 It follows  that  $\{f_{w_1w_2\dots w_{n}}(z_0)\}_{n\in \N}$ is a Cauchy sequence in $\C$. Hence this sequence converges and 
 $$\pi(w)=\lim_{n\to \infty} f_{w_1w_2\dots w_n}(z_0)$$
 is well-defined for each $w=w_1w_2\ldots\in W$. 
 
 The limit does not depend on the choice of $z_0$. Indeed, if $z'_0\in \C$ is another point, then 
 $$ |f_{w_1w_2\dots w_n}(z_0)-f_{w_1w_2\dots w_n}(z'_0)|
 =\frac1{2^{n}} |z_0-z'_0|,$$ which implies that 
  $$\lim_{n\to \infty} f_{w_1w_2\dots w_n}(z_0)=
  \lim_{n\to \infty} f_{w_1w_2\dots w_n}(z'_0). $$
  
  The definition of $\pi$ shows  that if $w=w_1w_2\ldots\in W$ and $n\in \N_0$, then 
    \begin{equation} \label{pifrel}\pi(w)=\pi(w_1w_2\ldots)=f_{w_1\dots w_n}(\pi(w_{n+1}w_{n+2}\ldots)). 
    \end{equation}
    If we pick $z_0\in H$, then \eqref{Hinv} and the definition of $\pi$ imply  that $\pi(W)\sub H$.  If we combine this with \eqref{pifrel}, then we see that if two words $u,v\in W$ start with the same letters 
    $w_1,\dots,  w_n$, then 
    $$ |\pi(u)-\pi(v)|\le  \diam(f_{w_1\dots w_n}(H))=\frac1{2^{n}} \diam(H).$$ 
    The continuity of the map $\pi$ follows from this and the definition of the metric $d$ on $W$. \qed
 \end{proof} 
 We can now establish  the result that is the basis of the definition of 
 the CSST. Again arguments along these lines are completely standard. 
  
\begin{proofof}{\em Proof of Proposition~\ref{CSSTex}. } Let  $\pi\: W\ra \C$ be  the map provided by  Lem\-ma~\ref{piex} and  define $\mb{T}=\pi(W)\sub \C$. Since $W$ is compact and $\pi$ is 
 continuous, the set $\mb{T}$ is non-empty and compact. The relation \eqref{Tinv} immediately follows from \eqref{pifrel} for $n=1$. Note  that \eqref{Tinv} implies that
 \begin{equation}\label{Tinv2}
 f_w( {\mb{T}})=f_{w1}( {\mb{T}})\cup f_{w2}( {\mb{T}})\cup f_{w3}( {\mb{T}})
\end{equation}
for each $w\in W_n$, $n\in \N_0$.  From this in turn we deduce that 
\begin{equation}\label{Tinv3}
 \bigcup_{w\in W_n} f_{w}( {\mb{T}})=  {\mb{T}} 
\end{equation}
for each $n\in \N_0$.

 It remains to show the uniqueness of $\mb{T}$. Suppose $\widetilde {\mb{T}}\sub \C$ is another non-empty compact set satisfying the analog of   \eqref{Tinv}. Then the analogs of 
 \eqref{Tinv2} and \eqref{Tinv3} are also valid for $\widetilde {\mb{T}}$.  
 This and  the definition of 
 $\pi$ using  a point $z_0\in \widetilde {\mb{T}}$ imply that $\mb{T} =\pi(W)\sub
  \widetilde {\mb{T}}$. 
  
  For the converse inclusion, let $a\in  \widetilde {\mb{T}}$ be arbitrary. Using the relation 
  \eqref{Tinv2}  for the set   $\widetilde {\mb{T}}$, we can inductively  construct an infinite word $w_1w_2\ldots \in W$ such that 
   $ a\in f_{w_1w_2\dots w_n}(\widetilde {\mb{T}}) $
   for all $n\in \N$. Since 
  $$ \diam( f_{w_1w_2\dots w_n}(\widetilde {\mb{T}}))=\frac1{2^n}
   \diam( \widetilde {\mb{T}})\to 0 \text{ as $n\to \infty$},$$
   the definition of $\pi$ (using a point $z_0\in \widetilde {\mb{T}}$) implies that 
   $a=\pi(w)$. In particular, $a\in \pi(W)=\mb{T}$, and so $\widetilde {\mb{T}}\sub \mb{T}$. The uniqueness of $\mb{T}$ follows.  \qed
 \end{proofof}
 In the proof of the previous proposition we have seen that 
$ \mb{T} =\pi(W)$. If $p\in \Te$ and $p=\pi(w)$ for some $w\in W$, then we say that the word $w$ {\em represents} $p$. 

The following statement provides some  geometric descriptions of $ \mb{T} $.

\begin{proposition}\label{prop:CSSTgeo} Let $I=[-1,1] \sub \C$. 
For $n\in \N_0$ define 
\begin{equation*}
 J_n= \bigcup_{w\in W_n} f_w(I) \quad \text{and} \quad K_n= \bigcup_{w\in W_n} f_w(H) .
\end{equation*}
 Then the sets $J_n$ and $K_n$ are compact and   satisfy  
\begin{equation}\label{treeincl} J_{n}\subseteq J_{n+1}\sub \mb{T} \sub K_{n+1}\sub K_n
\end{equation}
for $n\in \N_0$.
Moreover, we have  
\begin{equation}\label{treerep} 
 \overline {\bigcup_{n\in \N_0}J_n}=  \mb{T}=\bigcap_{n\in \N_0}K_n. 
\end{equation}
\end{proposition}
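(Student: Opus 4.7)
My plan is to obtain the chain \eqref{treeincl} by applying the contractions $f_w$ to a short list of already-established relations, and then to deduce \eqref{treerep} from two approximation arguments, one constructive and one via a compactness argument on the word space $W$. Compactness of $J_n$ and $K_n$ is automatic since each is a finite union of continuous images of the compact sets $I$ and $H$. For $K_{n+1}\subseteq K_n$ I apply $f_w$, $w\in W_n$, to the already-established inclusion $H_1\cup H_2\cup H_3\subseteq H$ and take the union over $w\in W_n$. The inclusion $J_n\subseteq J_{n+1}$ follows the same pattern once I verify by direct computation that $f_1(I)=[-1,0]$, $f_2(I)=[0,1]$, and $f_3(I)=[0,i]$, so that $I=f_1(I)\cup f_2(I)\subseteq f_1(I)\cup f_2(I)\cup f_3(I)$. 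For $\Te\subseteq K_{n+1}$ I combine \eqref{Tinv3} with the fact $\Te=\pi(W)\subseteq H$ recorded in the proof of Lemma~\ref{piex}.

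The only non-routine inclusion in \eqref{treeincl} is $J_{n+1}\subseteq\Te$, for which it suffices to prove $I\subseteq\Te$; then for each $w\in W_n$, $f_w(I)\subseteq f_w(\Te)\subseteq\Te$ by \eqref{Tinv3}. Given $t\in I=[-1,1]$ I will produce a representing word $v\in\{1,2\}^{\N}\subseteq W$ with $\pi(v)=t$ by a binary-expansion procedure: set $v_1=1$ if $t\in[-1,0]$ and $v_1=2$ otherwise, pass to $t^{(1)}=f_{v_1}^{-1}(t)\in[-1,1]$, and iterate. Then $t=f_{v_1\cdots v_n}(t^{(n)})$ with $t^{(n)}\in I$, and since $f_{v_1\cdots v_n}$ contracts by the factor $2^{-n}$, the sequence $f_{v_1\cdots v_n}(z_0)$ for any $z_0\in I$ converges to $t$, forcing $\pi(v)=t$.

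For \eqref{treerep}, the inclusions $\overline{\bigcup_n J_n}\subseteq\Te$ and $\Te\subseteq\bigcap_n K_n$ are immediate from \eqref{treeincl} and the closedness of $\Te$. The reverse inclusion $\Te\subseteq\overline{\bigcup_n J_n}$ is also immediate: for $z=\pi(w)\in\Te$ and any $z_0\in I$, the defining sequence $f_{w_1\cdots w_n}(z_0)$ lies in $J_n$ and converges to $z$. The main obstacle will be the remaining inclusion $\bigcap_n K_n\subseteq\Te$, which I plan to handle by a compactness argument on $W$. Given $z\in\bigcap_n K_n$, the sets $\widehat S_n=\{w\in W:z\in f_{w_1\cdots w_n}(H)\}$ are non-empty and closed (in fact, clopen as finite unions of cylinder sets) in the compact metric space $W$, and they are nested because $f_a(H)\subseteq H$ implies $f_{w_1\cdots w_{n+1}}(H)\subseteq f_{w_1\cdots w_n}(H)$. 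By compactness, $\bigcap_n\widehat S_n\neq\emptyset$, and for any $w$ in this intersection both $z$ and $\pi(w)$ lie in $f_{w_1\cdots w_n}(H)$ for every $n$, the second containment following from \eqref{pifrel} together with $\pi(W)\subseteq H$. Since $\diam(f_{w_1\cdots w_n}(H))=2^{-n}\diam(H)\to 0$, we conclude $z=\pi(w)\in\Te$.
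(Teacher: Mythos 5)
Your proof is correct, but it takes a genuinely different route from the paper at the two non-routine steps. For $J_{n+1}\subseteq\Te$ (equivalently the first identity in \eqref{treerep}), the paper avoids any concrete computation: it observes that $\widetilde{\mb T}:=\overline{\bigcup_n J_n}$ is a non-empty compact set satisfying the invariance relation $\widetilde{\mb T}=f_1(\widetilde{\mb T})\cup f_2(\widetilde{\mb T})\cup f_3(\widetilde{\mb T})$ and invokes the uniqueness clause of Proposition~\ref{CSSTex}, so $\widetilde{\mb T}=\Te$ and $J_n\subseteq\Te$ falls out as a byproduct. You instead prove $I\subseteq\Te$ directly via a binary-expansion algorithm producing a representing word in $\{1,2\}^{\N}$ and then push forward with the $f_w$; this is more explicit and never touches the uniqueness statement. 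For $\bigcap_n K_n\subseteq\Te$, the paper picks, independently for each $n$, a word $u_n\in W_n$ with $a\in f_{u_n}(H)$, sets $z_n=f_{u_n}(0)\in J_n\subseteq\Te$, and concludes from $|z_n-a|\le 2^{-n}\diam(H)$ and compactness of $\Te$. You instead run a compactness argument on the word space $W$, using that the sets $\widehat S_n$ are non-empty, nested, and clopen to extract a single coherent $w\in W$ with $z\in f_{w_1\cdots w_n}(H)$ for all $n$, and then show $\pi(w)=z$. The paper's version is shorter and reuses $J_n\subseteq\Te$; yours is a bit longer but more self-contained, since it does not rely on the $J_n$ part being established first. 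Both are valid; the trade is brevity versus transparency and independence of the two halves of \eqref{treerep}.
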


As we will discuss more towards the end of this section, the first identity 
in \eqref{treerep} represents $\Te$ as the closure of a union of 
an ascending  sequence of trees as mentioned in the introduction.
We will not need   the second  identity  
in \eqref{treerep} in the following, but included it to show that 
$\Te$ can also be obtained as the intersection of a natural decreasing sequence of compacts sets. This is how many other fractals are constructed.  

\begin{proof} It is clear that the sets $J_n$ and $K_n$ as defined in the statement are compact for each $n\in \N_0$. Set $I_k=f_k(I)$ for $k=1,2,3$. Then an elementary geometric consideration shows that 
(see Figure~\ref{f:Hsets})
$$I\sub I_1\cup I_2\cup I_3 \sub H_1\cup H_2 \cup H_3
 \sub {H}.$$ This in turn implies that 
\begin{align*}
 f_w(I) &\sub f_{w1}(I)\cup  f_{w2}(I)\cup  f_{w3}(I)\\
 &\sub 
 f_{w1}(H)\cup  f_{w2}(H)\cup  f_{w3}(H)\sub  f_w(H) 
\end{align*} 
for each $w\in W_n$, $n\in \N_0$. Taking the union over all $w\in W_n$, we obtain 
\begin{equation}\label{impincl}
 J_n\sub J_{n+1} \sub K_{n+1} \sub K_{n}
\end{equation}
for all $n\in \N_0$. 
The set $\widetilde {\mb{T}}= \overline {\displaystyle \bigcup_{n\in \N_0}J_n}$ is non-empty, compact, and satisfies
\begin{align*}
\bigcup_{k=1,2,3} f_k(\widetilde {\mb{T}})&= 
\bigcup_{k=1,2,3} f_k\biggl( \overline { \bigcup_{n\in \N_0}J_n }
\biggr)=  \bigcup_{k=1,2,3}  \overline {  f_k\biggl( \bigcup_{n\in \N_0}J_n\biggr) } \\
&  =\overline {  \bigcup_{k=1,2,3}   f_k\biggl( \bigcup_{n\in \N_0}J_n\biggr)}= \overline {   \bigcup_{n\in \N_0} \bigcup_{k=1,2,3}   f_k(J_n)}\\
& = \overline {   \bigcup_{n\in \N_0} J_{n+1} } = \overline {   \bigcup_{n\in \N_0} J_{n} }  =\widetilde {\mb{T}}.
\end{align*} 
Hence $\widetilde {\mb{T}}=\mb{T}$ by the uniqueness statement in Proposition~\ref{CSSTex}.  So we have the first equation in \eqref{treerep}.

Since $0\in H$, we have $f_w(0)\in f_w(H)\sub K_n$ for each 
$w\in W_n$. Since the sets $K_n$ are compact and nested, this implies that for each $w=w_1w_2\ldots \in W$ we have 
$$ \pi(w)=\lim_{n\to \infty} f_{w_1\dots w_n}(0)\in \bigcap_{n\in \N_0}
K_n. $$ 
It follows that $\mb{T} =\pi(W)\sub \displaystyle \bigcap_{n\in \N_0} K_n$. 

To show the reverse inclusion, let $a\in  \displaystyle \bigcap_{n\in \N_0} K_n$
be arbitrary. Then $a\in K_n$ for each $n\in \N_0$, and so there is a word $u_n\in W_n$ such that $a\in f_{u_n}(H)$.
Define $z_n= f_{u_n}(0)\in J_n \sub \mb{T}$.
Since $0\in H$, we  have $z_n\in  f_{u_n}(H)$, and so 
$$ |z_n-a|\le \diam(f_{u_n}(H))=\frac{1}{2^n} \diam(H). $$
Hence $z_n\to a$ as $n\to \infty$. Since $z_n\in \mb{T} $ and $\mb{T}$ is compact, it follows that $a\in \mb{T}$.
We see that  $\displaystyle \bigcap_{n\in \N_0} K_n\sub \mb{T}$.
So the second equation in \eqref{treerep} is also valid.

The inclusions  \eqref{treeincl} follow from \eqref{treerep} and
\eqref{impincl}.  \qed
 \end{proof} 

For a finite word $u\in W_*$ we define 
\begin{equation}\label{eq:tiles}
\Te_u\coloneqq f_u(\Te)\sub \Te.
\end{equation}
Note that $\Te_\emptyset=\Te$. 
Since $\Te=\pi(W)$ and $f_u(\pi(v))=\pi(uv)$ whenever $u\in W_*$ and $v\in W$ (see \eqref{pifrel}), the set $\Te_u$ consists precisely of the points  $a\in \Te$ that can be 
  represented in the form $a=\pi(w)$ with a word $w\in W$ that has
  $u$ has an initial segment. This implies that if $v\in W_*$   is a finite word  with the initial segment 
  $u\in W_*$, then $\Te_v\sub \Te_u$.  
  
  It follows from  \eqref{Tinv2}  that 
$$  \Te_u=\Te_{u1}\cup\Te_{u2}\cup \Te_{u3} $$ for each $u\in W_*$
and from \eqref{Tinv3} that    
\begin{equation}\label{eq:union1}
\displaystyle \Te=\bigcup_{u\in W_n}\Te_u 
 \end{equation}
for each $n\in \N_0$.

Since $I=[-1,1]\sub \mb{T}\sub {H}$ (as follows from Proposition~\ref{prop:CSSTgeo}) and $\diam (I) = \diam (H) =2$, we have  $\diam(\mb{T})=2.$  If $n\in \N_0$ and $u\in W_n$, then $f_u$ is a similarity map that scales  distances by the factor $1/2^n$. Hence 
\begin{equation}\label{eq:diam}
\diam(\Te_u)=2^{1-n}. 
\end{equation}

We have  $0=f_1(1)=f_2(-1)=f_3(-1)$. This implies 
\begin{equation}\label{eq:baseincl} 
0\in \Te_k= f_k(\Te)\sub f_k(H)=H_k
\end{equation}
for $k=1,2,3$. 
If $k,\ell\in\{1,2,3\}$ and $k\ne \ell$, then (see Figure~\ref{f:subtrees})
\begin{equation}\label{e:TjTk0}
{H}_k \cap {H}_\ell=\{0\}, \text{ and so } \Te_k \cap \Te_\ell=\{0\}. 
\end{equation}

\begin{figure}
 \vspace{-1cm}
 \begin{overpic}[ scale=0.7
    ]{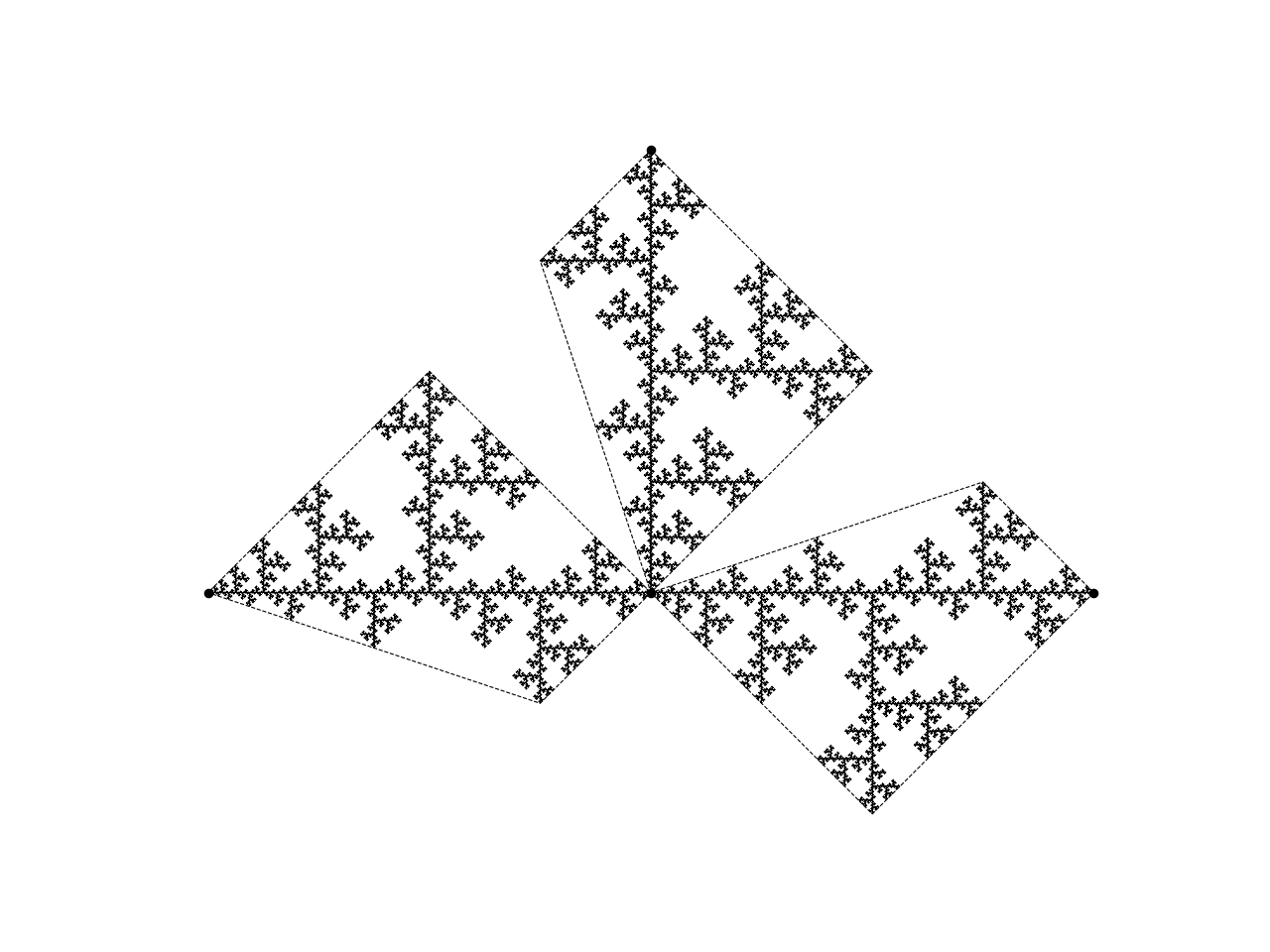}
    \put(50,65){ $i$}
    \put(14,25){ $-1$}
     \put(85,25){ $1$}
       \put(85,25){ $1$}
          \put(19, 38){ $H_1$}
            \put(78,15){ $H_2$}
              \put(63,55){ $H_3$}
            \put(50, 25){ $0$}
             \put(32, 23.5){ $\Te_1$}
                \put(67.5, 31.5){ $\Te_2$}
                  \put(45.5, 45){ $\Te_3$}
                          \end{overpic}
  \vspace{-1cm}
\caption{The CSST $\Te$ and its subtrees $\Te_1$, $\Te_2$, 
$\Te_3$.}
\label{f:subtrees}
\end{figure}

  The next lemma   provides a criterion  when two infinite words in $W$ represent the same point in $\mb{T}$ under the map $\pi$. Here we use the notation 
  $\dot{k}$ for  the infinite word $kkk\dots$ for $k\in \{1,2,3\}$.
 
     \begin{lemma}\label{l:0_triple} 
  \begin{itemize}
 \smallskip 
\item [\textnormal {(i)}] We have 
  $\pi^{-1}(0)=\{1\dot{2},2\dot{1}, 3\dot{1}\}$.

\smallskip 
\item [\textnormal {(ii)}] \  Let $v,w\in W$ with $v\ne w$. Then   
 $\pi(v)=\pi(w)$ if and only if  there exists a finite word $u\in W_{\ast}$ such that $v,w\in 
 \{u1\dot{2},u2\dot{1},u3\dot{1}\}$. In this case,
  $\pi(v)=\pi(w)=f_u(0)$. 
 \end{itemize}
\end{lemma}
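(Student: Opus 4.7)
My plan is to prove (i) first and then deduce (ii) from it via the self-similarity relation $\pi(uv)=f_u(\pi(v))$ established in \eqref{pifrel}. The geometric heart of (i) is the uniqueness claim that $-1\in \Te_1\ba (\Te_2\cup\Te_3)$ and $1\in \Te_2\ba (\Te_1\cup\Te_3)$. Since $\Te_k\sub H_k$ by \eqref{eq:baseincl}, it suffices to verify these for the convex hulls. For $z\in H$ one has $\operatorname{Re}(f_1(z))=\tfrac12\operatorname{Re}(z)-\tfrac12\le 0$ and $\operatorname{Re}(f_2(z))=\tfrac12\operatorname{Re}(z)+\tfrac12\ge 0$, so $1\notin H_1$ and $-1\notin H_2$. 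For $H_3$, writing $z=x+iy$ one computes $f_3(z)=\tfrac{y}{2}+i\tfrac{x+1}{2}$, and the bounds $|x|,|y|\le 1$ valid on $H$ confine $H_3$ to the rectangle $\{u+iv: |u|\le \tfrac12,\, 0\le v\le 1\}$, which excludes both $\pm 1$.

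With this in hand, I would first prove $\pi^{-1}(-1)=\{\dot 1\}$ by the following bootstrap: if $\pi(w)=-1$, then $\pi(w)\in \Te_{w_1}$ forces $w_1=1$, and \eqref{pifrel} yields $\pi(w_2w_3\dots)=f_1^{-1}(-1)=-1$ since $-1$ is the fixed point of $f_1$. Iterating, every letter of $w$ is $1$, so $w=\dot 1$. The symmetric argument using $1\in \Te_2$ only and the fixed point of $f_2$ gives $\pi^{-1}(1)=\{\dot 2\}$. Now suppose $\pi(w)=0$. Applying \eqref{pifrel} once gives $\pi(w_2w_3\dots)=f_{w_1}^{-1}(0)$, which equals $1$ if $w_1=1$ and $-1$ if $w_1\in\{2,3\}$. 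The uniqueness of preimages of $\pm 1$ just established forces the tail to be $\dot 2$, $\dot 1$, or $\dot 1$, respectively. Conversely, a direct computation using $\pi(\dot 1)=-1$, $\pi(\dot 2)=1$, and \eqref{pifrel} shows $\pi(1\dot 2)=f_1(1)=0$, and likewise $\pi(2\dot 1)=\pi(3\dot 1)=0$, completing (i).

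For (ii), the ``if'' direction is immediate from \eqref{pifrel}: for any $u\in W_*$ and any $v\in\{1\dot 2,2\dot 1,3\dot 1\}$, one has $\pi(uv)=f_u(\pi(v))=f_u(0)$. For the ``only if'' direction, given distinct $v,w\in W$ with $\pi(v)=\pi(w)$, let $u\in W_*$ be their longest common initial segment (possibly empty), and write $v=uav''$, $w=ubw''$ with $a,b\in\{1,2,3\}$, $a\ne b$. Cancelling the bijection $f_u$ from the equation $f_u(\pi(av''))=f_u(\pi(bw''))$ given by \eqref{pifrel} yields $\pi(av'')=\pi(bw'')$, a common value lying in $\Te_a\cap \Te_b=\{0\}$ by \eqref{e:TjTk0}. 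Applying (i) to each tail forces $av'', bw''\in\{1\dot 2,2\dot 1,3\dot 1\}$, so $v,w\in \{u1\dot 2,u2\dot 1,u3\dot 1\}$ with common value $\pi(v)=\pi(w)=f_u(0)$. The main obstacle is the geometric half-plane/rectangle computation excluding $\pm 1$ from the wrong tiles; once this is secured, the rest is a routine unwinding of the self-similar relation \eqref{pifrel} combined with the disjointness \eqref{e:TjTk0} of the three first-level pieces outside of $0$.
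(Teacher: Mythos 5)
Your proof is correct and follows essentially the same route as the paper: in (i), a bootstrap exploiting that $1$ (resp.\ $-1$) lies in $\Te_2$ (resp.\ $\Te_1$) and no other first-level piece, combined with the fixed-point relations $f_2(1)=1$, $f_1(-1)=-1$; and in (ii), peeling off the longest common initial segment and invoking $\Te_a\cap\Te_b=\{0\}$ from \eqref{e:TjTk0}. The only differences are organizational — you isolate $\pi^{-1}(\pm 1)=\{\dot 1\},\{\dot 2\}$ as separate sub-claims and supply the explicit half-plane/rectangle computation showing $1\notin H_1\cup H_3$ and $-1\notin H_2\cup H_3$, whereas the paper leaves this geometric exclusion implicit (citing \eqref{eq:baseincl} and Figure~\ref{f:Hsets}).
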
 
Note that if $v\in W$ and $v\in \{u1\dot{2},u2\dot{1},u3\dot{1}\}$ for some $u\in W_{\ast}$, then $u$ is uniquely determined. This and the lemma imply that each point in $\Te=\pi(W)$ has at most three peimages under the map $\pi$.

  \begin{proof} (i) Note that $1\dot{2}\in \pi^{-1}(0)$ as follows from 
  $$  f_2(1)=1 \text{ and } f_1(1)=0.$$
  Similarly, $2\dot{1}, 3\dot{1} \in \pi^{-1}(0),$ because   
  $$  f_1(-1)=-1, f_2(-1)=0, \text{ and }  f_1(-1)=-1,\, f_3(-1)=0. $$ 
  Hence $\{1\dot{2}, 2\dot{1}, 3\dot{1}\}\sub \pi^{-1}(0)$. 
  
  To prove the reverse inclusion, suppose that $\pi(w)=0$ for some $w = w_1w_2\ldots \in W$. We first consider the case $w_1=1$. 
   Then $0=f_1(a)$, where   $a\coloneqq \pi(w_2w_3\ldots)$, and so $a=1$. Since $1\in \mb{T}_2\backslash (\mb{T}_1\cup \mb{T}_3)$ as follows from \eqref{eq:baseincl}, we must have $w_2=2$. Then 
   $1=f_2(b)$, where $b\coloneqq w_3w_4\ldots$, and so $b=1 \in \mb{T}_2\backslash (\mb{T}_1\cup \mb{T}_3) $.  This  implies   $w_3=2$. Repeating the argument, we see that $2=w_2=w_3=\ldots$, and so $w=1\dot{2}$.
  
 A  very similar argument shows that if $w_1=2$, then $w=2\dot{1}$, and  if $w_1=3$, then $w=3\dot{1}$.
 
 \smallskip
(ii) Suppose that $\pi(v)=\pi(w)$ for some $u,v\in W$, $u\ne v$. Let $u\in W_{\ast}$ be the longest initial word that $v$ and $w$ have in common. So $v=uv_{n+1}v_{n+2}\dots$ and  
$w=uw_{n+1}w_{n+2}\dots$, where $n\in \N_0$ and $v_{n+1}\neq w_{n+1}$. Since $f_u$ is bijective and
 $$\pi(v)= f_u(\pi(v_{n+1}v_{n+2}\dots))=\pi(w)= f_u(\pi(w_{n+1}w_{n+2}\dots))\, , $$ we have 
$$\pi(v_{n+1}v_{n+2}\dots) = \pi(w_{n+1}w_{n+2}\dots).$$
Note that
$\pi(v_{n+1}v_{n+2}\dots)  \in \mb{T}_{v_{n+1}}$ and 
$\pi(w_{n+1}w_{n+2}\dots)  \in \mb{T}_{w_{n+1}}.$ Since 
$v_{n+1}\ne W_{n+1}$, 
by  (\ref{e:TjTk0}) this is only possible if  $\pi(v_{n+1}v_{n+2}\dots) = \pi(w_{n+1}w_{n+2}\dots)=0$. Hence 
$$v_{n+1}v_{n+2}\dots, w_{n+1}w_{n+2}\ldots\in \{1\dot{2},2\dot{1},3\dot{1}\}$$ by (i).  The  ``only if" implication  follows. Our considerations also show that  $\pi(v)=\pi(w)=f_u(0)$.
The reverse implication follows from (i). 
 \qed
\end{proof}

Our next goal is to show that $\Te$ is indeed a tree. This requires some preparation. 

 \begin{lemma}\label{lem:arc1} 
 
  \begin{itemize} 
  \smallskip 
  \item [\textnormal {(i)}]   For each  $p\in \Te$ there exists a 
  (possibly degenerate)  arc $\alpha$ in $\Te$
 with endpoints $-1$ and $p$. 

  \smallskip 
  \item [\textnormal {(ii)}] \  The sets $\Te$, $\Te\ba\{1\}$, and $\Te\ba \{-1\}$
   are arc-connected. 
      \end{itemize} 
\end{lemma}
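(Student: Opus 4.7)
The plan is to establish (i) by an explicit construction of a path from $-1$ to any $p\in \Te$ via the symbolic coding $\pi$, and then to derive (ii) as corollaries. For (ii), arc-connectedness of $\Te$ will be immediate, $\Te\ba\{1\}$ will follow by checking that the arcs built in (i) never meet $1$ internally, while $\Te\ba\{-1\}$ requires a separate recursive argument---this last piece is the main obstacle, since the arcs of (i) emanate from $-1$ itself.

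For (i), I write $p=\pi(w)$ with $w=w_1w_2\ldots\in W$ and set $p_n=f_{w_1\ldots w_n}(-1)$ for $n\ge 0$, with $p_0=-1$. Direct computation gives $f_1(-1)=-1$ and $f_2(-1)=f_3(-1)=0$, so for each $n\ge 1$ either $p_n=p_{n-1}$ (when $w_n=1$), or else $p_{n-1}$ and $p_n$ are the two endpoints of the Euclidean segment $\sigma_n\coloneqq f_u([-1,0])$, where $u=w_1\ldots w_{n-1}$. Each $\sigma_n$ sits inside $f_u(I)\sub J_{n-1}\sub\Te$ by Proposition~\ref{prop:CSSTgeo} and has length $2^{1-n}$. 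Concatenating the $\sigma_n$'s gives a path $\gamma\:[0,1]\to\Te$ of total length at most $\sum_{n\ge 1}2^{1-n}=2$, with $\gamma(0)=-1$ and $\gamma(1)=p$. Continuity at $1$ follows because the tail $\bigcup_{k>n}\sigma_k$ is contained in $\Te_{w_1\ldots w_n}$, whose diameter $2^{1-n}$ tends to $0$ and which contains $p$. Since the image of a path between two points contains an arc joining them, (i) follows (the arc being degenerate when $p=-1$).

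For (ii), arc-connectedness of $\Te$ follows at once from (i) by concatenating arcs through the common point $-1$. For $\Te\ba\{1\}$, I check that each arc from (i) touches $1$ only if $p=1$. Indeed, $1\in\sigma_n=f_u([-1,0])$ would force $f_u^{-1}(1)\in[-1,0]$; a short induction on $|u|$ using $f_1^{-1}(1)=3$, $f_2^{-1}(1)=1$, $f_3^{-1}(1)=2i-1$ (only the value $1$ lies in $H$) shows that $1\in f_u(I)$ forces $u=2^k$, in which case $f_u^{-1}(1)=1\notin[-1,0]$. So for $p\ne 1$ the entire arc remains in $\Te\ba\{1\}$, and concatenation through $-1\in\Te\ba\{1\}$ gives the desired arc-connectedness.

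For $\Te\ba\{-1\}$, I switch basepoint to $1$ and argue in two layers. Since $\Te_2$ and $\Te_3$ are homeomorphic to $\Te$ via $f_2$ and $f_3$ (hence arc-connected by the first part of (ii)) and share the point $0$, and since neither contains $-1$, the set $\Te_2\cup\Te_3$ is arc-connected inside $\Te\ba\{-1\}$ and contains $1=f_2(1)$. So every point in $\Te_2\cup\Te_3$ can be joined to $1$ in $\Te\ba\{-1\}$. For an arbitrary $p=\pi(w)\in\Te\ba\{-1\}$ with $w\ne\dot 1$, let $n_0=\min\{n\:w_n\ne 1\}$; then $w_{n_0}\in\{2,3\}$ and $p\in f_{1^{n_0-1}}(\Te_{w_{n_0}})$. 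The preimage $p'\coloneqq f_{1^{n_0-1}}^{-1}(p)\in\Te_{w_{n_0}}\sub\Te_2\cup\Te_3$ is distinct from $-1$, so by the preceding layer there is a path in $\Te\ba\{-1\}$ from $p'$ to $1$. Applying the homeomorphism $f_{1^{n_0-1}}$, which fixes $-1$, yields a path from $p$ to $f_{1^{n_0-1}}(1)=2^{2-n_0}-1$ inside $\Te_{1^{n_0-1}}\ba\{-1\}\sub\Te\ba\{-1\}$. I close up with the segment $[2^{2-n_0}-1,1]\sub I\sub\Te$, which avoids $-1$ since its left endpoint is strictly greater than $-1$. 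Concatenation gives the required path in $\Te\ba\{-1\}$, from which an arc is extracted.
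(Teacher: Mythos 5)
Your proof is correct, and for part (i) and the first two claims of (ii) it is essentially the paper's argument (minor variation: you build a path from closed segments $f_u([-1,0])$ and extract an arc via the standard Peano-space fact, whereas the paper works directly with half-open segments $[a_n,a_{n+1})$ and verifies pairwise disjointness to write down the arc explicitly; also, your check that $1\notin\sigma_n$ proceeds geometrically via $f_k^{-1}(1)\in H$ forcing $k=2$, while the paper uses the symbolic Lemma~\ref{l:0_triple}~(ii) --- both are fine). The genuinely different piece is your argument for the arc-connectedness of $\Te\ba\{-1\}$, and it is a nice alternative. The paper's route is to take the arcs $\alpha,\beta$ from $p,q$ to $-1$ built in (i), show by carefully comparing the symbolic codings of $p$ and $q$ that $\alpha$ and $\beta$ necessarily share a common point $\ne -1$, and then travel along $\alpha$ until first hitting $\beta$. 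Your route instead exploits the IFS structure directly: observe that $\Te_2\cup\Te_3$ is an arc-connected subset of $\Te\ba\{-1\}$ containing $1$, that every $p\ne -1$ lies in some $f_{1^{k}}(\Te_2\cup\Te_3)$ with $f_{1^{k}}$ fixing $-1$, and that the exit point $f_{1^{k}}(1)=2^{1-k}-1$ can be joined to $1$ along a subsegment of $I$ that misses $-1$. This trades the paper's bare-hands intersection analysis for a cleaner inductive use of self-similarity, and arguably makes the mechanism of why $-1$ can be avoided more transparent. One small point of hygiene: for the step ``$w\ne\dot 1$ when $p\ne -1$'' you are implicitly using that $\dot 1$ is the \emph{only} coding of $-1$; this follows from Lemma~\ref{l:0_triple}~(ii) since $\dot 1\notin\{u1\dot 2,u2\dot 1,u3\dot 1\}$ for any $u$, and is worth recording.
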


\begin{proof} (i) Let  $p\in \Te$. Then  $p=\pi(w)$ for some $w=w_1w_2\ldots \in W$. 

 Let $v_n=w_1\dots  w_n$ and define $a_n=f_{v_n}(-1)\in \Te$ for $n\in \N_0$. Then 
$a_0=f_{\emptyset}(-1)=-1$.  
   For each $n\in \N_0$ we have 
$$ [a_n, a_{n+1})=
[f_{v_n}(-1), f_{v_n {w_{n+1}}}(-1))=
f_{v_n}\big (   [-1, f_{w_{n+1}}(-1)   )  \big ). $$
If $w_{n+1}=1$, then $f_{w_{n+1}}(-1)=f_1(-1)=-1$; so $a_n=a_{n+1}$ 
and 
$$ [a_n, a_{n+1})=\emptyset.
$$  If $w_{n+1}\in \{2,3\}$, then 
$f_{w_{n+1}}(-1)=0$; so   
$$[-1, f_{w_{n+1}}(-1))=[-1,0)\sub \Te_1\ba \{0\}, $$ and
$$ [a_n, a_{n+1}) = f_{v_n}([-1, 0))\sub f_{v_n}(\Te_{1}\ba \{0\})\sub \Te_{v_n}\sub \Te. $$ 
Moreover,
\begin{equation}\label{eq:len1} 
 \length ([a_n, a_{n+1}))=\frac{1}{2^n} \length \big([-1, f_{w_{n+1}}(-1))\big)= \begin{cases}
  2^{-n} &\text{ if } w_{n+1} = 2,3,\\
  0 &\text{ if } w_{n+1} =1. 
\end{cases}
\end{equation}
Let $$ A_n\coloneqq \{p\}\cup\bigcup_{k\ge n+1} [a_k, a_{k+1})$$
for $n\in  \N_0$. 
By what we have seen above, 
$$ [a_k, a_{k+1})\sub \Te_{v_k}\sub \Te_{v_{n+1}}$$ 
for $k\ge n+1$. Since $p=\lim_{k\to \infty}a_k$ and $\Te_{v_{n+1}}$ is closed, we also have $p\in \Te_{v_{n+1}}$, and so 
$$ A_n \sub \Te_{v_{n+1}}. $$ 
This implies that 
$$[a_n, a_{n+1})\cap A_n=\emptyset $$
for each  $n\in \N_0$. 
Indeed, if $w_{n+1}=1$  this is clear, because then $[a_n, a_{n+1})=\emptyset$.

If $w_{n+1}=2$, then
$$ A_n\sub  \Te_{v_{n+1}}=f_{v_n}(f_2(\Te))=f_{v_n}(\Te_2),$$
which  implies that 
$$ [a_n, a_{n+1})\cap A_n \sub f_{v_n}(\Te_1\ba\{0\})\cap f_{v_n}(\Te_2)= f_{v_n}( (\Te_1\ba\{0\})\cap \Te_2)=\emptyset. $$ 
If $w_{n+1}=3$, then $[a_n, a_{n+1})\cap A_n=\emptyset$  by the same reasoning.
This shows that the sets 
$$ [a_0,a_1), [a_1, a_2), [a_2, a_3),\dots, \{p\}$$ are 
pairwise disjoint. As $n\to \infty$,  we have   $a_n\to p$ and also $\diam(A_n)\to 0$ by \eqref{eq:len1}. 
 Therefore, the union 
  \begin{equation}\label{eq:alp}
   \alpha =   [a_0,a_1)\cup  [a_1, a_2)\cup [a_2, a_3)\cup \dots \cup  \{p\}
   \end{equation} 
  is an arc in $\Te$ joining $a_0=-1$ and $p$ (if $p=-1$, this arc is degenerate). We have proved (i).

 To prepare the proof of (ii),  we claim that if $p\ne 1$, then this arc $\alpha$ does not contain
  $1$.  
Otherwise, we must have $1\in [a_n, a_{n+1})\sub \Te_{v_n} $ for some $n\in \N_0$.
This shows that $1$ can be written in the form $1=\pi(u),$
where $u\in W $ is an infinite word starting with the finite word $v\coloneqq v_n$ (note that this  and the statements below are trivially true for $n=0$).
On the other hand, we have $f_2(1)=1$ which implies that 
$1=\pi(\dot 2)$. 
By Lemma~\ref{l:0_triple}~(ii) this is only possible if  all the letters in $v$ are 
$2$'s. Then $f_{v}(1)=1$ and it follows that 
$$ 1=f_v(1)\in [a_n,a_{n+1})= f_{v}
\big([-1, f_{w_{n+1}}(-1))\big).$$ Since $f_v$ is a bijection, this implies  that $1\in [-1, f_{w_{n+1}}(-1))$. Now 
$f_{w_{n+1}}(-1)\in \{-1, 0\}$, and we obtain a contradiction. So indeed, $1\not \in \alpha$.

\smallskip 
(ii) Let $p,q\in \Te$ with $p\ne q$ be arbitrary. In order to show 
that $\Te$ is arc-connected, we have to find   an arc  $\ga$ in
$\Te$ joining $p$ and $q$. Now by the construction in (i) we can find arcs $\alpha$ and $\beta$ in $\Te$ joining $p$ and $q$ to $-1$, respectively. 
Then the  desired arc $\ga$ can be found in the union $\alpha \cup \beta$  as follows. Starting from $p$, we travel long $\alpha $ until we first hit  $\beta$, say in a point $x$. Such a  point $x$ exists, because $-1\in  \alpha\cap \beta\ne \emptyset$. Let $\alpha'$ be the (possibly degenerate) subarc of $\alpha $ with endpoints $p$ and $x$, and $\beta'$ be the subarc of $\beta$ with endpoints $x$ and $q$. Then $\ga =\alpha'\cup \beta'$ is an arc   in $\Te$ joining $p$ and $q$. 

The arc-connectedness of $\Te\ba \{1\}$ is proved by the same argument. Indeed, if $p,q\in \Te\ba \{1\}$, then  by the remark in the last part of the proof of (i), the arcs  $\alpha$ and $\beta$ constructed as in (i)  do not contain $1$. Then the arc $\ga\sub \alpha \cup \beta$ does not contain $1$ either. 

Finally, to show that  $\Te\ba \{-1\}$ is arc-connected, we assume that  $p,q\in \Te\ba \{-1\}$.
 If $x$ is, as  above, the first point on $\beta$ as we travel  along $\alpha $ starting from $p$, then it suffices to show that $x\ne -1$, because then $-1\not \in \ga$. This in turn will follow if we can show that $\alpha$ and $\beta$ have another point in common besides
  $-1$.  

To find such a point, we revisit the above construction. 
Pick  $w=w_1w_2\ldots\in W$ and  $u=u_1u_2\ldots \in W$ such that $p=\pi(w)$ and $q=\pi(u)$. Let $\alpha $ and $\beta$ be the arcs for $p$ and $q$, respectively, as constructed in (i). Then  $\alpha$ is as in \eqref{eq:alp} and we can write the other arc $\beta$ 
 as 
$$  \beta =   [b_0,b_1)\cup  [b_1, b_2)\cup [b_2, b_3)\cup \dots \cup  \{q\}, $$
where $b_n=f_{u_1\dots u_n}(-1)$ for $n\in \N_0$. Since $p\ne q$, we have $w\ne u$, and so there exists a largest  $n\in \N_0$ such that 
$v\coloneqq w_1\dots w_n=u_1\dots u_n$ and $w_{n+1}\ne u_{n+1}$. Then $a_n=b_n=f_v(-1)\in \alpha\cap \beta$. If $a_n= b_n\ne -1$, we are done. So we may assume that 
$a_n=b_n=f_v(-1)=-1$. Then 
$a_0=\dots =a_n=-1$, and so $w_k=u_k=1$ for $k=1, \dots, n$.  This shows that  all letters in  $v$ are equal to $1$.

Since the letters $w_{n+1}$ and $u_{n+1}$ are distinct,
one of them is different from $1$. We may assume $u_{n+1}\ne 1$.
Then $f_{u_{n+1}}(-1)=0$,
and so 
$(b_n, b_{n+1}]=f_v((-1, 0]) \sub \beta\setminus \{-1\}$. 
Here we used that $f_v$ is a homeomorphism with $f_v(-1)=-1$. 

 Since $p=\pi(w) \ne -1=\pi(\dot 1)$, we have  $w\ne \dot 1$ and so there exists a smallest $\ell\in \N$ such that $w_{n+\ell}\ne 1$.
Then  $f_{w_{n+\ell}}(-1)=0$ and so a simple computation using 
$w_{n+1}=\dots= w_{n+\ell-1}=1$ shows  that
$$ c\coloneqq f_{w_{n+1}\dots w_{n+\ell}}(-1)= 
f_{w_{n+1}\dots w_{n+\ell-1}}(0)= 2^{1-\ell}-1\in (-1, 0].$$ 
Hence 
$$a_{n+\ell}=f_v(c)\in f_v((-1, 0])\sub \beta\setminus \{-1\}.$$ 
It follows that $a_{n+\ell}\in \alpha\cap  \beta$ and $a_{n+\ell}\ne -1$ as desired.  \qed
\end{proof}

The next lemma will help us to identify the branch points of $\Te$ once we know that $\Te$ is a tree.  

  \begin{lemma}\label{lem:trip}
  \begin{itemize} 
  \smallskip 
  \item [\textnormal {(i)}] 
The  components of $\Te\ba\{0\}$ are given by 
the non-empty sets $\Te_1\ba\{0\}$, $\Te_2\ba\{0\}$, $\Te_3\ba\{0\}$. 
 \smallskip 
  \item [\textnormal {(ii)}] \ If $u\in W_*$, then $\Te\ba\{f_u(0)\}$ has exactly three  components. The 
 sets $\mb{T}_{u1}\ba\{f_u(0)\}$,
 $\mb{T}_{u2}\ba\{f_u(0)\}$,
 $\mb{T}_{u3}\ba\{f_u(0)\}$ are each contained  in a different component of $\mb{T}\ba\{f_u(0)\}$. 
\end{itemize} 
  \end{lemma}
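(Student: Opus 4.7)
I would identify the three sets $\Te_1\ba\{0\}$, $\Te_2\ba\{0\}$, $\Te_3\ba\{0\}$ as the components of $\Te\ba\{0\}$. They partition $\Te\ba\{0\}$ by \eqref{Tinv} and \eqref{e:TjTk0}, are non-empty, and each is closed in $\Te\ba\{0\}$ (since $\Te_k$ is compact); as the complement of the union of the other two closed sets, each is also open. For connectedness, I would use the identities $f_1(1)=0$ and $f_2(-1)=f_3(-1)=0$ to write $\Te_1\ba\{0\}=f_1(\Te\ba\{1\})$ and $\Te_k\ba\{0\}=f_k(\Te\ba\{-1\})$ for $k\in\{2,3\}$, and invoke Lemma~\ref{lem:arc1}~(ii) together with the fact that each $f_k$ is a homeomorphism of $\C$.

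\textbf{Part (ii).} Let $p=f_u(0)$. Applying the homeomorphism $f_u$ to part (i) shows that $\Te_u\ba\{p\}$ has exactly three components, namely $\Te_{uk}\ba\{p\}$ for $k=1,2,3$. To promote this from $\Te_u$ to the whole of $\Te\ba\{p\}$, my plan hinges on the structural claim
\[
(\star)\qquad \Te_u\cap\overline{\Te\ba\Te_u}\subseteq\{f_u(-1),f_u(1),f_u(i)\},
\]
which in particular excludes $p$. I would deduce $(\star)$ from Lemma~\ref{l:0_triple}~(ii): a point $q$ on the left admits two $\pi$-representations, one starting with $u$ and one not, and Lemma~\ref{l:0_triple}~(ii) then forces $q=f_u(\pi(\dot{b}))$ for some $b\in\{1,2,3\}$, with $\pi(\dot{b})\in\{-1,1,i\}$ a leaf of $\Te$. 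Each leaf lies in a uniquely determined sub-tile ($-1\in\Te_1$, $1\in\Te_2$, $i\in\Te_3$), so each attachment point lies in a uniquely determined $\Te_{uk}$. For each $k\in\{1,2,3\}$ I would define $C_k$ to be the union of $\Te_{uk}\ba\{p\}$ together with every connected component of $\Te\ba\Te_u$ whose attachment points to $\Te_u$ all lie in $\Te_{uk}$. Since $\Te$ is connected by Lemma~\ref{lem:arc1}~(ii), every component of $\Te\ba\Te_u$ has at least one attachment point to $\Te_u$, which by $(\star)$ lies in some $\Te_{uk}$, so the three $C_k$'s cover $\Te\ba\{p\}$. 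Each $C_k$ is connected (the attached pieces share their attachment point with $\Te_{uk}\ba\{p\}$), closed in $\Te\ba\{p\}$, and---as the complement of the union of the other two closed sets---also open; thus they are the three components.

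\textbf{Main obstacle.} The delicate step is verifying that the $C_k$'s are pairwise disjoint---equivalently, that no connected component of $\Te\ba\Te_u$ has attachment points in two distinct sub-tiles $\Te_{uk}$. I would tackle this via the tile decomposition at level $|u|$: by \eqref{e:TjTk0} applied along common prefixes, each tile $\Te_v$ with $v\ne u$ meets $\Te_u$ in at most one point, and a careful adjacency analysis of these tiles ensures that the ``branch'' of $\Te\ba\Te_u$ rooted at each attachment point is confined to tiles meeting $\Te_u$ only at that same point.
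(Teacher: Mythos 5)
Your Part (i) is essentially the paper's argument: write each $\Te_k\ba\{0\}$ as a homeomorphic image of $\Te\ba\{\pm 1\}$, invoke Lemma~\ref{lem:arc1}~(ii) for connectedness, and note that these sets are pairwise disjoint, non-empty, and relatively closed in (hence also relatively open in) $\Te\ba\{0\}$.

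Your Part (ii), however, departs from the paper and has a genuine gap exactly where you flag it. The paper argues by induction on $|u|$: writing $u=\ell u'$ with $\ell=u_1$, it applies the inductive hypothesis inside the single tile $\Te_\ell=f_\ell(\Te)$ to get three components $U_1,U_2,U_3$ of $\Te_\ell\ba\{f_u(0)\}$, identifies the one (say $U_1$) that contains $0$, and then observes that $U_1\cup\Te_j\cup\Te_k$ (the two other level-one tiles) is connected and relatively closed, so that $U_1\cup\Te_j\cup\Te_k,\ U_2,\ U_3$ are exactly the three components. The whole point is that one never has to understand the global adjacency of level-$n$ tiles: the level-one decomposition $\Te=\Te_1\cup\Te_2\cup\Te_3$ with $\Te_i\cap\Te_j=\{0\}$ does the bookkeeping, and the induction hides everything else.

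By contrast, your plan needs the claim that no component of $\Te\ba\Te_u$ has closure containing both $f_u(-1)$ and $f_u(1)$ (the only possible attachment points --- incidentally, your $(\star)$ is correct but not sharp: the tails produced by Lemma~\ref{l:0_triple}~(ii) are only $\dot 1$ and $\dot 2$, so the attachment set is $\subseteq\{f_u(-1),f_u(1)\}$; also $\pi(\dot 3)=\tfrac13+\tfrac23 i$, not $i$, and $i=\pi(3\dot 2)$). This is precisely the statement that $\Te\ba\Te_u$ does not ``route around'' $\Te_u$, and it is not a local fact about pairwise tile intersections. Knowing that each $\Te_v$, $v\ne u$, meets $\Te_u$ in at most one point does not preclude a chain of tiles $\Te_{v_1},\dots,\Te_{v_r}$ avoiding $\Te_u$ whose union joins $f_u(-1)$ to $f_u(1)$; ruling that out needs a global argument. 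At this point in the paper $\Te$ has not yet been shown to be a tree or even locally connected (Proposition~\ref{prop:CSST} comes \emph{after} this lemma and uses it), so you cannot appeal to arc-uniqueness or acyclicity. The only tools available are Part~(i) and the already-proved cases of Part~(ii) for shorter words --- which is to say, the induction the paper runs. Your ``careful adjacency analysis'' is, on inspection, a placeholder for that induction rather than an alternative to it; without it the proof of Part~(ii) is incomplete. (There is also a secondary issue you do not flag: to conclude each $C_k$ is relatively closed you would need control over limit points of the union of infinitely many components $V$, which again is easiest to get from the inductive structure.)
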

In the proof we will use the following general facts about 
 components of a subset $M$ of   a metric space $X$. 
Recall that a set $A\sub M$ is relatively closed in $M$ if $A=\overline A\cap M$, or equivalently, if each limit point of $A$ that belongs to $M$ also belongs to $A$. Each component $A$ of $M$ is relatively closed in $M$, because its relative closure $\overline A \cap M$ is a connected subset of $M$ with $A\sub\overline A \cap M$. Hence $A=\overline A \cap M$, because $A$ is a  component of $M$ and hence a maximal connected subset of $M$. 
 
  If $A_1, \dots, A_n\sub M$  for some $n\in \N$ are non-empty, pairwise disjoint, relatively closed, and connected sets with $M=A_1\cup \dots \cup A_n$, then these sets are the components of $M$.

  \begin{proof}
(i) Each of the sets $\mb{T}\backslash\{1\}$ and 
 $\mb{T}\backslash\{-1\}$ is  non-empty,  and connected by 
Lemma~\ref{lem:arc1}~(ii).   Therefore, the sets 
$$\mb{T}_1\backslash\{0\}=f_1(\mb{T}\backslash\{1\}),\  \mb{T}_2\backslash\{0\}=f_2(\mb{T}\backslash\{-1\}),\  \mb{T}_3\backslash\{0\}=f_3(\mb{T}\backslash\{-1\})$$ are non-empty and   connected.
They are also relatively closed in $\mb{T}\backslash\{0\}$ and  pairwise disjoint by \eqref{e:TjTk0}. Since $\Te=\Te_1\cup\Te_2\cup \Te_3$ we have 
$$ \Te\ba\{0\} = (\Te_1\ba\{0\})\cup (\Te_2\ba\{0\}) \cup (\Te_3\ba\{0\}).$$ This implies that the  sets $\Te_k\ba\{0\}$, $k=1,2,3$,  are the  components of 
$\mb{T}\backslash\{0\}$.  The statement follows. 

\smallskip 
(ii) We prove this by induction on the length $n\in \N_0$ of the word 
$u\in W_*$. If $n=0$ and so $u=\emptyset$,  this follows from  statement (i).

Suppose the statement is true for all words of length $n-1$, where $n\in \N$. Let $u=u_1\dots u_n\in W_n$ be an arbitrary word of length $n$. We set $\ell\coloneqq u_1$ and $u'\coloneqq u_2\dots u_{n}$. Then $u =\ell u'$. To be specific and ease notation, we will assume that $\ell=1$. The other cases $\ell=2$ or $\ell=3$ are completely analogous and we will skip the details.

Note that $f_u(0)\ne 0$. Indeed, if 
$$0=f_u(0)=f_u(\pi(1\dot2))=
\pi(u1\dot2), $$ then $u1\dot2\in \{1\dot2, 2\dot 1, 3\dot1\}$ by Lemma~\ref{l:0_triple}~(i). This is only possible $u1=1$. This is a contradiction, because $u$ has length $n\ge 1$. Hence $f_u(0)\ne 0$. Since $u_1=\ell=1$, we  have $f_u(0)\in \Te_1\ba\{0\}$. 

By induction hypothesis, $\Te\ba \{f_{u'}(0)\}$ has exactly three connected components $V_1$, $V_2$, $V_3$, and
we may assume that 
$ \Te_{u'k}\ba  \{f_{u'}(0)\} \sub V_k$ 
for $k=1,2,3$. 
It follows that 
$$f_\ell(\Te\ba \{f_{u'}(0)\})=f_1(\Te\ba \{f_{u'}(0)\})= \Te_1\ba \{f_{u}(0)\}$$ 
has exactly three connected components $U_k=f_1(V_k)\sub \Te_1$ with 
$$\Te_{uk}\ba\{f_{u}(0)\} =\Te_{1u'k}\ba  \{f_{1u'}(0)\} =
 f_1(\Te_{u'k}\ba  \{f_{u'}(0)\}) \sub 
f_1( V_k)=U_k$$
for $k=1,2,3$.

Let  $k\in \{1,2,3\}$. Then we have  $V_k=\overline {V_k}\cap \Te\ba \{f_{u'}(0)\}$, because  $V_k$ is a component of  
$\Te\ba \{f_{u'}(0)\}$ and hence relatively closed in 
$\Te\ba \{f_{u'}(0)\}$.   This implies that 
\begin{align*}
U_k&=f_1(V_k)=f_1( \overline {V_k}\cap \Te\ba \{f_{u'}(0)\})=
f_1( \overline {V_k})\cap \Te_1\ba \{f_{u}(0)\}\\
&=\overline {f_1(V_k)}\cap \Te_1\ba \{f_{u}(0)\} = \overline {U_k} 
\cap  \Te_1\ba \{f_{u}(0)\}.
\end{align*}   
Since $\Te_1\sub \Te$ is compact, $U_k\sub \Te_1$, and so $\overline {U_k} \sub \Te_1$,  this shows that every limit point of $U_k$ distinct from $f_u(0)$ belongs to $U_k$. Hence $U_k$ is relatively closed in $\Te\ba  \{f_{u}(0)\}$. 

Exactly one of the components of $\Te_1\ba \{f_{u}(0)\}$, say $U_1$, contains the point 
$0\in \Te_1\ba \{f_{u}(0)\}$. Then $U'_1\coloneqq U_1\cup \Te_2\cup \Te_3$ is a relatively closed subset of $\Te \ba\{f_u(0)\}$. This set is also connected, because the sets $U_1$, $\Te_2=f_2(\Te)$, $\Te_3=f_3(\Te)$  are connected and have the point $0$ in common.  Hence  the connected sets $U'_1$, $U_2$, $U_3$  are pairwise disjoint, relatively closed in 
$ \Te \ba  \{f_{u}(0)\}$, and 
$$ \Te \ba  \{f_{u}(0)\}=  (\Te_1 \ba \ \{f_{u}(0)\})\cup \Te_2\cup \Te_3=U'_1\cup U_2\cup U_3. $$ This implies that 
 $\Te \ba  \{f_{u}(0)\}$ has exactly the three connected 
 components $U'_1$, $U_2$, $U_3$. 
Moreover,   $\Te_{u1}\ba\{f_{u}(0)\}$, $\Te_{u2}\ba\{f_{u}(0)\}$, $\Te_{u3}\ba\{f_{u}(0)\}$ lie in the different components 
$U_1'$, $U_2$, $U_3$ of $\Te \ba  \{f_{u}(0)\}$, respectively. 
This provides the inductive step, and the statement follows.  \qed
 \end{proof}
 
 We can now show that $\Te$ is a metric tree. 
 
%
%
  
  
  \begin{proofof} {\em Proof of Proposition~\ref{prop:CSST}.} We know that $\Te$ is  compact, contains at least two points, and is 
 arc-connected  by Lemma~\ref{lem:arc1}.   
 
 Let $p\in \Te$ and $n\in \N$ be arbitrary, and define 
 $$ N=\bigcup \{ \Te_u: u\in W_n \text{ and } p\in \Te_u\}. $$ 
Since each of the sets 
   $\Te_u=f_u(\Te)$, $u\in W_*$, is a compact and connected
   subset of $\Te$,  
 the set  $N$ is  connected. Moreover, since each of the finitely many sets  $\Te_u$, $u\in W_n$, is closed, we can find 
 $\delta>0$ such that 
 $$ \dist(p, \Te_u)\ge \delta$$ 
 whenever $u\in W_n$ and  $p\not\in \Te_u$. Then we have $B(p, \delta)\cap \Te \sub N$ by \eqref{eq:union1}, and so $N$ is a connected relative neighborhood of $p$ in $\Te$. It follows from  \eqref{eq:diam} 
 that  $\diam(N)\le 2^{2-n} $. This shows that each point in $\Te$ has \arbly\ small connected  neighborhoods in $\Te$. Hence $\Te$ is locally connected.

To complete the proof, it remains to show that the arc joining two given distinct points in $\Te$  is unique. 
For this we argue by contradiction and assume that there are two distinct arcs in $\mb{T}$ with the same  endpoints. By considering suitable subarcs of these arcs, we can reduce
 to the following situation: there are arcs $\alpha,\beta\sub \mb{T}$ that have the distinct   endpoints $a,b \in \mb{T}$ in common, but no other points. 
 
 To see that this leads to a contradiction, we represent the points 
 $a$ and $b$ by words in $W$; so $a=\pi(v)$ and
 $b=\pi(w)$, where $v=v_1v_2\ldots$ and $w=w_1w_2\ldots$ are in $W$. Since $a\ne b$ and every point in $\mb{T}$ has at most three such representations by Lemma~\ref{l:0_triple}~(ii), we can find a pair $v$ and $w$ representing $a$ and $b$ with the largest common initial word, say $v_1=w_1,\ldots, v_n=w_n$, and $v_{n+1}\ne w_{n+1}$ for some maximal $n\in \N_0$.  
 
 Let $u=v_1\ldots v_n=w_1\dots w_n$ and 
 $$t=f_u(0)=\pi(u1\dot2)=\pi(u2\dot1)=\pi(u3\dot1). $$ 
 Then $t\ne a,b$. To see this, assume that $t=a$, say. We have  
 $w_{n+1} \in \{1,2,3\}$, and so,  say $w_{n+1}=1$.
 But then 
 $a=t=\pi(u1\dot2)$ and $b=\pi(u1w_{n+2}\ldots)$. So $a$ and $b$ 
 are represented by words with the common initial segment $u1$ that is longer than $u$. This contradicts the choice of $v$ and $w$. 
 The cases $w_{n+1} =2$ or $w_{n+1}=3$ lead to a contradiction in a similar way.

 So indeed $t=f_u(0)\ne a,b$. Moreover $a=\pi (uv_{n+1}\ldots)\in 
 \mb{T}_{uv_{n+1}}\ba\{t\}$ and similarly 
 $b\in \mb{T}_{uw_{n+1}}\ba\{t\}$. Since $v_{n+1}\ne w_{n+1}$ the points $a$ and $b$ lie in different components of 
 $\mb{T}\ba\{t\}$ by Lemma~\ref{lem:trip}~(ii). So any arc joining $a$ and $b$ must pass through $t$. Hence $t\in \alpha\cap\beta$, but $t\ne a,b$. This contradicts our assumption that the arcs $ \alpha$ and $\beta$ have no other points
 than their endpoints $a$ and $b$ in common.   \qed
\end{proofof}

If $M\sub \Te$, then we denote by $\partial M\sub \Te$ the relative boundary of $M$ in $\Te$. 

 \begin{lemma}\label{p:decomp}
Let $n\in \N$ and $u\in W_n$. Then 
\begin{equation} \label{eq:markpts} 
\partial \Te_u\sub \{f_u(-1), f_u(1)\}. 
\end{equation} 
Moreover, if  $p\in \partial \Te_u$, 
then $p=f_w(0)$ for some word $w\in W_*$ of length $\le n-1$.
\end{lemma}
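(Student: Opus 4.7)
The plan is to identify boundary points of $\Te_u$ with intersection points of different tiles $\Te_v$, $v\in W_n$, and then use Lemma~\ref{l:0_triple} to describe these intersections explicitly. The computational step at the end is short enough to include precisely.

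First I would observe that if $p\in \partial \Te_u$, then $p$ is the limit of a sequence in $\Te\ba \Te_u$. Since $\Te=\bigcup_{v\in W_n}\Te_v$ by \eqref{eq:union1} and $W_n$ is finite, some tile $\Te_v$ with $v\in W_n\ba\{u\}$ contains infinitely many terms of this sequence, hence contains $p$ (since $\Te_v$ is closed). Thus $p\in \Te_u\cap \Te_v$ for some $v\in W_n$ with $v\ne u$.

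Next, write $p=\pi(uw_1)=\pi(vw_2)$ for suitable $w_1,w_2\in W$. Since $u\ne v$, the infinite words $uw_1$ and $vw_2$ are distinct, so Lemma~\ref{l:0_triple}(ii) produces a unique $w\in W_*$ with
\[ uw_1,\, vw_2\in\{w1\dot 2,\,w2\dot 1,\,w3\dot 1\}\quad \text{and}\quad p=f_w(0). \]
This $w$ is the longest common initial segment of $uw_1$ and $vw_2$; because $u$ and $v$ already differ at some position $j\le n$, the length of $w$ is at most $j-1\le n-1$, which is the second assertion.

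It remains to identify $p$ as $f_u(\pm 1)$. Let $k\coloneqq |w|\le n-1$ and examine the three cases:
\begin{itemize}
\item[] If $uw_1=w1\dot 2$, then $u_{k+1}=1$ and $u_{k+2}=\dots=u_n=2$. Since $f_2(1)=1$ and $f_1(1)=0$, iteratively applying the $f_{u_j}$ to $1$ gives $f_u(1)=f_w(0)=p$.
\item[] If $uw_1=w2\dot 1$, then $u_{k+1}=2$ and $u_{k+2}=\dots=u_n=1$. Since $f_1(-1)=-1$ and $f_2(-1)=0$, we get $f_u(-1)=f_w(0)=p$.
\item[] If $uw_1=w3\dot 1$, then similarly $f_3(-1)=0$ and $f_1(-1)=-1$ yield $f_u(-1)=f_w(0)=p$.
\end{itemize}
In every case $p\in\{f_u(-1),f_u(1)\}$, establishing \eqref{eq:markpts}. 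The only slightly delicate step is the first one, namely turning a topological boundary point into a concrete intersection with another tile $\Te_v$; once that is in hand, Lemma~\ref{l:0_triple} does the rest and the fixed-point computations for $f_1,f_2,f_3$ on $\pm 1$ are immediate.
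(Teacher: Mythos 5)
Your proof is correct, but it takes a genuinely different route from the paper's. The paper argues by induction on $n$ using the topology of the tiles: the base case $n=1$ comes from Lemma~\ref{lem:trip}~(i) (the components of $\Te\ba\{0\}$ are the $\Te_k\ba\{0\}$, hence relatively open), and the inductive step shows that $\partial\Te_{vk}\sub\{f_v(0)\}\cup\partial\Te_v$ by noting that $\Te_{vk}\ba\{f_v(0)\}$ is relatively open in $\Te_v$, then argues which two of the three candidate points actually lie in $\Te_{vk}$ according to $k\in\{1,2,3\}$. You instead bypass the induction entirely: you first reduce a boundary point to an intersection point of two distinct $n$-level tiles by a pigeonhole argument in the finite cover $\Te=\bigcup_{v\in W_n}\Te_v$, and then invoke the coding Lemma~\ref{l:0_triple}~(ii) to read off $p=f_w(0)$ together with the precise admissible forms of $uw_1$; a short fixed-point computation with $f_1, f_2, f_3$ then pins down $p=f_u(\pm 1)$. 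Your argument is more self-contained and makes explicit that $\partial\Te_u$ is governed by same-level tile intersections, which is a useful general principle; it also does not rely on Proposition~\ref{prop:CSST} (that $\Te$ is a tree) or on Lemmas~\ref{lem:vartree} and~\ref{lem:trip}, unlike the paper's version. The paper's inductive proof, by contrast, keeps the reasoning purely topological and records the auxiliary fact that $\Te_{vk}\ba\{f_v(0)\}$ is relatively open in $\Te_v$, which is in the spirit of the surrounding material on components and subtrees. Both proofs are valid; yours is arguably the cleaner path to the statement as written.
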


In particular, the set $\partial \Te_u$ contains at most two points.

\begin{proof} We prove this by induction on $n$.  First consider $n=1$. So let $u=k\in W_1=\{1,2,3\}$.   
 Then $\Te_k\ba\{0\}$ is a  component $\Te\ba\{0\}$  by Lemma~\ref{lem:trip}~(i).
 Hence Proposition~\ref{prop:CSST} and Lemma~\ref{lem:vartree}~(i) imply that 
 $\Te_k\ba\{0\}$ is a relatively  open set in $\Te$. So each of its points lies in the relative interior of  $\Te_k$ and cannot lie in $\partial \Te_k$.  Therefore,  $\partial \Te_k\sub \{0\}$. Since \begin{equation} 
0= f_\emptyset(0)=f_1(1)=f_2(-1)=f_3(-1), 
\end{equation}
 the statement is true for $n=1$. 

Suppose the statement is true for all words in $W_n$, where 
$n\in \N$. Let  $u\in W_{n+1}$ be arbitrary. Then $u=vk$, where $v\in W_n$ and $k\in \{1,2,3\}$.  By what we have just seen, the set  $\Te_k\ba \{0\}$ is open in $\Te$. Hence 
$$f_v( \Te_k\ba \{0\})=f_u(\Te)\ba\{f_v(0)\}=\Te_u\ba\{f_v(0)\}$$ is a relatively open subset  of $f_v(\Te)=\Te_v$. So if $p\in \Te_u$ is 
not an interior point of $\Te_u$ in $\Te$, then $p=f_v(0)$ or $p$ is not an interior point of $\Te_v$ in $\Te$ and hence belongs to the boundary of $\Te_v$.
This and the induction hypothesis imply that 
$$ \partial \Te_u\sub \{f_v(0)\}\cup \partial \Te_v\sub 
\{f_v(0), f_v(-1), f_v(1)\}. $$ 
From this we conclude that  each point $p\in \partial \Te_u\sub \{f_v(0)\}\cup \partial \Te_v$  
can be 
written in the form $f_w(0)$ for an appropriate word $w$ of length $\le n$. This is clear if  $p=f_v(0)$ and follows for $p \in \partial \Te_v$
 from the induction hypothesis. 

Now $\Te_u=f_u(\Te)$ is compact and so closed in $\Te$. Hence 
$\partial \Te_u\sub \Te_u$. On the other hand, $\Te_u$ contains only two of the points  $f_v(0)$, $f_v(-1)$, $f_v(1)$. Indeed, if 
$k=1$, then  $1\not\in \Te_1\sub H_1$, and so 
$f_v(1)\not\in f_v(\Te_1)=\Te_u$. It follows that $ \partial \Te_u\sub \{ f_v(-1), f_v(0)\}. $
Note that $f_1(-1)=-1$ and $f_1(1)=0$, and so 
$$f_v(-1)=f_v(f_1(-1))=f_u(-1) \text{ and }
 f_v(0)=f_v(f_1(1))=f_u(1).  $$ 
Hence $$\partial \Te_u\sub \{ f_u(-1), f_u(1)\}.$$

Very similar considerations show that if $k=2$, then 
$$\partial \Te_u\sub \{f_v(0), f_v(1)\}=\{f_u(-1),f_u(1)\},$$ and if $k=3$, then 
$$\partial \Te_u\sub \{f_v(0)\}=\{f_u(-1)\}.$$ The statement follows. 
 \qed
\end{proof}

The next lemma shows that all branch points of $\mb{T}$ are of the form $f_u(0)$ with  $u\in W_{\ast}$.

\begin{lemma} \label{lem:branchCSST} The branch points of $\Te$ are exactly the points of the form  $t=f_u(0)$ for   some finite word $u\in W_{\ast}$. They are 
triple points of $\Te$. 
\end{lemma}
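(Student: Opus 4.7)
The plan is to handle the two directions of the equivalence separately. For the easy direction, that every $f_u(0)$ with $u\in W_{\ast}$ is a triple point, I would simply invoke Lemma~\ref{lem:trip}~(ii), which asserts that $\Te\ba\{f_u(0)\}$ has exactly three components; so $\nu_\Te(f_u(0))=3$ by definition, meaning $f_u(0)$ is a triple point.

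For the converse, suppose $p\in \Te$ is not of the form $f_u(0)$ for any $u\in W_\ast$. By Lemma~\ref{l:0_triple}~(ii) this point has a unique representation $w=w_1w_2\ldots\in W$ under $\pi$. Setting $u_n=w_1\ldots w_n\in W_n$, the relation \eqref{pifrel} gives $p=f_{u_n}(\pi(w_{n+1}w_{n+2}\ldots))\in \Te_{u_n}$ for every $n$. Lemma~\ref{p:decomp} tells us that every point of $\partial \Te_{u_n}$ has the form $f_s(0)$ for some $s\in W_\ast$ of length at most $n-1$, so our hypothesis on $p$ forces $p\not\in \partial \Te_{u_n}$. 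Consequently $p$ lies in the relative interior of $\Te_{u_n}$ in $\Te$, while by \eqref{eq:diam} we have $\diam(\Te_{u_n})=2^{1-n}\to 0$ as $n\to\infty$.

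Now I would argue by contradiction that $\nu_\Te(p)\le 2$. Assume $\Te\ba\{p\}$ has at least three components, yielding three distinct branches $B_1,B_2,B_3$ of $p$ in $\Te$; each is a subtree of positive diameter. Choose $n$ so large that $\diam(\Te_{u_n})<\min_k\diam(B_k)$; then for each $k$ there exists a point $x_k\in B_k\ba\Te_{u_n}$. Since $B_k$ is a subtree, the arc joining $p$ and $x_k$ lies in $B_k$, and since it starts in the relative interior of the closed set $\Te_{u_n}$ and ends outside it, continuity produces a point $p_k\in B_k\cap\partial\Te_{u_n}$. These $p_k$ must be pairwise distinct: if $p_j=p_k$ for $j\ne k$, then this common point would lie in $B_j\cap B_k=\{p\}$, yet $p\not\in\partial\Te_{u_n}$. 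Hence $\partial \Te_{u_n}$ contains three distinct points, contradicting $|\partial \Te_{u_n}|\le 2$ from Lemma~\ref{p:decomp}.

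The main subtlety I anticipate is the boundary-crossing step producing each $p_k$: one must formalize that an arc from a point in the relative $\Te$-interior of the closed set $\Te_{u_n}$ to a point outside $\Te_{u_n}$ must meet $\partial \Te_{u_n}$. This is a routine application of continuity (via the supremum of parameter values mapped into $\Te_{u_n}$), and once it is in place the rest of the argument is clean bookkeeping of three branches against at most two boundary points.
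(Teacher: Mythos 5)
Your proposal is correct and follows essentially the same strategy as the paper's proof: both combine Lemma~\ref{lem:trip}~(ii) for the forward direction with the observation that a branch point not of the form $f_u(0)$ would force at least three distinct points into some $\partial\Te_u$, contradicting Lemma~\ref{p:decomp}. The only cosmetic differences are that you locate the small tile containing $p$ via its (unique) coding and parametrize the boundary-crossing arc from $p$ outward, whereas the paper chooses a tile via \eqref{eq:union1} and travels from the $x_k$ inward; neither difference affects the substance.
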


\begin{proof} By Lemma~\ref{lem:trip}~(ii) we know that each point 
$t=f_u(0)$ with $u\in W_{\ast}$ is a triple point of the tree $\Te$.
We have to show that there are no other branch points of $\Te$. 
 
 So suppose that $t$ is a branch point of $\mb{T}$, but $t\ne f_u(0)$ for each  $u\in W_{\ast}$. Then we can find (at least) three distinct components $U_1$, $U_2$, $U_3$ of $\mb{T}\ba\{t\}$.  Pick a point $x_k\in U_k$  and  choose  $n\in \N$ such that 
$ |x_k-t|> 2^{1-n}$ 
for $k=1,2,3$. By \eqref{eq:union1} we can find $u\in W_n$ such that $t\in \mb{T}_u$. 
Then $t$ is distinct from the points in the relative  boundary $\partial \Te_u$, because they have the form $f_w(0)$ for some $w\in W_\ast$
 (see Lemma~\ref{p:decomp}). Hence $t$ is contained in the relative interior of $\mb{T}_u$ in $\mb{T}$. 
Moreover,  $\diam(\mb{T}_u)=2^{1-n}$, and so $x_k\not \in \mb{T}_u$. For $k=1,2,3$ let $\alpha_k$ be the arc in $\Te$ joining $x_k$ and $t$. As we travel from $x_k$ to $t$ along $\alpha_k$, there exists a first point $y_k \in \mb{T}_u$. Then $y_k\in \partial \Te_u$ and so $y_k\ne t$. Let   $\beta_k$ be the subarc of $\alpha_k$  with endpoints  $x_k$ and $y_k$. Then $\beta_k$ is a connected set in  $\Te\ba\{t\}$. Since $x_k\in \beta_k$, it follows that $\beta_k\sub U_k$, and so $y_k\in U_k$. 

This shows that  the points $y_1, y_2, y_3$ are distinct and contained in the relative boundary $\partial \Te_u$. This is impossible,  because by  Lemma \ref{p:decomp} the set $\partial \Te_u$ consists of at most two points.  \qed
\end{proof}

We can now prove Proposition~\ref{prop:T123} which shows that 
  $\Te$ satisfies the conditions in Theorem~\ref{criterion} and  belongs to the class of trees $\T_{3}$. 

\begin{proofof}{\em Proof of Proposition~\ref{prop:T123}.}
 By Lemma~\ref{lem:branchCSST} each branch point of $\Te$ is a triple point and each set $\Te_u$ for  $u\in W_n$ and $n\in \N$ contains the triple point $t=f_u(0)$. The sets $\Te_u$, $u\in W_n$, 
cover $\Te$ and have small diameter for $n$ large. It follows that the triple points are dense in $\Te$.  \qed
\end{proofof}

In order to show that  $\Te$ is a quasi-convex subset of 
$\C$, we first require a lemma. 

\begin{lemma}\label{lem:0dist}
There exists a constant $K>0$ such that if $p\in \Te$ and  $\alpha$ is the arc in $\Te$ joining $0$ and $p$, then  
\begin{equation}\label{lem:len2}
 \length(\alpha) \le K|p|.
\end{equation}
\end{lemma}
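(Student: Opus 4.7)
The plan is to prove the lemma in two stages. In the first, I establish by self-similarity that for every $p\in\Te$ the unique arc $\alpha_{0,p}$ in $\Te$ joining $0$ and $p$ has length at most $1$ (whence $\diam_\varrho(\Te)\le 2$). In the second, I use the self-similar decomposition to find, for each $p\in\Te\setminus\{0\}$, a small sub-tree $\Te_u$ containing both $0$ and $p$, bound $\length(\alpha_{0,p})$ by the path-metric diameter of $\Te_u$, and bound $|p|$ from below in terms of the scale of $\Te_u$.

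For the first stage, I would show by induction on $n$ that $\length(\alpha_{0,p})\le 1$ for every $p\in J_n$, the piecewise-linear tree from Proposition~\ref{prop:CSSTgeo}. The base case $n=0$ is immediate since $J_0=[-1,1]$ and $\alpha_{0,p}=[0,p]$. For the inductive step, any $p\in J_{n+1}$ lies in some $f_k(J_n)$; since $0=f_k(e_k)$ with $e_k:=f_k^{-1}(0)\in\{-1,1\}$, and $f_k(J_n)$ is a subtree of $J_{n+1}$, the arc $\alpha_{0,p}$ lies in $f_k(J_n)$ and is the $f_k$-image of the arc $\alpha_{e_k,q}$ in $J_n$ with $q:=f_k^{-1}(p)$. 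By the tree triangle inequality,
$$\length(\alpha_{e_k,q})\le\length(\alpha_{e_k,0})+\length(\alpha_{0,q})\le 1+1=2,$$
using that $\alpha_{e_k,0}$ is the Euclidean segment $[0,\pm 1]\subset J_n$ of length $1$ and the inductive hypothesis applied to $q\in J_n$. Scaling by $\tfrac12$ yields $\length(\alpha_{0,p})\le 1$. To pass from the dense set $J=\bigcup_n J_n$ to arbitrary $p\in\Te$, approximate $p=\pi(w)$ by $p_n:=f_{w_1\cdots w_n}(0)\in J_n$: the arc-length parametrizations of the arcs $\alpha_{0,p_n}$ are uniformly $1$-Lipschitz on intervals of length at most $1$, so a standard Arzel\`a--Ascoli argument produces a Lipschitz path from $0$ to $p$ in $\Te$ whose image must contain $\alpha_{0,p}$, yielding $\length(\alpha_{0,p})\le 1$. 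A further application of the tree triangle inequality gives $\diam_\varrho(\Te)\le 2$.

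For the second stage, given $p\in\Te\setminus\{0\}$, pick a representation $w=w_1w_2\ldots$ of $p$ and let $n\ge 1$ be the length of the longest common prefix of $w$ with any of the three representations $1\dot 2$, $2\dot 1$, $3\dot 1$ of $0$; this $n$ is finite because $p\ne 0$. Set $u=w_1\cdots w_n$. Both $0$ and $p$ lie in $\Te_u=f_u(\Te)$, so $\alpha_{0,p}\subset\Te_u$ and $\length(\alpha_{0,p})\le\diam_\varrho(\Te_u)=\diam_\varrho(\Te)/2^n\le 2/2^n$. On the other hand, by maximality of $n$, $0\notin\Te_{uw_{n+1}}$ while $p\in\Te_{uw_{n+1}}$; writing $e_u:=f_u^{-1}(0)\in\{-1,1\}$, this gives
$$|p|=\tfrac1{2^n}\,\bigl|f_u^{-1}(p)-e_u\bigr|\ge\tfrac1{2^n}\,\dist(e_u,\Te_{w_{n+1}}).$$
The only possible pairs $(e_u,w_{n+1})$ are $(1,1)$, $(1,3)$, $(-1,2)$, $(-1,3)$, and in each case $e_u\notin\Te_{w_{n+1}}$: Lemma~\ref{l:0_triple}~(ii) implies $\pi^{-1}(1)=\{\dot 2\}$ and $\pi^{-1}(-1)=\{\dot 1\}$, so $1$ belongs only to those $\Te_v$ with $v$ a prefix of $\dot 2$, and similarly for $-1$. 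Hence each of the four distances $\dist(e_u,\Te_{w_{n+1}})$ is a strictly positive constant; letting $\delta$ be their minimum and combining the two estimates yields $\length(\alpha_{0,p})\le (2/\delta)|p|$, so $K=2/\delta$ works.

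The main obstacle is extending the bound $\length(\alpha_{0,p})\le 1$ from the piecewise-linear skeleton $J$ to all of $\Te$, which calls for the Arzel\`a--Ascoli/lower-semicontinuity step sketched above; the remainder is a clean application of the self-similar structure and the elementary fact that the four distances $\dist(e_u,\Te_{w_{n+1}})$ appearing in the analysis are all bounded below by a universal positive constant.
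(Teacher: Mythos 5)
Your proof is correct, but it routes the length bound through an extra reduction that the paper avoids. Where the paper constructs an explicit rectifiable path from $0$ to $p$ as a concatenation of line segments $[a_k,a_{k+1}]\sub\Te$ with $\sum_k\length([a_k,a_{k+1}])=2^{1-n}$, you first establish the global bound $\diam_\varrho(\Te)\le 2$ by induction on the skeleta $J_n$ and then pass from $J=\bigcup_n J_n$ to $\Te$ by an Arzel\`a--Ascoli/lower-semicontinuity argument, and finally invoke the scaling $\diam_\varrho(\Te_u)=2^{-n}\diam_\varrho(\Te)$. Your route does produce a clean reusable fact (the $\varrho$-diameter bound), but the Arzel\`a--Ascoli step is the one place where extra care is needed: the limit map is $1$-Lipschitz and its image is a subtree containing $0$ and $p$, so it contains $\alpha_{0,p}$, and then one needs $\length(\alpha_{0,p})\le\length(\ga)$ (e.g.\ via $\mathcal H^1$ and the fact that Lipschitz images do not increase $\mathcal H^1$); the paper's explicit construction sidesteps the compactness argument entirely, though it too ultimately relies on the fact that a rectifiable path joining $a$ and $b$ in a tree has length at least that of $[a,b]$. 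For the lower bound on $|p|$, the two arguments are essentially the same: the paper reduces to the case $w_1=3$, locates the break index $n$ with the representation $3\dot 1$ of $0$, and uses $\dist(-1,H_2\cup H_3)>0$; you locate $n$ as the longest common prefix of $w$ with any of $1\dot 2,2\dot 1,3\dot 1$, identify $e_u=f_u^{-1}(0)\in\{-1,1\}$, and use the four positive quantities $\dist(1,\Te_1),\dist(1,\Te_3),\dist(-1,\Te_2),\dist(-1,\Te_3)$. Both give $|p|\ge c/2^n$ and $\length(\alpha)\le 2/2^n$, hence $K=2/c$. So the proposal is valid but takes a more abstract, slightly more laborious detour than the paper's direct path construction.
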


In particular, the arc $\alpha$ is a rectifiable curve.

\begin{proof} Let $p\in \Te$ be arbitrary. We may assume that 
$p\ne 0$. Then $p=\pi(w)$ for some $w=w_1w_2\ldots \in W$. 
For simplicity we assume $w_1=3$. The other cases, $w_1=1$ and $w_1=2$, are very similar and we will only present the details for 
$w_1=3$.

 Since $p\ne 0=\pi(3\dot 1)$, we have 
$w_2w_3\ldots \ne \dot 1$. Hence there exists a smallest number 
  $n\in \N$ such that 
such that $w_{n+1}\ne 1$. Let $v=w_1\dots w_{n}$ be the initial word of $w$ and $w'=w_{n+1}w_{n+2}\ldots $ be the tail of $w$. The word $v$ has  the form $v=31\dots 1$, where the sequence  of  $1$'s could possibly be empty.
Note that $q\coloneqq \pi(w')\in \Te_{w_{n+1}}\sub \Te_2\cup\Te_3\sub H_2\cup
H_3$. 
Since  
$$ c_0\coloneqq \dist(-1, H_2\cup H_3)>0$$(see Figure~\ref{f:Hsets}), for the distance of $q$ and $-1$ we  have 
$|q+1|\ge c_0$. We also have  $f_v(q)=p$, and $f_v(-1)=0$, because 
$f_1(-1)=-1$ and $f_3(-1)=0$. It follows that 
\begin{equation}\label{eq:p} 
 |p|=|f_v(q)-f_v(-1)|=\frac 1{2^n}  |q+1|\ge \frac {c_0}{2^n}. 
\end{equation}

Now  define $a_0=0=f_v(-1)$ and  $a_k= 
f_{vw_{n+1}\dots w_{n+k-1}}(0)$ for $k\in \N$ (here $w_{n+1}\dots w_{n+k-1}=\emptyset$ for $k=1$). Note that then 
$$ a_1=f_v(0)=f_{w_1\dots w_n}(0)=f_{31\dots 1}(0)=
f_3(2^{1-n}-1)=i/2^n, $$ 
and so 
$$[a_0, a_1]= [f_v(-1), f_{v}(0)]= [0, i/2^{n}]\sub [0,i]
\sub \Te.$$ 
This also shows that $\length ([a_0, a_1])= 1/2^n. $

For  $k\in \N$ we have $f_{w_{n+k}}(0)\in \{-1/2, 1/2, i/2\}$, and 
$[0, f_{w_{n+k}}(0)]\sub \Te$.
This implies that 
$$[a_k, a_{k+1}]= f_{vw_{n+1}\dots w_{n+k-1}}\big([0, f_{w_{n+k}}(0)]\big) \sub \Te$$ and
$\length  ([a_k, a_{k+1}])= 1/{2^{n+k}}$ for  $k\in \N$. 
Since $\lim_{k\to \infty}a_k=\pi(w)=p$, we can   concatenate the intervals $[a_k, a_{k+1}]\sub \Te$ for $k\in \N_0$, add the endpoint 
$p$, and obtain  a path $\ga$ in $\Te$ that joins $0$ and $p$  with  
$$ \length(\ga)= \sum_{k=0}^\infty \frac1{2^{n+k}}=\frac1{2^{n-1}}.$$ 
The (image of the) path $\ga$ will contain the unique arc $\alpha$ 
in $\Te$ joining $0$ and $p$ and so
$\length(\alpha)\le 1/{2^{n-1}}$. If we combine this with \eqref{eq:p},  then inequality \eqref{lem:0dist} follows with $K=2/c_0$.  \qed
\end{proof}


We can now  show  that $\Te$ is indeed a quasi-convex subset of $\C$.

\begin{proofof}{\em Proof of Proposition~\ref{prop:quasicon}.}
Let $a,b\in \Te$ be arbitrary. We may assume that 
$a\ne b$. Then there are words $u=u_1u_2\ldots\in W$ and 
$v=v_1v_2\ldots \in W$ such that $a=\pi(u)$ and $b=\pi(v)$. Since 
$a\ne b$, we have $u\ne v$ and so there exists a smallest number $n\in \N_0$ such that $u_1=v_1, \dots, u_n=v_n$ and $u_{n+1}\ne v_{n+1}$. Let $w=u_1\dots u_n=v_1\dots v_n$,  
$u'=u_{n+1}u_{n+2}\ldots \in W$ and $v'=v_{n+1}v_{n+2}\ldots \in W$. We define $a'=\pi(u')$ and $b'=\pi(v')$. 
Set $k= u_{n+1}$ and $\ell=v_{n+1}$. Then $k\ne \ell$, $a'\in \Te_k\sub H_k$, and $b'\in  \Te_\ell\sub H_\ell$. We now use the following
elementary geometric estimate: there exists a constant $c_1>0$ such that 
$$ |x-y| \ge c_1(|x|+|y|), $$ 
whenever $x\in H_k$, $y\in H_\ell$, $k,\ell\in \{1,2,3\}$, $k\ne \ell$. 
Essentially, this follows from the fact that the sets $H_1$, $H_2$, $H_3$ are contained in closed sectors in $\C$ that are pairwise disjoint except for the common point $0$. 

In our situation, this means that 
$$ |a'-b'|\ge c_1 (|a'|+|b'|). $$ 
Let $\sigma$ and $\tau$ be the arcs in $\Te$ joining $0$ to $a'$ and $b'$, respectively. 
Then  $\sigma\cup \tau$ contains the arc $\alpha'$  in $\Te$ joining $a'$ and $b'$. Then it follows from Lemma~\ref{lem:0dist} that 
\begin{equation}\label{eq:prelim}
\length (\alpha')\le \length (\sigma)+\length (\tau)\le K(|a'|+|b'|)
\le L |a'-b'|
\end{equation}
with $L \coloneqq  K/c_1$.  

For the  similarity $f_w$ we have  $f_w(a')=a$ and $f_w(b')=b$. 
Since $f_w(\Te)\sub \Te$, it follows that $\alpha\coloneqq f_w(\alpha')$ is the unique arc in $\Te$ joining $a$ and $b$.
Since $f_w$ scales distances by a fixed factor (namely $1/2^n$), 
\eqref{eq:prelim} implies the desired inequality 
$ \length(\alpha)\le  L |a-b|. $  \qed
 \end{proofof}  
 
As we already discussed in the introduction, by Proposition~\ref{prop:quasicon} we can define a new metric $\varrho$ on $\Te$ 
by setting 
\begin{equation}\label{def:rho}
 \varrho(a,b)=\length(\alpha) 
 \end{equation} 
for $a,b\in \Te$,  
where $\alpha$ is the unique arc in $\Te$ joining  $a$ and $b$. 
Then the metric space $(\Te, \varrho)$ is  geodesic, and we have 
$$|a-b|\le \varrho(a,b)\le L|a-b|$$
for $a,b\in \Te$, where $L$ is the constant in Proposition~\ref{prop:quasicon}. This implies that   the metric spaces $\Te$ (as equipped with the Euclidean metric) and $(\Te, \varrho)$ are bi-Lipschitz equivalent by the identity map. 

We  now want to reconcile Definition~\ref{def:CSST} with the construction of the CSST as an abstract metric space  outlined in the introduction.  We require an auxiliary statement.

\begin{lemma}\label{lem:pairwisedist}
Let $n\in \N_0$. Then the sets 
\begin{equation}\label{eq:setsfw}
 f_w(\Te\ba\{-1\}), \ w\in W_n,
 \end{equation}
are pairwise disjoint and their union is equal to $\Te\ba\{-1\}$. 
\end{lemma}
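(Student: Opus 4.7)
The plan is to prove the lemma by induction on $n$, reducing everything to the case $n=1$, where the statement can be checked by direct computation using the explicit formulas for $f_1,f_2,f_3$ together with \eqref{e:TjTk0}.

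The base case $n=0$ is trivial since $W_0=\{\emptyset\}$. For the case $n=1$, I would compute $f_1(-1)=-1$, $f_2(-1)=0$, $f_3(-1)=0$, which gives
\[
f_1(\Te\setminus\{-1\})=\Te_1\setminus\{-1\},\ \ f_2(\Te\setminus\{-1\})=\Te_2\setminus\{0\},\ \ f_3(\Te\setminus\{-1\})=\Te_3\setminus\{0\}.
\]
Pairwise disjointness then follows from \eqref{e:TjTk0}: any intersection of two of these sets is contained in a $\Te_k\cap\Te_\ell=\{0\}$ with $k\ne\ell$, and the point $0$ has been removed from at least one of the sets in each pair. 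For the union equation, note that $\Te=\Te_1\cup\Te_2\cup\Te_3$ while $-1\notin\Te_2\cup\Te_3$ (since $-1\notin H_2\cup H_3$, as $H_2$ and $H_3$ are the reflected/rotated copies of $H$ based at $0$ and $1$); hence the only point of $\Te$ removed in passing to the union of the three sets above is $-1$.

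For the inductive step, I would write each $w\in W_{n+1}$ uniquely as $w=vk$ with $v\in W_n$ and $k\in\{1,2,3\}$, and use $f_{vk}=f_v\circ f_k$. Using the $n=1$ case, the union calculation becomes
\[
\bigcup_{w\in W_{n+1}}f_w(\Te\setminus\{-1\})=\bigcup_{v\in W_n}f_v\Bigl(\bigcup_{k=1}^{3}f_k(\Te\setminus\{-1\})\Bigr)=\bigcup_{v\in W_n}f_v(\Te\setminus\{-1\})=\Te\setminus\{-1\},
\]
by the inductive hypothesis. For disjointness, I split on whether the prefixes $v,\tilde v\in W_n$ of two distinct words $vk,\tilde v\tilde k\in W_{n+1}$ coincide. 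If $v\ne\tilde v$, I use the $n=1$ inclusion $f_k(\Te\setminus\{-1\})\subseteq\Te\setminus\{-1\}$ to conclude $f_{vk}(\Te\setminus\{-1\})\subseteq f_v(\Te\setminus\{-1\})$ and similarly for $\tilde v\tilde k$; disjointness then comes from the inductive hypothesis. If $v=\tilde v$ but $k\ne\tilde k$, injectivity of $f_v$ on $\C$ lets me pull the intersection inside, and the $n=1$ disjointness finishes it.

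There is no real obstacle here: the content is entirely concentrated in the $n=1$ case, which is a direct verification from the defining formulas \eqref{maps} and the basic geometric fact \eqref{e:TjTk0}. The only minor subtlety is checking at the base case that $-1\notin\Te_2\cup\Te_3$, which I would justify from \eqref{eq:baseincl} together with the explicit description of the sets $H_k$ shown in Figure~\ref{f:Hsets}.
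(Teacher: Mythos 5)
Your proof is correct and follows essentially the same route as the paper's: induction on $n$, with the content concentrated in the three-way decomposition $\Te\setminus\{-1\}=(\Te_1\setminus\{-1\})\sqcup(\Te_2\setminus\{0\})\sqcup(\Te_3\setminus\{0\})$ derived from the explicit values $f_1(-1)=-1$, $f_2(-1)=f_3(-1)=0$, together with \eqref{e:TjTk0} and the fact that $-1\notin H_2\cup H_3$. The paper phrases the inductive step by applying this decomposition to each $f_u(\Te\setminus\{-1\})$ at once rather than splitting on equal versus distinct prefixes, but the argument is the same.
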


\begin{proof} This is proved by induction on $n\in \N_0$. For $n=0$ the statement is clear, because then 
$f_\emptyset(\Te\ba\{-1\})=\Te\ba\{-1\}$ is the only set in \eqref{eq:setsfw}. 

Suppose the statement is true for some $n\in \N$. Then for each 
$u\in W_n$ the sets 
\begin{align*} 
f_{u1}(\Te\ba\{-1\})&= f_u(\Te_1\ba\{-1\}),\\
f_{u2}(\Te\ba\{-1\})&= f_u(\Te_2\ba\{0\}),\\
f_{u3}(\Te\ba\{-1\})&= f_u(\Te_3\ba\{0\})
\end{align*}
 provide a decomposition of $f_u(\Te\ba\{-1\})$ into three pairwise disjoint subsets 
as follows from \eqref{eq:union1} for $n=1$, \eqref{eq:baseincl},  and \eqref{e:TjTk0}.
This and the induction hypothesis imply that the sets
$ f_{uk}(\Te\ba\{-1\})$, $u\in W_n$, $k\in \{1,2,3\}$, and hence the sets   $f_w(\Te\ba\{-1\})$, $w\in W_{n+1}$, are
pairwise disjoint, and their union is equal to $\Te\ba\{-1\}$.  This is the inductive  step, and the statement follows. \qed
\end{proof}

We now consider the sets $J_n$, $n\in \N_0$, 
as in Proposition~\ref{prop:CSSTgeo}. Here $J_0=I=[-1, 1]$ is a line segment of length $2$.  Since $(-1,1]\sub \Te\ba\{-1\}$, the previous lemma implies that for each $n\in \N_0$, 
the sets $f_w((-1,1])$, $w\in W_n$, are pairwise disjoint half-open line segments of length $2^{1-n}$. The union of the closures 
$f_w([-1,1])=f_w(I)$, $w\in W_n$, of these line segments is the set  $J_n$. In particular, $J_n$ consists of $3^n$ line segments of length $2^{1-n}$ with  pairwise disjoint interiors. 

Note that for $w\in W_n$ we have  
\begin{align*}  f_{w1}((-1,1])\cup  f_{w2}((-1,1])\cup f_{w3}((-1,1]) &=f_w((-1,0])\cup f_w((0,1])
\cup f_w((0,i])\\
&=  f_{w}((-1,1])\cup  f_w([0,i]). 
\end{align*} 
An induction argument based on this   shows that for $n\in \N_0$ we have a decomposition 
\begin{equation}\label{eq:decompJn} 
J_n\ba\{-1\} =\bigcup_{w\in W_n} f_w((-1,1]) 
\end{equation}
of $J_n\ba\{-1\}$ into  the pairwise disjoint sets   $f_w((-1,1])$, $w\in W_n$.

 In the passage from  $J_n$ to $J_{n+1}$ we can think of each line segment  $f_w(I)=f_w([-1,1])$  as being replaced with  
$$  f_{w1}(I)\cup  f_{w2}(I) \cup f_{w3}(I) =f_w([-1,0])\cup f_w([0,1])
\cup f_w([0,i]). $$
So  $f_w([-1,1])$ is split into two intervals $f_w([-1,0])$ and 
$f_w([0,1])$, and  at its midpoint $f_w(0)$ a  new interval 
$f_w([0,i])$ is ``glued"  to $f_w(0)$. This is exactly the procedure described in the introduction. Note  that Lemma~\ref{lem:pairwisedist} implies that these new intervals 
$f_w([0,i])\sub f_w(\Te\ba\{-1\})$, $w\in W_n$, are pairwise disjoint. Moreover, each such interval  $f_w([0,i])$ meets the set $J_n$ only in the point $f_w(0)$ and in no other point of $J_n$.   Indeed, by 
\eqref{eq:decompJn} and Lemma~\ref{lem:pairwisedist} we have 
\begin{align*} 
f_w((0,i])\cap J_n &= f_{w3}((-1,1])\cap J_n =   
f_{w3}((-1,1])\cap J_n\ba\{-1\}\\
&=  f_{w}((0,i])\cap \bigcup_{u\in W_n} f_u((-1, 1])\\
\sub  \big(f_{w}((0,i])&\cap  f_w((-1, 1])\big)\cup 
 \bigcup_{u\in W_n,\, u\ne w}   f_{w}(\Te\ba\{-1\})\cap 
 f_u(\Te\ba\{-1\})=\emptyset. 
\end{align*}

 It is clear that $J_n$ is compact, and one can show by induction based on the replacement procedure just described  that $J_n$ is connected. Hence each $J_n$ is a subtree of $\Te$ by Lemma~\ref{lem:subtree}. The  metric $\varrho$ in \eqref{def:rho} restricted to  $J_n$, $n\in \N_0$, and to  $J\coloneqq \bigcup_{n\in N_0} J_n$  is just  the natural Euclidean path metric on  these sets. In particular, 
$\varrho$  is a geodesic metric on $J$. These considerations imply that $(J_n, \varrho)$ for $n\in \N$, and hence  $(J, \varrho)$, are isometric to the abstract versions of these spaces defined in the introduction. 

By Proposition~\ref{prop:CSSTgeo} the tree $\Te$ is the equal 
to  closure $\overline J$ in $\C$. Since on $ J$ the Euclidean metric and the metric $\varrho$ are comparable, 
the set  $\Te=\overline J$ is homeomorphic to the space obtained from the completion of the geodesic metric metric space $(J, \varrho)$. This is how we described the CSST as an abstract metric space in the introduction.

%
%
%

 \section{Decomposing  trees in $\T_m$} \label{s:proof}
   
 In the previous   section we have seen that for each $n\in \N$ the CSST admits a de\-com\-position   
$$\Te=\bigcup_{u\in W_n} \Te_u$$ 
into subtrees. We will now consider an \arb\ tree in $\T_m$, $m\in \N$, $m\ge 3$,  
and find  similar decompositions into subtrees. Our goal is to have decompositions for each level $n\in \N$ so that the conditions  
(i)--(iii) in  Proposition~\ref{nested} are satisfied. 

Note that each tree  class $\mc{T}_{m}$  is non-empty. 
Namely, for each $m\in \N$, $m\ge 3$, a tree in $\T_m$ can be obtained by essentially the same method as for the construction of the CSST as an abstract metric space outlined in the introduction. The only difference is that instead of gluing one line segment  of length $2^{-n}$
to the midpoint $c_s$ of a line segment $s$ of length $2^{1-n}$ obtained in the $n$th step, we glue  endpoints of  
$m-2$ such segments  to $c_s$.  Since from a purely logical point of view we will not need the fact that
$\mc{T}_{m}$ is non-empty for the proof of Theorem~\ref{t:infinite},  
 we will skip further  details.

We now fix $m\in \N$, $m\ge 3$,  for the rest of this section. We consider the alphabet $\mathcal{A}=\{1,2, \dots, m\}$. In the following, words will  contain only letters in this fixed alphabet and we use the simplified notation for the sets of words
$W$, $W_n$, $W_*$  as discussed in Section~\ref{s:CSST}.

Let $T$ be an arbitrary  tree in the class  $\T_{m}$.  We will now  define subtrees $T_u$ of $T$ for all levels $n\in \N$ and all $u\in W_n$. The boundary $\partial T_u$ of $T_u$ in $T$ will consist of one or two points that are leaves of $T_u$ and branch points of $T$. We  consider each point  in  $\partial T_u$ as a  {\em marked leaf} in $T_u$ and will assign to it an appropriate 
sign  $-$ or $+$ so that if there are two marked leaves in $T_u$, then they carry different  signs.  Accordingly,  we refer to the  points in $\partial T_u$ as the {\em signed marked leaves} of $T_u$.  The same point may carry different signs in different subtrees.  We write $p^-$ if a marked leaf $p$ of $T_u$  carries the sign $-$ and $p^+$ if it 
carries the sign $+$. To refer to this sign, we also write $\sgn(p, T_u)=-$ in the first and $\sgn(p, T_u)=+$ in the second case. 
If  $T_u$ has exactly one marked leaf, we call $T_u$ a  {\em leaf-tile} and if there are two marked leaves an {\em arc-tile}.

The reason why we want to use these markings is that it will help us to consistently label the  subtrees so that if another tree $S$ in $\T_m$ is decomposed by the same  procedure, then we obtain 
decompositions of our trees $T$ and $S$ into  subtrees on all levels $n$ that satisfy the analogs of \eqref{eq:iff1}  and \eqref{eq:iff2} (here $u\in W_n$ will play the role of the index $i$ on each level $n$). While
 \eqref{eq:iff1} is fairly straightforward to obtain,   \eqref{eq:iff2} 
 requires a more careful approach and this is where the markings will help us (see Lemma~\ref{lem:condtrue2}~(ii) and its proof).

 For the construction we will use an inductive procedure on $n$. 
 As in Section~\ref{sec:toptree} (see  \eqref{eq:height} and the discussion before Lemma~\ref{finite}), 
 for each branch point $p\in T$, we let $H_T(p)$ be its height, i.e.,  the diameter of the third largest  branch of $p$ in $T$. If $\delta>0$, then by Lemma~\ref{finite} there are only finitely many branch points $p$ of 
 $T$ with  height  $H_T(p)>\delta$, and in particular there is one for which this quantity is maximal. 
 
For the first step $n=1$, we  choose a branch   point $c$ of $T$  with maximal height $H_T(c)$.  Since $T$ is in the class $\T_m$, this branch  point $c$ has $m=\nu_T(c)$  branches in $T$. So we can enumerate the distinct branches by the letters in our alphabet 
 as    $T_k$, $k\in \mathcal{A}$.  
 
We choose $c$ as the signed marked leaf in each  $T_k$,   
 where we set $\sgn(c, T_1)=+$ and $\sgn(c, T_k)=-$ for $k\ne 1$. So the set of signed marked leaves is $\{c^+\}$ in $T_1$ and $\{c^-\}$ in $T_k$, $k\ne 1$. 
 Note that $\partial T_k=\{c\}$ as follows from Lemma~\ref{lem:vartree}~(ii) and that $c$ is indeed a leaf in $T_k$ by  Lemma~\ref{lem:branch}  for each $k\in \mathcal{A}$. 

 Suppose that for some $n\in \N$ and all $u\in W_n$ we have constructed 
   subtrees $T_u$ of $T$ such that $\partial T_u$ consists of one or two signed marked leaves of $T_u$  that are  branch points of $T$. We will now construct the subtrees of the $(n+1)$-th 
   level as follows
 by subdivision of the trees $T_u$.

 Fix  $u\in W_n$. To decompose $T_u$ into subtrees, we will 
 use a suitable branch point $c$ of $T$ in $T_u\ba\partial T_u$. The choice of  $c$ depends on whether $\partial T_u$ contains one or two elements, that is, whether $T_u$ is a leaf-tile or an arc-tile. We will explain this precisely below, but first record some facts that are true in both cases. 
 
 Since $c\in T_u\ba\partial T_u$ is an interior point of $T_u$, 
there is a bijective correspondence between the branches of $c$ in $T$ and in $T_u$ (see Lemma~\ref{lem:subbranches}). So $\nu_{T_u}(c)=\nu_T(c)=m$, and we can label the distinct branches 
 of $c$ in $T_u$  by $T_{uk}$, $k\in \mathcal{A}$. We will 
  choose these labels depending on the signed marked leaves of $T_u$. Among other things,  if $T_u$ has a marked leaf $p^-$, then $p$ is passed to 
 $T_{u1}$ with the same sign. Similarly, a marked leaf $p^+$ of $T_u$
 is passed to $T_{u2}$ with the same sign. We will momentarily explain this in more detail (see the {\em Summary} below).  
 
  In any case, we   have 
\begin{equation}\label{eq:piecedecomp}
T_u=\bigcup_{k\in  \mathcal{A}} T_{uk}.
\end{equation}
Each set $T_{uk}$ is a subtree of $T_u$ and hence also of $T$. We call these subtrees the {\em children} of $T_u$ and $T_u$ the {\em parent} of its children.  Note that two distinct children of $T_u$ have only the point $c$ in common and no other points. 

Before we say more about the precise labelings of the children of $T_u$ and their signed leaves, we first want to identify the boundary of each child;  namely, we want to show that 
\begin{equation}\label{eq:bound}
 \partial T_{uk}=  \{c\}\cup   (\partial T_u\cap T_{uk})
\end{equation}
for each $k\in \mathcal{A}$.

To see this, first note that  $T_{uk}$ is a subtree of $T$.  Hence 
$T_{uk}$  contains all of its boundary points and so $\partial T_{uk}\sub T_{uk}$.  We have $c\in \partial T_{uk}$, because $c\in T_{uk}$ and  every neighborhood of $c$ contains points in the complement of $T_{uk}$ as follows from Lemma~\ref{lem:vartree}~(ii) (here it is important that there are at least two branches of $c$). If $p\in T_{uk}\sub T_u$ and 
$p\not\in \{c\}\cup \partial T_u$, then a sufficiently small neighborhood  $N$ of $p$ belongs to $ T_u$. Since $T_{uk}\ba\{c\}$ is relatively open in $T_u$ (this follows from Lemma~\ref{lem:vartree}~(i)), we can shrink this neighborhood so that $p\in N\sub T_{uk}$. So no point $p$  in $T_{uk}$ can be a boundary point of $\partial T_{uk}$ unless it belongs  to $\{c\}\cup \partial T_u$. It follows that  $\partial T_{uk}\sub \{c\}\cup 
(\partial T_u\cap T_{uk})$.

On the other hand,  we know that $c\in \partial T_{uk}$. If $p\in 
\partial T_u \cap T_{uk}$, then $p$ is  a boundary point of $T_{uk}$, because every neighborhood of  $p$ contains elements in the complement of $T_u$ and hence in the complement of $T_{uk}\sub T_u$. This gives the other inclusion in \eqref{eq:bound}, and 
\eqref{eq:bound} follows.

The identity \eqref{eq:bound}  implies that each point in  $\partial T_{uk}$ is a branch point of $T$, because $c$ is and  the points in $\partial T_u$ are also  branch points of $T$ by construction on the previous level $n$. Moreover, each point $p\in \partial T_{uk}\sub T_{uk}$  is a leaf of $T_{uk}$, because if $p=c$, then $p$  is a leaf in $T_{uk}$ by Lemma~\ref{lem:branch}. Otherwise, $p\in \partial T_u$. Then $p$ is a leaf of $T_u$ by construction and hence a leaf of $T_{uk}$ by the discussion after  Lemma~\ref{lem:subbranches}.

For the  choice of the branch point $c\in T_u\ba\partial T_u$, the precise labeling of the children $T_{uk}$, and the choice of the signs of the leaves of $T_{uk}$ in $\partial T_{uk}$, we now consider two cases for the set $\partial T_u$. See Figure \ref{decof} for an illustration. 
 
 \smallskip
 {\em Case 1:} $\partial T_u$ contains precisely one element, say $\partial T_u=\{a\}$.  Note that 
 $T_u$ is a subtree of $T$ and so an infinite set. So $T_u\ba\partial T_u\ne \emptyset$. All points in $T_u\ba\partial T_u$  
 are interior points of  $T_u$. Since branch points 
 in $T$ are dense (here we use that $T$ belongs to $\T_m$), there exist branch points of $T$ in 
 $T_u\ba\partial T_u$.  We choose a  branch point $c\in T_u\ba\partial T_u $ with maximal height $H_T(c)$ among all such  branch points.  This is possible by Lemma~\ref{finite}.

Since $a\in \partial T_u\sub T_u\ba\{c\}$, precisely one of the children of $T_u$ contains $a$. 
We now consider two subcases  depending on the sign of the marked leaf $a$.     

If $\sgn(a,T_u)=-$,  then we  choose a labeling of the children  so that $a\in T_{u1}$. 
It then follows from \eqref{eq:bound} that  $\partial T_{u1}=\{a,c\}$ and $\partial T_{uk}=\{c\}$
for $k\ne 1$. We choose signs so that the set of signed marked leaves is $\{a^-, c^+\}$ in $T_{u1}$ and $\{c^-\}$ in $T_{uk}$, $k\ne 1$.

If $\sgn(a,T_u)=+$,   then we choose a labeling such that 
 $a\in T_{u2}$. Then again by \eqref{eq:bound} we have $\partial T_{u2}=\{a,c\}$ and 
 $\partial T_{uk}=\{c\}$ for $k\ne 2$.  
  We choose signs so that the set of signed marked leaves is 
$\{c^+\}$ in $T_{u1}$, $\{c^-, a^+\}$ in $T_{u2}$, and 
 $\{c^-\}$ in $T_{uk}$, $k\ne 1,2$.  

\begin{figure}

\begin{center}
\includegraphics [scale=0.3, 
]{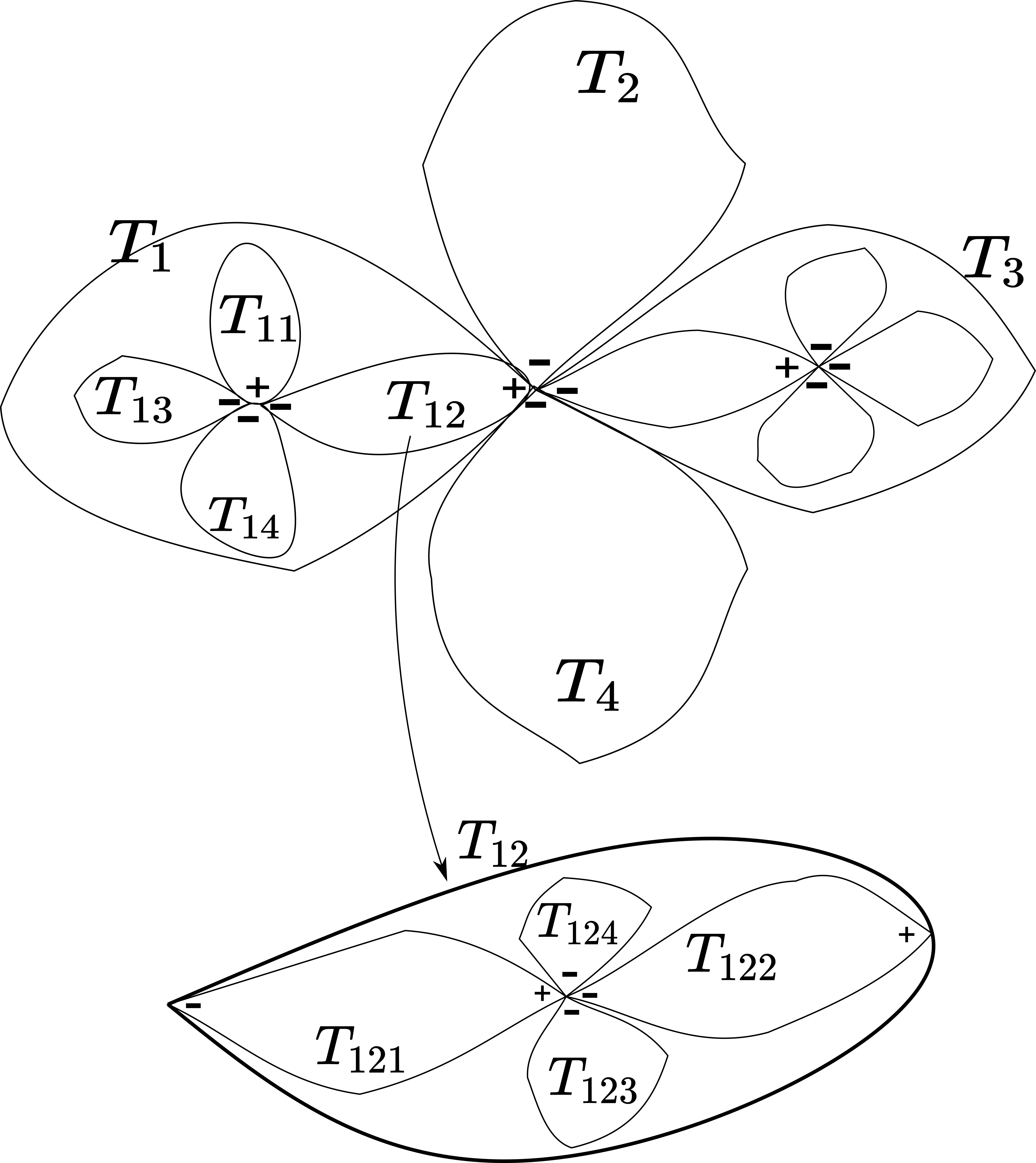}
\caption{An illustration for the decomposition of  subtrees with one marked leaf (top) or  two marked leaves (bottom).}
\label{decof}
\end{center}

\end{figure}

\smallskip
  {\em Case 2.} $\partial T_u$ contains precisely two  elements, say $\partial T_u =\{a^-, b^+\}$.  Then  we choose a branch point  $c\in (a, b)$ of $T$ 
 such that it has the maximal  height $H_T(c)$ among all branch  points  that lie on  $(a, b)$. The existence of $c$ is guaranteed by Lemma~\ref{triple_dense} and Lemma~\ref{finite}.
 Note that $(a,b)\sub T_u$, because $T_u$ is a subtree of $T$.
  
The points  $a$ and $b$ lie in different branches of $c$ in $T_u$ as  follows from Lemma~\ref{lem:vartree}~(iii). We choose the labels for the children of $T_u$  so that  $a\in T_{u1}$ and 
 $b\in T_{u2}$. Then by \eqref{eq:bound} we have 
 $\partial T_{u1}=\{a,c\}$, $\partial T_{u2}=\{c,b\}$, and 
  $\partial T_{uk}=\{c\}$, $k\ne 1,2$. We choose  signs  so that   
  the set of marked leaves is 
 $\{a^-, c^+\}$ in $T_{u1}$, $\{c^-, b^+\}$ in $T_{u2}$, and $\{c^-\}$, in  $T_{uk}$ for $k\ne 1,2$.

 \smallskip
 The most important points of our contruction can be summarized as follows.
 
 \smallskip
   {\em Summary:} $T_{uk}$ is a  subtree of $T$ such that $\partial T_{uk}$ consists of  one or two points. These  points are branch points of $T$ and leaves of $T_{uk}$. Moreover, the signs of the points in each set $\partial T_{uk}$ are chosen so that these signs differ if $\partial T_{uk}$
 contains two points. If $c$ is the branch point used to decompose 
 $T_u$, then $c$ is a marked leaf in all the children of $T_u$, namely the marked leaf $c^+$ in $T_{u1}$ and $c^-$ in $T_{uk}$ for $k\ne 1$. 
 
If $T_u$ has a marked leaf $p^-$, then $p$ is passed to the child 
 $T_{u1}$ with the same sign. Similarly, a marked leaf $p^+$ of $T_u$
 is passed to $T_{u2}$ with the same sign. So marked leaves are  passed to a unique child and they retain their signs.
 \smallskip

  Since Cases 1 and 2 exhaust all possibilities, this  completes the inductive step in the construction of the trees on level $n+1$ and their marked leaves.  So we obtain subtrees $T_u$ 
 of $T$ for all $u\in W_*$. Here it is convenient to set $T_\emptyset=T$ with an empty set of marked leaves.  
 
 
If one  applies our procedure  to choose 
 signs for the points in $\partial \Te_u$ for the subtrees $\Te_u$ 
 of the CSST defined in Section~\ref{s:CSST}, then one can recover these signs directly  by a simple rule without going through the recursive process. Namely, by Lemma~\ref{p:decomp} we have  
 $\partial \Te_u\sub \{f_u(-1), f(1)\}$. Then it is not hard to see that 
 for  
 $p\in \partial \Te_u$, we have  $\sgn(\Te_u, p)=+$ if $p=f_u(1)$ and 
 $\sgn(\Te_u, p)=-$ if $p=f_u(-1)$.
 
 We now summarize some facts about the subtrees $T_u$ of $T$ that we just defined.

\begin{lemma}\label{lem:condtrue}  The following statements are true: 

\begin{itemize}
 
\smallskip 
\item [\textnormal {(i)}] $\displaystyle T=\bigcup_{u\in W_n} T_u$ 
for each $n\in \N$.   

\smallskip
\item[\textnormal {(ii)}]  \ If   $n\in \N$, $u,v\in W_n$, $u\ne v$, 
and  $T_u\cap T_v\ne \emptyset $, then $T_u\cap T_v$ consists of precisely one point $p\in T$, which is a marked leaf  in both $T_u$ and $T_v$.  

\smallskip 
\item [\textnormal {(iii)}] \ \, For  $n\in \N$, $u\in W_n$, and $v\in W_{n+1}$, we have $T_v\sub T_u$ if and only if $u=vk$ for some 
$k\in\mathcal {A}$. 

\smallskip 
\item [\textnormal {(iv)}] \  \, For each $u\in W_*$ let $c_u$ be the branch point chosen in the decomposition of $T_u$ into children. Then $c_u\ne c_v$ for all $u,v\in W_*$ with  $u\ne v$.  
\end{itemize}
\end{lemma}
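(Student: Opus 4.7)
The plan is to handle the four claims in order (i), (ii), (iii), (iv), since (ii) is the engine driving both (iii) and (iv), while (i) is essentially a tautology of the construction. For (i) I would induct on $n$: for $n=1$ the union of the branches of the initial branch point $c_\emptyset$ covers $T$, and the inductive step follows by splitting every $T_u$ via \eqref{eq:piecedecomp}, yielding $\bigcup_{u \in W_n}\bigcup_{k \in \mathcal{A}} T_{uk} = \bigcup_{w \in W_{n+1}} T_w = T$.

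For (ii), the heart of the argument, I would again induct on $n$. The base case $n=1$ is immediate: two distinct children of $T_\emptyset$ are distinct branches of $c_\emptyset$ and therefore meet exactly in $\{c_\emptyset\}$, which is a signed marked leaf of each child by the initial labeling. For the inductive step, take distinct $u, v \in W_{n+1}$ with $T_u \cap T_v \neq \emptyset$, write $u = u'k$ and $v = v'\ell$ with $u', v' \in W_n$, and split into two cases. If $u' = v'$ then $T_u$ and $T_v$ are distinct children of $T_{u'}$ and share only the decomposition point $c_{u'}$, which is a marked leaf of both by the Summary. If $u' \neq v'$, then $T_u \cap T_v \subset T_{u'} \cap T_{v'}$, and the inductive hypothesis forces the latter to be a singleton $\{p\}$ with $p$ a marked leaf of $T_{u'}$ and of $T_{v'}$. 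The boundary identity \eqref{eq:bound} then gives $p \in \partial T_{u'} \cap T_u \subset \partial T_u$, and the Summary certifies $p$ as a signed marked leaf of $T_u$; the symmetric argument works for $T_v$.

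Statement (iii) (read with the evident correction $v = uk$, since $|u|=n$ and $|v|=n+1$) then follows quickly: the ``if'' direction is immediate from \eqref{eq:piecedecomp}, while the converse uses (ii), since writing $v = v'\ell$ with $v' \in W_n$ and $v' \neq u$ would force $T_v \subset T_u \cap T_{v'}$, which is impossible for the infinite subtree $T_v$. For (iv), suppose $c_u = c_v =: c$ with $u \neq v$ and $|u| \leq |v|$; recall $c_w \in T_w \setminus \partial T_w$ by construction. If $|u| = |v|$, then $c \in T_u \cap T_v$ forces $c$ to be a marked leaf of $T_u$ by (ii), contradicting $c \notin \partial T_u$. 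If $|u| < |v|$, write $v = v' w$ with $|v'| = |u|$; the same argument at level $|u|$ rules out $v' \neq u$, while for $v' = u$ the Summary shows that $c = c_u$ becomes a marked leaf of the unique child of $T_u$ containing it and is then passed down as a marked leaf at every subsequent level, so $c \in \partial T_v$, again contradicting $c = c_v \notin \partial T_v$.

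The main obstacle is the $u' \neq v'$ case of the inductive step in (ii): the inherited intersection point must be recognized not merely as a point but as a signed marked leaf of each child, which hinges on the boundary formula \eqref{eq:bound} together with the Summary's rule that marked leaves are transmitted to exactly one child and retain their sign. Once this book-keeping is settled, (iii) and (iv) fall out from the clean dichotomy ``marked leaf versus interior point'' afforded by (ii).
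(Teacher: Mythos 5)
Your proof is correct and follows essentially the same strategy as the paper's: induction on $n$ for (i) and (ii), with (ii) serving as the engine for (iii) and (iv). You also correctly spot the typo in (iii), where the paper writes $u=vk$ but must mean $v=uk$.

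The one place where you diverge meaningfully from the paper is (iv). The paper shows that for each level $n$ the branch points $c_u$, $u\in W_n$, are mutually distinct (via (ii)) and then argues they lie outside $\bigcup_{u\in W_n}\partial T_u$, which the paper identifies with $\{c_v: |v|\le n-1\}$; this last identification is a separate (unproved) claim that would itself need a small induction. Your argument sidesteps that claim by a direct contradiction: if $c_u=c_v$ with $u\ne v$ and $|u|\le |v|$, either (ii) at level $|u|$ kills it, or (for $v$ a descendant of $u$) the boundary formula \eqref{eq:bound} propagates $c$ into $\partial T_v$, contradicting $c_v\notin\partial T_v$. This is arguably cleaner. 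One wording slip: you say $c_u$ ``becomes a marked leaf of the unique child of $T_u$ containing it,'' but $c_u$ lies in \emph{all} children of $T_u$ (it is the decomposition point); what you actually need, and what the logic correctly delivers, is that $c_u$ lies in $\partial T_{uk}$ for every child $T_{uk}$, and then \eqref{eq:bound} keeps $c_u$ in the boundary of every deeper descendant that contains it, in particular $T_v$.
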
 

\begin{proof} (i) This immediately follows from \eqref{eq:piecedecomp} and induction  on $n$. 

\smallskip
 (ii) We prove this by induction on $n$. By choice of the subtrees 
$T_k$ for  $k\in \mathcal{A}=W_1$  and their marked leaves this is clear for $n=1$.

Suppose the statement is  true for all distinct words  of 
length $n-1$, where $n\ge 2$.  Now consider two words $u,v\in W_n$ of length $n$  with $u\ne v$ and $T_u\cap T_v\ne \emptyset$.   Then $u=u'k$ and $v=v'\ell$,  where  $u',v'\in W_{n-1}$ and $k,\ell\in \mathcal{A}$. 

If  $u'=v'$, then $T_u$ and $T_v$ are two of the branches 
 obtained from $T_{u'}$ and a suitable branch point  $c\in T_{u'}$. In this case, $\{c\}= T_u\cap T_v$ and $c$ is a marked leaf in both $T_u$ and $T_v$. 
 
In the other case,  $u'\ne v'$. Then 
$T_{u'}\cap  T_{v'}\ne \emptyset$, because 
$T_{u}\cap  T_{v}\ne \emptyset$,  $T_u\sub T_{u'}$, and $T_v\sub T_{v'}$. By induction hypothesis, 
$T_{u'}\cap  T_{v'}$ consists of  precisely one point $p$, which  is  a marked leaf in both $T_{u'}$ and $T_{v'}$. 
Then necessarily $T_u\cap T_v=\{p\}$.
Moreover, $p$ is a marked leaf in both $T_u$ and 
$T_v$, because marked leaves are passed to children. 
The  statement follows. 

\smallskip
(iii)  Let $n\in \N$ and $u\in W_n$. Then we have $T_{uk}\sub T_u$ for each $k\in \mathcal{A}$ by our construction.  Conversely, suppose $T_v\sub T_u$, where $v=v'k\in W_{n+1}$ with  $v'\in W_{n}$ and $k\in \mathcal{A}$. Then $T_{v'}\cap T_u\supseteq T_v$ contains more than one point. By (iii)
this implies that $v'=u$. The statement follows. 

\smallskip
(iv) If $u\in W_n$, $n\in \N_0$, then by construction $c_u\in T_u$  does not lie in the set $\partial T_u$ of marked leaves of $T_u$. By (ii) this implies that $c_u\not\in T_{w}$ for each $w\in W_n$, $w\ne u$. It follows that the points $c_u$, $u\in W_n$, are all distinct, and 
none of them is contained  in the union of sets $\partial T_u$, $u\in W_n$. By our construction this union is equal to the set of all  points $c_v$, where   $v\in W_*$ is a word of length $\le n-1$. This shows that the branch points 
 $c_u$, $u\in W_n$, used to define the subtrees of level $n+1$ are all distinct and distinct from any of the previously chosen branch points for  levels $\le n$. The statement follows from this.  \qed
\end{proof}

\begin{lemma} \label{lem:diamto0} 
We have 
$ \displaystyle \lim_{n\to \infty} \sup\{ \diam(T_u): u\in W_n\} =0. $ 
\end{lemma}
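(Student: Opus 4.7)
My plan is to argue by contradiction, using König's lemma to extract a nested chain of tiles with bounded-below diameters, and then to exploit the fact that the heights of the chosen branch points must tend to zero along the chain. The main idea is that the decreasing heights $H_T(c_{v_n}) \to 0$ ought to bound heights of suitable ``candidate'' branch points lying in the intersection $K=\bigcap_n T_{v_n}$; finding such a candidate with positive height will give the contradiction.

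More precisely: suppose $d_n := \sup_{u\in W_n}\diam(T_u)$ does not tend to $0$. Since $T_{uk}\subseteq T_u$, the sequence $d_n$ is non-increasing, so $d_n\ge \delta$ for some $\delta>0$. The sets $S_n = \{u\in W_n : \diam(T_u)\ge\delta\}$ are finite, non-empty, and each element of $S_n$ has parent in $S_{n-1}$; by K\"onig's lemma applied to this finitely-branching rooted tree I obtain an infinite descending chain $v_1,v_2,\ldots$ with $\diam(T_{v_n})\ge\delta$ for all $n$. Let $K=\bigcap_n T_{v_n}$; as a nested intersection of compact connected sets $K$ is compact and connected, and extracting convergent subsequences of diameter-realizing pairs in $T_{v_n}$ yields $\diam K\ge\delta$, so by Lemma~\ref{lem:subtree} $K$ is a non-degenerate subtree of $T$. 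Each $c_{v_n}$ lies in every $T_{v_m}$ (it is a common boundary point of the children of $T_{v_n}$), so $c_{v_n}\in K$; by Lemma~\ref{lem:condtrue}(iv) these are pairwise distinct, and Lemma~\ref{finite} forces $H_T(c_{v_n})\to 0$. Note that every marked leaf appearing in the chain lies in the countable set $\{c_{v_m}\}$, because leaf-tile children inherit the freshly chosen branch point as their unique marked leaf and arc-tile children inherit only previously chosen ones.

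The easy cases are as follows. If infinitely many $T_{v_n}$ are leaf-tiles, then at those levels the marked leaf equals $c_{v_{n-1}}$ and the values are pairwise distinct; picking a branch point $r\in K$ inside a non-degenerate arc of $K$ (existence via Lemma~\ref{triple_dense}), $r$ coincides with at most one leaf-tile marked leaf, hence is a candidate for all but finitely many leaf-tile levels, giving $H_T(r)\le H_T(c_{v_n})\to 0$ and the contradiction. Otherwise $T_{v_n}$ is an arc-tile for every $n\ge N$; then the nested spines $[a_n,b_n]$ have intersection $L\subseteq K$ which is either a non-degenerate sub-arc $[a',b']$ or a single point $\{p^*\}$. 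In the sub-arc case, Lemma~\ref{triple_dense} furnishes a branch point $r\in(a',b')$, and since $r$ can equal the countably many points $a_n,b_n\in\{c_{v_m}\}$ only finitely often, $r\in(a_n,b_n)$ for all sufficiently large $n$ and the same candidate argument applies.

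The hard case, and the main obstacle, is $L=\{p^*\}$: here $\diam[a_n,b_n]\to 0$ while $\diam T_{v_n}\ge\delta$, forcing $K$ to carry substantial structure in side branches attached along the collapsing spine. If $p^*$ happens to be strictly interior to $(a_n,b_n)$ for some $n$ with $H_T(c_{v_n})<H_T(p^*)$ (which must happen unless $p^*=a_n$ or $b_n$ for all large $n$), then $p^*$ itself is a candidate and one obtains the contradiction directly (and if $p^*$ were not a branch point of $T$, one argues separately that $K$ collapses to $\{p^*\}$, contradicting $\diam K\ge\delta$). In the remaining situation $p^*$ equals $a_n$ or $b_n$ for all large $n$, which confines $K$ to a single branch $B$ of $p^*$ together with $\{p^*\}$; choosing $z\in K\setminus\{p^*\}$ and a branch point $r$ in the interior of $[p^*,z]\subseteq K\cap B$ via Lemma~\ref{triple_dense}, the attachment of $r$ to the shrinking spine is realised at a branch point $y_n\in(a_n,b_n)$ of $T$, which is itself a candidate with $H_T(y_n)\le H_T(c_{v_n})\to 0$. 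The side branch of $y_n$ containing $r$ has diameter bounded below by $d(p^*,r)/2$ for large $n$, and provided $p^*$ has a non-trivial ``$p^*$-side'' beyond $y_n$ one gets a second branch of $y_n$ of uniformly positive diameter, forcing the third-largest branch diameter $H_T(y_n)$ to be bounded away from zero --- the desired contradiction. The delicate final step, pinning down this geometric comparison at $p^*$ (including the degenerate variants where $p^*$ is a leaf of $T$ so that the ``$p^*$-side'' degenerates), is the main technical hurdle of the proof.
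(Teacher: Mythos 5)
Your setup (K\"onig's lemma, the limit set $K$ with $\diam K\ge\delta$, pairwise distinct $c_{v_n}$ forcing $H_T(c_{v_n})\to 0$ via Lemma~\ref{finite}, and the finiteness of leaf-tile levels) matches the paper. A minor slip first: $c_{v_n}$ is passed to only one child of $T_{v_{n+1}}$, so $c_{v_n}\notin T_{v_{n+2}}$ in general and your claim $c_{v_n}\in K$ is false; this is not load-bearing, since distinctness of the $c_{v_n}$ alone already gives $H_T(c_{v_n})\to 0$. The genuine gap is in your last subcase, where $L=\{p^*\}$ and $p^*\in\{a_n,b_n\}$ for all large $n$. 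There you exhibit two branches of $y_n\in(a_n,b_n)$ with diameter bounded below (the one containing $r$, and the one containing $p^*$ together with $T\setminus B$) and conclude that $H_T(y_n)$ is bounded away from $0$. That inference is wrong: $H_T(y_n)$ is the diameter of the \emph{third}-largest branch of $y_n$, and two large branches impose no lower bound on it. In particular the branch of $y_n$ containing $b_n$ may have diameter comparable to $d(y_n,b_n)\to 0$, and any further branches of $y_n$ may be arbitrarily small, so $H_T(y_n)$ can tend to $0$. You would need a third branch of uniformly positive diameter, and your construction supplies no such branch.

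The paper avoids the whole $L$-analysis by choosing the candidate differently. Fix $p\ne q$ in $K$, pick a branch point $x\in(p,q)$ via Lemma~\ref{triple_dense}, and note that by Lemma~\ref{lem:vartree}~(iii) $x$ is not a leaf of $K_n$ and hence not a marked leaf of any $K_n$. After the finitely many leaf-tile levels (the same counting you do), let $x'$ be the first point of the spine $[a_N,b_N]$ met by the arc $[x,a_N]$. Then $x'$ is a branch point in $(a_N,b_N)$, and — this is the key step — $x'\ne c_{v_n}$ and $x'\in(a_n,b_n)$ for \emph{every} $n\ge N$: if $x'$ were equal to $c_{v_n}$, the branch of $c_{v_n}$ containing $x$, which is $K_{n+1}$, would be a leaf-tile, contradicting that all levels beyond $N$ are arc-tiles. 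Since $c_{v_n}$ has maximal height on $(a_n,b_n)\ni x'$, one gets $H_T(c_{v_n})\ge H_T(x')>0$ for infinitely many distinct branch points, contradicting Lemma~\ref{finite}. This single fixed candidate $x'$ shows in particular that the spine intersection always contains an interior branch point, so the troublesome subcase you tried to handle directly never actually arises — but one only sees that after making the projection argument rather than splitting on $L$.
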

\begin{proof} Let $\delta_n\coloneqq  \sup \{ \diam(T_u): u\in W_n\}$
for $n\in \N$. It is clear that the sequence 
$\{ \delta_n\}$ is non-increasing. To show that $\delta_n\to 0$ as $n\to \infty$, we argue by contradiction. Then there exists 
$\delta>0$ such that $\delta_n\ge \delta$ for all $n\in \N$. 
This means that for each $n\in \N$ there exists $u\in W_n$ with 
\begin{equation}\label{eq:del}
\diam(T_u)\ge \delta.
\end{equation}
We now use \eqref{eq:del} to find an infinite word $w=w_1w_2\ldots\in W$ such that 
\begin{equation}\label{eq:del2}
 \diam(T_{w_1\dots w_n})\ge \delta
 \end{equation}
  for all $n\in \N$. 
The word $w$ is constructed inductively as follows. 
 One of the finitely many letters $k\in \mathcal{A}$  must have the property that there are arbitrarily long words $u$ starting with $k$ such that \eqref{eq:del} is true.  

We define $w_1=k$. Note that then $\diam(T_{w_1})\ge \delta$. 
T  By choice of $w_1$,   one of the letters  $\ell\in  \mathcal{A}$  must have the property that 
there are arbitrarily long words $u$ starting with $w_1\ell$  
such that \eqref{eq:del} is true. We define $w_2=\ell $.
Then $\diam(T_{w_1w_2})\ge \delta$.  Continuing in this manner, we can find $w=w_1w_2\ldots\in W$ satisfying  \eqref{eq:del2}. 

Obviously, 
$$ T_{w_1}\supseteq T_{w_1w_2} \supseteq T_{w_1w_2w_3} 
 \supseteq\dots\, .$$ 
So the subtrees $K_n=T_{w_1\dots w_n}$, $n\in \N$, of $T$ form a descending family of compact sets with $\diam(K_n)\ge \delta$. 
This implies that 
$$ K=\bigcap_{n\in \N} K_n$$ is a non-empty compact subset of 
$T$ with $\diam(K)\ge \delta.$

In particular, we can choose $p,q\in K$ with $p\ne q$. Then 
$p,q\in K_n$ for each $n\in \N$. Since $K_n$ is a subtree of $T$, we then have $[p,q]\sub K_n$. Moreover, by Lemma~\ref{triple_dense} there exists a branch  point $x$ of $T$ contained  in $(p,q)\sub K_n$. 
By Lemma~\ref{lem:vartree}~(iii) the points $p$ and $q$ lie in different components of  $K_n\ba\{x\}$.  In particular, for each $n\in \N$ the point $x$ is not a leaf of $K_n$ and hence distinct from the marked leaves of $K_n$.

 
 By Lemma~\ref{finite} there are only  finitely many  branch  points $y_1, \dots, y_s$ of $T$ distinct   from $x$ with $H_T(y_j)\ge H_T(x)>0$ for $j=1, \dots, s$. This implies that at most $s$ of the trees $K_n$ are leaf-tiles, i.e., have only one marked leaf. Indeed, 
 if $K_{n}$  an leaf-tile, then it is decomposed into  branches  by use of a branch point $c\in K_n\ba \partial K_n$ with the largest height $H_T(c)$. The point $c$ is then a marked leaf in each of the 
 children of $K_n$ and in particular in $K_{n+1}$.  
 Since the branch  point $x\in K_{n}$ is distinct from the marked leaves of $ K_{n}$ and $K_{n+1}$, we have $x\in K_n\ba\partial K_n$
 and $x\ne c$.  So  $x$ was not chosen to decompose $K_{n}$, and we must have 
 $H_T(c)\ge H_T(x)$. Since the branch  points $c$ that appear from leaf-tiles at different levels $n$ are all distinct as follows from Lemma~\ref{lem:condtrue}~(iv), we can have at most $s$ 
 leaf-tiles in the sequence  $K_n$, $n\in \N$. 
This implies  that there exists $N\in \N$ such that $K_n$ for $n\ge N$ is an arc-tile and so has precisely two marked leaves.  
 
 Let $a,b\in K_N$ with $a\ne b$ be the marked leaves of $K_N$.
 As we travel from $x$ along $[x,a]\sub K_N$ towards $a$, there is a first point $x'$ on $[a,b]$. Then $x'\ne a$.  Otherwise,  
$x'=a$. Then 
 $[x,a]$ and $[a,b]$ have only the point $a$ in common, which implies that $[x,a]\cup [a,b]$  is an arc equal to $[x,b]$. Then $a\in (x,b)$, which by Lemma~\ref{lem:vartree}~(iii) implies that $x,b\in K_N$ lie in different components of $K_N\ba\{a\}$. This contradicts  
the fact that $a$ is a leaf of $K_N$ and so $K_N\ba\{a\}$ has only one component. Similarly, one can show that 
$x'\ne b$.  

The point $x'$ is a branch  point of $T$. This is clear if $x'=x$.
If  $x'\ne x$, this follows from Lemma~\ref{lem:tripcrit}, because $a,b,x\ne x'$ and the arcs 
$[a,x')$, $[b,x')$, $[x,x')$ are pairwise disjoint. 

The tree $K_{N+1}$ is a branch of $K_N$ obtained from a branch point $c\in (a,b)$ of $T$ with largest height $H_T(c)$ among all branch  points  on $(a,b)$. We have $x'\ne c$. Otherwise, $x'=c$. Then $x\ne x'$, because $x'=c$ is a marked leaf of $K_{N+1}$ and 
$x$ is distinct from all the marked leaves 
in any of the sets $K_n$. This implies that  
the points $a,b,x$ lie in different components  of $K_N\ba\{x'\}$
and hence in different branches of $x'$ in $K_N$.  Since  $a$ and $b$ are the marked leaves of $K_N$, the  branches containing $a$ and $b$ are arc-tiles and all other branches of $x'=c$ in $K_N$ are leaf-tiles. The unique branch of $x'$ in $K_N$   containing $x$, which is equal to $K_{N+1}$, must be a leaf-tile by the way we  decomposed  $T$.  This is impossible by  choice of $N$  and so indeed $x'\ne c$. 
Note that this implies  $H_T(c)\ge H_T(x')$. 

Since $x'\ne c$, $c\in (a,b)$, and $[x,x')\cap [a,b]=\emptyset$, we have 
$[x,x']\sub K_N\ba\{c\}$. So $x'$ lies in the same branch of $c$ in $K_N$ as $x$, which is $K_{N+1}$. Moreover, depending on whether $c\in (a,b)$ lies on the right  or  left of $x'\in (a,b)$, we have $x'\in (a,c)$ or 
$x'\in (c,b)$. In the first case, $[a,c]\sub K_{N+1}$ and $a$ and $c$ are the marked leaves of $K_{N+1}$. In the second case,  
$[c,b]\sub K_{N+1}$ and $c$ and $b$ are the  marked leaves of $K_{N+1}$. So in both cases, if  $a'$ and $b'$ are the marked leaves of $K_{N+1}$, then $x'\in (a',b')$, $[x,x']\sub K_{N+1}$, and 
$[x,x')\cap [a',b']=\emptyset$. 

These facts allow us 
to repeat the argument for $K_{N+1}$ instead of $K_N$. 
Again $K_{N+1}$ is decomposed into branches by choice of a branch point $c'\in (a',b')$. We must have $c'\ne x'$, because otherwise we again obtain a contradiction to the fact that $K_{N+2}$ is not a leaf-tile. This implies that $H_T(c')\ge H_T(x')$. Continuing in this manner, we obtain an infinite sequence of  branch points $c,c', \dots$. By construction these branch points are all distinct and have a height $\ge H_T(x')$. This is impossible by Lemma~\ref{finite}. We obtain a contradiction that establishes the statement.    \qed
\end{proof}

The previous argument shows that each branch  point $x$  of $T$ will eventually be chosen as a branch  point in   the decomposition  of $T$ into the subtrees $T_u$, $u\in W_*$. Indeed, otherwise $x$ is distinct from all the marked leaves of any of the subtrees $T_u$, $u\in W_*$. 
This in turn implies that there exists a unique infinite word 
$w=w_1w_2\ldots \in W$ such that $x\in K_n\coloneqq T_{w_1\dots w_n}$ for $n\in \N$. From this one obtains a contradiction as in the last part of the proof of Lemma~\ref{lem:diamto0}.

\begin{lemma}\label{lem:condtrue2}  Let $m\in \N$, $m\ge 3$, and  suppose $T$ and $S$ are trees in $\mc{T}_m$. Assume that  subtrees $T_u$ of $T$ and $S_u$ of $S$ with signed marked leaves have been defined for $u\in W_*$ by the procedure described above. Then the following statements are true:

\begin{itemize}
 
\smallskip 
\item [\textnormal {(i)}] Let $n\in \N$, $u\in W_n$, and $v\in W_{n+1}$. Then $T_v\sub T_u$ if and only if $S_v\sub S_u$. 

\smallskip
\item[\textnormal {(ii)}]  \ For  $n\in \N$ and  $u,v\in W_n$ with $u\ne v$ we have  $T_u\cap T_v\ne \emptyset $
if and only if 
$S_u\cap  S_v\ne \emptyset$. Moreover, if these intersections are non-empty, then they are singleton sets, say 
$\{p\}=T_u\cap T_v$ and $ \{\widetilde p\}=S_u\cap  S_v $.
The point  $p$ is a signed marked leaf in $T_u$ and $T_v$, the point 
$\widetilde p$ is a signed marked leaf in $S_u$ and $S_v$, 
$\sgn(p, T_u)=\sgn(\widetilde p, S_u)$,  and $\sgn(p, T_v)=\sgn(\widetilde p, S_v)$. 
\end{itemize}
\end{lemma}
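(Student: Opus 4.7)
The plan is to handle the two parts separately, with (i) being essentially a combinatorial restatement and (ii) requiring induction on the level $n$. For (i) I would simply note that Lemma~\ref{lem:condtrue}~(iii), applied once to $T$ and once to $S$, characterizes both inclusions $T_v\sub T_u$ and $S_v\sub S_u$ by the same purely word-theoretic condition that $v=uk$ for some $k\in\mathcal{A}$. Hence (i) is immediate and no tree-specific input is needed.

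For (ii) I would proceed by induction on $n$. The base case $n=1$ is straightforward: by construction the trees $T_1,\ldots,T_m$ pairwise share exactly the top branch point $c_\emptyset$, and the sign rule declares this marked leaf to be $+$ in $T_1$ and $-$ in $T_k$ for $k\ne 1$; the symmetric facts hold for $S_1,\ldots,S_m$, so the matching is immediate.

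For the inductive step, fix $u,v\in W_{n+1}$ with $u\ne v$ and write $u=u'k$, $v=v'\ell$ with $u',v'\in W_n$ and $k,\ell\in\mathcal{A}$. If $u'=v'$ then $T_u$ and $T_v$ are distinct children of $T_{u'}$, so they meet only in the branch point $c_{u'}$ used to split $T_{u'}$; this point appears as the marked leaf $c_{u'}^+$ in the child labeled $1$ and $c_{u'}^-$ in every other child (exactly the same rule applied on the $S$ side), and since the signs depend only on $k$ and $\ell$ they automatically match between $T$ and $S$. The substantive case is $u'\ne v'$, where $T_u\cap T_v\sub T_{u'}\cap T_{v'}$ and similarly for $S$. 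By the inductive hypothesis the sets $T_{u'}\cap T_{v'}$ and $S_{u'}\cap S_{v'}$ are either both empty (in which case there is nothing to prove) or they are singletons $\{p\}$ and $\{\widetilde p\}$ with matching signs in each of the two parent tiles.

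In this last subcase the key fact, which I expect to be the main obstacle to formalize cleanly, is the sign-preservation rule recorded in the Summary: a marked leaf $p^-$ of $T_{u'}$ is passed with its sign to the unique child $T_{u'1}$, while a marked leaf $p^+$ is passed to $T_{u'2}$. Since the inductive hypothesis gives $\sgn(p,T_{u'})=\sgn(\widetilde p,S_{u'})$, the label of the child of $T_{u'}$ containing $p$ equals the label of the child of $S_{u'}$ containing $\widetilde p$, and the analogous statement holds on the $v'$ side. Consequently $T_u\cap T_v$ is nonempty (and equal to $\{p\}$) exactly when $S_u\cap S_v$ is nonempty (and equal to $\{\widetilde p\}$); the fact that the common point is a marked leaf in each child, and that its signs in $T_u,T_v$ agree with those of $\widetilde p$ in $S_u,S_v$, follows from the same passing rule together with Lemma~\ref{lem:condtrue}~(ii). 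The principal bookkeeping point is to verify that every step of the induction invokes both the existence half and the sign-matching half of the hypothesis simultaneously, since on level $n+1$ one genuinely needs the signs on level $n$ to conclude even about existence.
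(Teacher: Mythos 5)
Your proof is correct and follows essentially the same route as the paper's. Part (i) is handled identically via Lemma~\ref{lem:condtrue}~(iii), and the induction in part (ii) with the same two subcases (siblings vs.\ different parents) and the same use of the sign-passing rule to match child labels across $T$ and $S$ is exactly what the paper does; the paper merely spells out the two sign subcases ($-\Rightarrow$ child $1$, $+\Rightarrow$ child $2$) explicitly where you phrase it once as ``the sign determines the child label,'' which is a purely presentational difference.
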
 

In (ii) we are actually only interested in the statement that 
$T_u\cap T_v\ne \emptyset $
if and only if 
$S_u\cap  S_v\ne \emptyset$. The additional claim in (ii) will help us to prove this statement by an  induction  argument.

\begin{proof} (i) This follows from Lemma~\ref{lem:condtrue}~(iii) applied to the decompositions of $T$ and $S$. Indeed, we have 
$T_v\sub T_u$ if and only if $v=uk$ for some $k\in \mathcal{A}$ if and only if $S_v\sub S_u$. 

\smallskip
(ii)  We prove this  by induction on $n\in \N$. The case 
  $n=1$ is clear by how the decompositions were chosen.

  Suppose the claim is  true for  words  of 
length $n-1$, where $n\ge 2$.  Now consider two words $u,v\in W_n$ of length $n$  with $u\ne v$.  Then $u=u'k$ and $v=v'\ell\in W_{n}$,  where  $u',v'\in W_{n-1}$ and $k,\ell\in\mathcal{A}$. Since the claim is symmetric in $T$ and $S$, we may assume that $T_u\cap T_v\ne \emptyset$. 

If  $u'=v'$, then $T_u$ and $T_v$ are two of the branches obtained from $T_{u'}$ and  a branch  point $c\in T_{u'}$. In this case, 
$T_u\cap T_v=\{c\}$ and $c$ is a marked leaf in both $T_u$ and $T_v$. 
 Similarly, $S_u$ and $S_v$ are two of the branches obtained from $S_{u'}$ and  a branch  point $\widetilde c\in S_{u'}$. We  have $S_u\cap S_v=\{\widetilde c\} $ and $\widetilde c$ is a marked leaf in both $S_u$ and $S_v$.
 Moreover, $c$  has the same sign in  $T_u$ as $\widetilde c$ in $S_u$. Indeed, by the  choice of labeling in the decomposition, this sign is $+$ if $k=1$ and $-$ otherwise. Similarly, 
 $c$ has the same sign in $T_v$ as $\widetilde c$ in $S_v$. 
 This shows that the statement is true in this case.

In the other case,  $u'\ne v'$. Then 
$T_{u'}\cap  T_{v'}\ne \emptyset$, because
$T_{u}\cap  T_{v}\ne \emptyset$,  $T_u\sub T_{u'}$, and $T_v\sub T_{v'}$. Then by induction hypothesis, 
$T_{u'}\cap  T_{v'}$ consists of precisely one point $p$ that is  a marked leaf in  both $T_{u'}$ and  $T_{v'}$. The set  $S_{u'}\cap S_{v'}$ consists of one point $\widetilde p$ that is a marked leaf in $S_{u'}$ and $S_{v'}$. Moreover, we  have 
$\sgn(p,T_{u'})=\sgn(\widetilde p,S_{u'})$ and $\sgn(p,T_{v'})=\sgn( \widetilde p,S_{v'})$. Since $\emptyset\ne T_u\cap T_v\sub T_{u'}\cap  T_{v'}=\{p\}$, we then have  $T_u\cap T_v=\{p\}$.

If $\sgn(p,T_{u'})=\sgn(\widetilde p,S_{u'})=-$, then 
$u=u'1$, because $p\in T_u$. Hence $\widetilde p\in S_{u'1}=S_u$, because the  marked leaf $\widetilde p$ of $S_{u'}$ with $\sgn(\widetilde p,S_{u'})=-$ is passed to the child $S_{u'1}$.   
If $\sgn(p,T_{u'})=\sgn(\widetilde p,S_{u'})=+$, 
 then $u=u'2$ and $\widetilde p\in S_{u'2}=S_u$.  
 
Similarly, if $\sgn(p,T_{v'})=\sgn(\widetilde p,S_{v'})=-$,
then $v=v'1$ and if $\sgn(p,T_{v'})=\sgn(p,S_{v'})=+$, then 
$v=v'2$, because $p\in T_v$.  In both cases, $\widetilde p\in S_v$. 

In each of these cases,  $p$ is a marked leaf in $T_u$ and $T_v$, and $\widetilde p$ is a marked leaf in  $S_u$ and  $S_v$.
 In particular, 
$\{\widetilde p\}\sub S_u\cap S_v\sub S_{u'}\cap S_{v'}=\{\widetilde p\}$ and so 
$S_u\cap S_v=\{\widetilde p\}$.  So both $T_u\cap T_v=\{p\}$ and 
  $S_u\cap S_v=\{\widetilde p\}$ are singleton sets consisting of marked leaves as claimed.
 Since signed marked leaves are passed to children with the same sign, we have 
 $$ \sgn(p,T_{u})= \sgn(p,T_{u'})=\sgn(\widetilde p ,S_{u'})=\sgn(\widetilde p,S_{u}). $$
 Similarly, we conclude that $\sgn(p,T_{v})= \sgn(\widetilde p,S_{v})$.  The statement  follows.  \qed
\end{proof} 

We are now ready to prove 
 Theorem~\ref{t:infinite}, and  Theorem~\ref{criterion} as an immediate consequence.   
 

\begin{proofof} {\em Proof of Theorem~\ref{t:infinite}.} Let $m$ be as in the statement, and consider arbitrary trees $T$ and $S$ in the class $\mc{T}_{m}$.  For each $n\in \N$ we consider 
the decompositions  $T=\bigcup_{u\in W_n} T_{u}$ 
and $S=\bigcup_{u\in W_n} S_{u}$ as defined earlier in this section. 
Here of course, $W_n=W_n(\mathcal{A})$, where $\mathcal{A}=\{1,2,\dots, m\}$. 

We want to show that  decompositions of $T$ and $S$ for different levels $n\in \N$  have the properties 
in Proposition~\ref{nested}. In this proposition the index $i$ for fixed level $n$ corresponds to the words $u\in W_n$. 

The spaces $T$ and $S$ are trees and hence compact. The sets $T_u$ and $S_u$ appearing in their decompositions are subtrees and hence non-empty and compact. Conditions (i), (ii), and (iii) in Proposition~\ref{nested} follow from  
 Lemma~\ref{lem:condtrue}~(iii), \eqref{eq:piecedecomp}, and Lemma~\ref{lem:diamto0}, respectively.  
Finally, \eqref{eq:iff1} and \eqref{eq:iff2} follow
from Lemma~\ref{lem:condtrue2}~(i) and~(ii).  

Proposition~\ref{nested} implies $T$ and $S$ are homeomorphic as desired.  \qed
\end{proofof}

\begin{proofof}{\em Proof of Theorem~\ref{criterion}.} 
 As we have seen in 
Section~\ref{s:CSST}, the CSST $\Te$ is a metric tree with the properties
(i) and (ii) as in the statement (see Proposition~\ref{prop:CSST}
and  Proposition~\ref{prop:T123}). In particular, $\Te$ belongs to the class $\T_3$. 
Since  these properties (i) and (ii) are obviously invariant
under homeomorphisms, every metric tree $T$ homeomorphic to $\Te$ has these properties. 

Conversely, suppose that $T$ is a metric tree with  properties (i) and (ii). Then $T$ belongs to the class $\T_{3}$. So  Theorem~\ref{t:infinite} for  $m=3$ implies that  $T$ and $\Te$ are homeomorphic.
 \qed   
\end{proofof} 

The method of proof for  Theorem~\ref{t:infinite} can be used to a establish a slightly stronger result for $m=3$. 
 
 \begin{theorem}\label{t:norm} Let $T$ and $S$ be trees in 
$\T_3$. Suppose $p_1,p_2,p_3\in T$ are three distinct leaves of  
 $T$, and $q_1,q_2,q_3\in S$  are three distinct leaves of  
 $S$. Then there exists a homeomorphism $f\: T\ra S$ such that 
 $f(p_k)=q_k$ for $k=1,2,3$. \end{theorem}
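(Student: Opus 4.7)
The plan is to adapt the decomposition scheme of Section~\ref{s:proof} so that the three distinguished leaves are tracked consistently, and then to invoke Proposition~\ref{nested}. The key observation is that for three distinct leaves $p_1,p_2,p_3$ of a tree $T\in \T_3$, their \emph{median}---the unique point $c$ lying on each arc $[p_i,p_j]$ with $i\ne j$---is forced to be a triple point: the three arcs $[c,p_1], [c,p_2], [c,p_3]$ meet pairwise only at $c$, so $\nu_T(c)\ge 3$, and since $T\in \T_3$ we must have $\nu_T(c)=3$. I would begin the decomposition at level~$1$ by choosing this median $c$ in place of the maximal-height branch point, labeling the three branches of $c$ so that $p_k\in T_k$, and assigning signs $c^+$ in $T_1$ and $c^-$ in $T_2,T_3$ as in Section~\ref{s:proof}. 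The analogous construction with the median $\tilde c$ of $q_1,q_2,q_3$ yields the matching decomposition of $S$.

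For the inductive step, by construction each subtree $T_u$ at level $n$ contains at most one of the $p_k$'s, and I maintain the invariant that the corresponding $S_u$ contains the matching $q_k$ (or neither contains a distinguished leaf). If $T_u$ contains no distinguished leaf, I apply the original maximal-height rule to $T_u$ and $S_u$ independently. If $T_u$ contains $p_k$, I choose $c_u$ as follows. In the leaf-tile case (marked leaf $a$), select $c_u$ of maximal height among branch points of $T$ on the open arc $(a,p_k)$; this separates $a$ from $p_k$ and places $p_k$ in a free child, which I label~$3$. In the arc-tile case (marked leaves $a^-,b^+$), note that $p_k$ is a leaf of $T$ distinct from the branch points $a,b$, so $p_k\notin[a,b]$, and the median $d$ of $a,b,p_k$ is a branch point lying in $(a,b)$; choose $c_u=d$, whereby $p_k$ falls into the third branch of $d$, labeled~$3$. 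Making the analogous choices for $S_u$ places $q_k$ in the same labeled child, preserving the invariant.

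With these decompositions in hand, Lemmas~\ref{lem:condtrue} and~\ref{lem:condtrue2} go through with minor modifications, supplying conditions~(i), (ii), \eqref{eq:iff1}, and \eqref{eq:iff2} of Proposition~\ref{nested}. The main obstacle is condition~(iii)---that $\max_{u\in W_n}\diam(T_u)\to 0$---specifically for the $p_k$-containing chain $\alpha_n^{(k)}$, since our custom rule restricts the pool of candidate branch points. I would argue this in two stages, in the spirit of Lemma~\ref{lem:diamto0}. First, the chosen branch points $c_n$ on the shrinking arcs toward $p_k$ must accumulate at $p_k$: otherwise any accumulation point $c^*\ne p_k$ would admit a branch point of positive fixed height on $(c^*,p_k)$ that would eventually dominate $H_T(c_n)$, which tends to~$0$ by Lemma~\ref{finite}. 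Second, $\bigcap_n \alpha_n^{(k)}=\{p_k\}$: any other $q$ in the intersection would be separated from $p_k$ by $c_n$ for all large~$n$, forcing $q\notin\alpha_{n+1}^{(k)}$. Combined with nestedness and compactness, this yields $\diam(\alpha_n^{(k)})\to 0$. Proposition~\ref{nested} then produces a homeomorphism $f\colon T\to S$ with $f(T_u)=S_u$ for all~$u$, and the invariant $p_k\in T_u \Leftrightarrow q_k\in S_u$ at each level gives $f(p_k)=q_k$.
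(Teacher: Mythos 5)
Your proposal is correct in outline but differs from the paper's proof in a structurally significant way. Where you agree with the paper: the opening move is identical---the median $c$ of $p_1,p_2,p_3$ is found by traveling along $[p_1,p_2]$ until first meeting $[p_2,p_3]$, the three half-open arcs $[p_k,c)$ are pairwise disjoint and nonempty (since each $p_k$ is a leaf of $T$), so $c$ is a branch point (Lemma~\ref{lem:tripcrit}) with $p_k\in T_k$ after relabeling. Where you diverge: the paper does \emph{not} give tiles containing a distinguished leaf a bespoke branch-point rule. Instead it simply declares $p_1,p_2,p_3$ (and $q_1,q_2,q_3$) to be \emph{additional signed marked leaves} with $\sgn(p_1)=-$, $\sgn(p_2)=\sgn(p_3)=+$, and then runs the original decomposition machinery from Section~\ref{s:proof} completely unchanged. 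Every tile containing $p_k$ is thereby an arc-tile, the branch point is always chosen of maximal height on the arc between the two marked leaves, and the key point is that Lemmas~\ref{lem:condtrue}, \ref{lem:diamto0}, \ref{lem:condtrue2} then hold with essentially identical proofs---no new shrinking argument is needed. The conclusion $f(p_k)=q_k$ then falls out because signs dictate which child inherits each marked leaf: the $-$ sign sends $p_1$ to child $1$ at every stage, giving $\{p_1\}=T_1\cap T_{11}\cap T_{111}\cap\dots$ and similarly $\{p_2\}=T_2\cap T_{22}\cap\dots$, $\{p_3\}=T_3\cap T_{32}\cap T_{322}\cap\dots$.

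Your alternative (keeping $p_k$ unmarked, treating $p_k$-tiles as leaf-tiles, and restricting the branch-point search to $(a,p_k)$, placing $p_k$ in child $3$) does work, but it costs you exactly what you identify: you must separately prove that the $p_k$-chain shrinks, because the restriction to $(a,p_k)$ invalidates the comparison ``$H_T(c_n)\ge H_T(x)$'' for interior branch points $x$ off that arc that underlies the paper's Lemma~\ref{lem:diamto0}. Your two-stage argument ($c_n\to p_k$ via the height/density contradiction, then $\bigcap_n\alpha_n^{(k)}=\{p_k\}$) is sound but the second step is compressed: to show that a putative second point $q$ in the intersection is eventually separated from $p_k$ by $c_n$, one should introduce the first point $x$ of $[c,p_k]$ hit when traveling from $q$ along $[q,p_k]$, note $x\neq p_k$ (else $p_k\in[q,c]$ contradicts $p_k$ being a leaf), and observe that once $c_n$ passes $x$ on $[c,p_k]$ one has $c_n\in(x,p_k)\subset[q,p_k]$, contradicting Lemma~\ref{lem:vartree}~(iii). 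Two further remarks: (1) your arc-tile subcase for distinguished leaves is vacuous---after level~$1$, the $p_k$-tile always has $\partial T_u=\{c_u\}$ by \eqref{eq:bound}, so it is always a leaf-tile; (2) for completeness you should note that non-$p_k$ chains escape the distinguished leaves after finitely many steps, after which the paper's original Lemma~\ref{lem:diamto0} argument applies verbatim. The paper's marking device is worth internalizing because it turns a special case into a special initialization, avoiding these edge cases altogether.
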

 
 Note that $-1,1\in \Te$ are leaves of $\Te$ as follows from 
 Lemma~\ref{lem:arc1}~(ii). Moreover, $i\in \Te$ is also a leaf of $\Te$, because the set 
 $$\Te\setminus\{i\}=\Te_1\cup\Te_2\cup(\Te_3\setminus\{i\})=
 \Te_1\cup\Te_2\cup g_3(\Te\setminus\{1\})$$
 is connected. Hence  $\Te$, and so by Theorem~\ref{t:infinite} every tree in $\T_3$, has at least three  leaves (actually infinitely many).
 If we apply Theorem~\ref{t:norm}  to $S=\Te$, then we see that 
  if $T$ is a tree in $\T_3$ with three distinct leaves $p_1$, $p_2$, $p_3$, then there exists a homeomorphism $f\: T\ra \Te$ such that 
  $f(p_1)=-1$, $f(p_2)=1$, and $f(p_3)=i$.

  \begin{proofof}{\em Proof of Theorem~\ref{t:norm}.} 
We will employ a slight modification of our decomposition and coding procedure. The underlying alphabet corresponds to the case $m=3$, 
and so $\mathcal{A}=\{1,2, 3\}$. We describe this  for the tree $T$. Essentially, one wants to use the leaves $p_1$, $p_2$, $p_3$ of $T$ as additional  marked leaves for any of the inductively defined subtrees $T_u$ for  $n\in \N$ and $u\in W_n=W_n(\mathcal{A})$ if it   contains any of these leaves. Here $p_1$   carries the sign $-$, while $p_2$ and $p_3$ carry the sign $+$. 

Instead of starting the decomposition process  with   a branch point $c\in T$ of maximal height, one chooses a branch point $c$ so that 
the leaves $p_1$, $p_2$, $p_3$ lie in distinct branches 
$T_1$, $T_2$, $T_3$ of $c$ in $T$, respectively. To find such a branch point, one  
 travels from $p_1$ along  $[p_1,p_2]$ until one first 
 meets $[p_2,p_3]$ in a point $c$. Then the sets $[p_1, c)$, 
 $[p_2, c)$, $[p_3, c)$ are pairwise disjoint. For $k,\ell\in  \mathcal{A}$ with  $k\ne \ell$ the set $[p_k, c)\cup \{c\}\cup (c,p_\ell]$ is an arc with endpoints $p_k$ and $p_\ell$, and so it must agree with  $[p_k, p_\ell]$. In particular, 
 $c\in [p_k, p_\ell]$.  Since each  point $p_k$ is a leaf, it easily follows from Lemma~\ref{lem:vartree}~(iii) that $c\ne p_1, p_2, p_3$. Indeed, if $c=p_1$ for example, then $c=p_1\in [p_2, p_3]$ and so 
 $p_2$ and $p_3$ would lie in different components of $T\setminus \{p_1\}$. This is impossible, because $p_1$ is a leaf of $T$ and so 
 $T\setminus \{p_1\}$ is connected.   
 
 We conclude that 
 the connected sets  $[p_1, c)$, 
 $[p_2, c)$, $[p_3, c)$ are non-empty and must lie in different branches $T_1$, $T_2$, $T_3$ of $c$. In particular, $c$ is a branch point of $T$. We can choose the labels so that $p_k\in T_k$ for $k=1,2,3$. 
 The point $c$ is a marked leaf in each of theses branches with a  sign chosen  as before. With the additional signs for the distinguished leaves, we then have the set of marked leaves
 $\{p_1^-, c^+\}$ in $T_1$, $\{c^{-}, p_2^+\}$ in $T_2$, and 
 $\{c^-, p_3^+\}$ in $T_3$. 
 
 We now continue inductively 
 as before. If we  have already constructed a subtree 
 $T_u$ for some $n\in \N$ and $u\in W_n$ with one or two signed marked leaves, then we   decompose $T_u$ into three branches labeled 
$T_{u1}$, $T_{u2}$, $T_{u3}$ by using a suitable branch point $c\in T_u$.  Namely, if   $T_u$ is a leaf-tile and has one marked leaf  $a\in T_u$, we choose a branch point $c\in T_u\setminus \{a\}$ with maximal height $H_T(c)$. If 
$T_u$ is an arc-tile with two marked leaves $\{a,b\}\sub T_u$ we choose a branch point $c\in T_u$ of maximal height on $(a,b)\sub T_u$. 

Marked leaves and their signs are assigned to the children 
$T_{u1}$, $T_{u2}$, $T_{u3}$ of $T_u$ as before. In particular, 
 a marked leaf $x^-$ of  $T_u$ is passed to  $T_{u1}$ with the same sign. Similarly, a marked leaf $x^+$ of $T_u$
 is passed to $T_{u2}$ with the same sign. If we continue in this manner, we obtain subtrees $T_u$ with one or two signed marked leaves for all levels $n\in \N$ and $u\in W_n$.

We  apply the same procedure for the tree $S$ and its leaves 
$q_1$, $q_2$, $q_3$. Then Lemma~\ref{lem:condtrue}, Lemma~\ref{lem:diamto0}, and  Lemma~\ref{lem:condtrue2} are true (with almost identical proofs) for the decompositions of $T$ 
and $S$ obtained in this way. 
 The argument  in the proof of Theorem~\ref{t:infinite}   based on 
Proposition~\ref{nested} now guarantees the existence of a  homeomorphism 
$f\: T\ra S$ such that 
\begin{equation}\label{eq:ttot}
f(T_u)=S_u \text  { for all $n\in \N$ and $u\in W_n$.}  
\end{equation}

In our construction $p_1\in T_1$ carries the sign $-$ and is 
hence passed to $T_{11}$ with the same sign; so $p_1\in T_{11}$. Repeating this argument, we see the 
$$p_1\in T_1\cap T_{11}\cap T_{111}\cap \dots. $$ 
The latter nested intersection of compact sets  cannot contain
 more than one point, because by  Lemma~\ref{lem:condtrue2} the diameters of our subtrees $T_u$,
 $u\in W_n$,  approach $0$ uniformly as $n\to \infty$.
  Thus,  $ \{p_1\}= T_1\cap T_{11}\cap T_{111}\cap \dots\,.$ 
 The same argument shows that 
$\{q_1\}=S_1\cap S_{11}\cap S_{111}\cap  \dots\, ,$ 
 and so 
 \eqref{eq:ttot} implies that $f(p_1)=q_1$.

 Similarly, the points $p_2$, $p_3$, $q_2$, $q_3$ carry the sign $+$ in their respective trees. This leads to 
\begin{align*}  \{p_2\}&=T_2\cap T_{22}\cap T_{222}\cap \dots, \quad 
  \{q_2\}=S_2\cap S_{22}\cap S_{222}\cap \dots,  \\
   \{p_3\}&=T_3\cap T_{32}\cap T_{322}\cap \dots,  \quad 
  \{q_3\}=S_3\cap S_{32}\cap S_{322}\cap \dots,  
 \end{align*} 
which by \eqref{eq:ttot}  gives  $f(p_2)=q_2$ and $f(p_3)=q_3$.

 We have shown the existence of a homeomorphism $f\:T\ra S$ with the desired normalization. \qed   
\end{proofof}

\begin{acknowledgement} The authors would like to thank Daniel Meyer for many valuable comments on this paper. The first author was partially supported by NSF grants DMS-1506099 and  DMS-1808856.
\end{acknowledgement}
%


%
%

%
%

\end{document}